\documentclass[10pt]{amsart}

\usepackage{palatino, mathpazo}
\usepackage[mathscr]{eucal}
\usepackage{amsmath,amsthm,amsfonts,amssymb,mathrsfs,textcomp,yfonts}
\usepackage[all]{xy}
\setlength{\parindent}{20pt}

\usepackage{hyperref}

\newtheorem{theorem}{Theorem}[section]
\newtheorem{lemma}{Lemma}[section]

\theoremstyle{remark}
\newtheorem{remark}[theorem]{Remark}

\newtheorem{definition}{Definition}[section]
\numberwithin{equation}{section}

\newcommand{\I}{\mathscr{I}}
\newcommand{\X}{\mathfrak{X}}
\newcommand{\la}{\langle}
\newcommand{\laa}{\langle\!\langle}
\newcommand{\ra}{\rangle}
\newcommand{\raa}{\rangle\!\rangle}

\title[Extensions of multiply twisted pluri-canonical forms]
{Extensions of multiply twisted\\ pluri-canonical forms}

\author{Chen-Yu Chi}
\address{C.-Y. Chi: Department of Mathematics, Harvard University,
Cambridge}
\email{cychi@math.harvard.edu}

\author{Chin-Lung Wang}
\address{C.-L. Wang: Department of Mathematics, National Taiwan
University, Taipei}
\email{dragon@math.ntu.edu.tw}

\author{Sz-Sheng Wang}
\address{S.-S. Wang: Department of Mathematics, National Taiwan
University, Taipei}
\email{d98221004@math.ntu.edu.tw}

\begin{document}
\everymath{\displaystyle}

\maketitle

\section{Introduction}
In this work we study the problem of extending ``multiply twisted"
pluri-canonical forms from smooth divisors in a complex projective
manifold. We first state the main theorem and then review some
earlier results. Definitions and notation can be found in Section
\ref{defcon}.

\begin{theorem}\label{mainthm}
Let $X$ be a projective manifold of dimension $n$, $D\subset X$ a
smooth divisor with canonical section $s_D$.

Let $h_D$ be an almost semipositive metric (cf.\ \ref{semip}) on
the line bundle $D$ such that $|s_D|_{h_D}$ is essentially bounded
on $X$, i.e.~bounded by a fixed number almost everywhere, and let
$(L_1,h_1),\dots,(L_m,h_m)$ be semipositive line bundles (cf.\
\ref{semip}) such that the restriction of the singular metric
$h_j$ to $L_j|_D$ is well defined, i.e.~not identically $+\infty$
along $D$.

If there is a real number $\mu > 0$ such that
$$
\mu \sqrt{-1} \Theta_{h_j}\geqslant \sqrt{-1} \Theta_{h_D}
$$
as currents on $X$ for $j=1,\dots,m$, then for every section $\sigma$ of
$$
\bigotimes_{j = 1}^m (K_D + L_j|_D)\otimes \I_1\I_2\cdots\I_m
$$
on $D$, where $\I_j$ denote the multiplier ideal sheaves $\I(
h_j|_D)$, there exists a global section $\widetilde{\sigma}$ of
$$
\bigotimes_{j = 1}^m (K_X +D + L_j) = m(K_X+D) + L_1 + \cdots + L_m
$$
on $X$ such that $\widetilde{\sigma}|_{D} = \sigma \wedge
(ds_D)^{\otimes m}$ (cf.\ $\ref{adj}$).
\end{theorem}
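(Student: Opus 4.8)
The plan is to deduce the statement from a single application of the Ohsawa--Takegoshi $L^{2}$ extension theorem for an adjoint line bundle; the substance of the argument is the construction of an appropriate singular Hermitian metric on the twisting part. Set
$$
F:=\bigotimes_{j=1}^{m}(K_X+D+L_j)=K_X+D+G,\qquad
G:=(m-1)(K_X+D)+L_1+\cdots+L_m .
$$
By adjunction $G|_D=(m-1)K_D+(L_1+\cdots+L_m)|_D$, so that $K_D+G|_D=\bigotimes_{j=1}^{m}(K_D+L_j|_D)$ is exactly the line bundle on which $\sigma$ lives and $u:=\sigma\wedge(ds_D)^{\otimes m}$ is a holomorphic section of $F|_D=(K_X+D+G)|_D$. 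It therefore suffices to produce a singular metric $h_G$ on $G$ over all of $X$ such that \textup{(i)} $\sqrt{-1}\Theta_{h_G}\geqslant 0$ and $\sqrt{-1}\Theta_{h_G}\geqslant\mu^{-1}\sqrt{-1}\Theta_{h_D}$ as currents on $X$, and \textup{(ii)} $\int_D|u|^{2}_{h_G}<+\infty$, the $L^{2}$ norm being the intrinsic one attached to a section of $K_D\otimes G|_D$. Granting \textup{(i)}--\textup{(ii)}, one applies Ohsawa--Takegoshi to the adjoint bundle $K_X+D+G$: the essential boundedness of $|s_D|_{h_D}$ controls the divisor factor $\mathcal{O}_X(D)$, a rescaling of $h_D$ by a positive constant accommodates the numerical constant in the statement, and the almost semipositive $h_D$ is treated by approximation with smooth metrics of curvature $\geqslant-\varepsilon\omega$ followed by a weak limit of the resulting extensions (the $L^{2}$ bounds being uniform in $\varepsilon$). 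This yields $\widetilde{\sigma}\in H^{0}(X,F)$ with $\widetilde{\sigma}|_D=u=\sigma\wedge(ds_D)^{\otimes m}$.

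To build $h_G$ one should feed in $\sigma$ itself, through the identity $G|_D=\tfrac{m-1}{m}F|_D+\tfrac1m(L_1+\cdots+L_m)|_D$ of $\mathbb{Q}$-line bundles on $D$. Over $D$ the ``model'' metric $|u|^{-2(m-1)/m}\otimes(h_1\cdots h_m)^{1/m}$ on $G|_D$ has curvature current $\geqslant\tfrac1m\sum_{j}\sqrt{-1}\Theta_{h_j}|_D\geqslant\mu^{-1}\sqrt{-1}\Theta_{h_D}|_D$, and it makes $u$ square-integrable along $D$: the hypothesis $\sigma\in\I_1\cdots\I_m$ forces $\int_D\big(|\sigma|^{2}e^{-\varphi_1-\cdots-\varphi_m}\big)^{1/m}<+\infty$, since writing $\sigma$ locally as $\sum_{\alpha}c_\alpha\prod_{j}t_{j,\alpha}$ with $t_{j,\alpha}\in\I_j$ and invoking $\prod_{j}b_j^{1/m}\leqslant\tfrac1m\sum_{j}b_j$ bounds the integrand by a finite sum of the locally integrable functions $|t_{j,\alpha}|^{2}e^{-\varphi_j}$ (here $\varphi_j$ is a local weight of $h_j|_D$, and the $\I_j=\I(h_j|_D)$ are defined thanks to the hypothesis that the $h_j|_D$ are well defined). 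So a metric with properties \textup{(i)}--\textup{(ii)} is already available \emph{on $D$}; the remaining --- and principal --- task is to realise it, or a metric comparable to it near $D$, on the whole of $X$ without spoiling \textup{(i)}. This is exactly where the problem exceeds the case $m=1$, for which $h_G=h_1$ works and the theorem is just Ohsawa--Takegoshi, because no metric on $K_X$ is available a priori.

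I would carry this out by an iteration of Siu--Takayama type. Fix a sufficiently positive auxiliary line bundle $A_0$ with enough sections and build an increasing tower of sections over $X$: at stage $p$ a section of $pF\otimes A_0$ (more precisely of $pF$ twisted by $A_0$ and lower-order corrections) that restricts along $D$, up to fixed auxiliary data, to $u^{\otimes p}$, each stage obtained from the previous one by an Ohsawa--Takegoshi extension whose twisting metric is assembled from the metric defined by the stage-$(p-1)$ section (hence of semipositive curvature) together with $h_1\cdots h_m$. The hypothesis $\mu\sqrt{-1}\Theta_{h_j}\geqslant\sqrt{-1}\Theta_{h_D}$ is precisely what ensures that $h_1\cdots h_m$ injects, at \emph{every} stage, the same fixed amount of strict positivity over $\sqrt{-1}\Theta_{h_D}$, so the $L^{2}$ estimates remain uniform in $p$; renormalising by $1/p$, passing to the limit, and removing the contribution of $A_0$ by a final limiting argument then produces the metric $h_G$ on $G$ of \textup{(i)}, while \textup{(ii)} is secured by the computation above because the tower is built around the powers of $u$. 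The main obstacle, and the point requiring the most care, is to propagate control of the multiplier ideals along $D$ throughout the tower --- i.e.\ to ensure that the iterated extensions do not acquire spurious common zeros on $D$ and that the renormalised limiting metric stays, near $D$, no more singular than the model metric along the locus where $u$ vanishes --- which, as in Siu's method, is handled by carrying the relevant ideal sheaves on $D$ along at each step.
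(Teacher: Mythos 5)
Your overall strategy coincides with the paper's: reduce to constructing a semipositive metric on $G=(m-1)(K_X+D)+\sum_j L_j$ and apply Ohsawa--Takegoshi once; define that metric by $h_G = h_\infty^{(m-1)/m}(h_1\cdots h_m)^{1/m}$ with $h_\infty$ a semipositive metric on $m(K_X+D)+\sum_j L_j$ satisfying $|\sigma\wedge ds_D^{\otimes m}|_{h_\infty}\leqslant 1$; build $h_\infty$ by a Siu--P\u{a}un tower with an auxiliary ample bundle, renormalise, and take an upper envelope; reduce the singular $h_D$ case to the smooth case. Your local AM--GM argument for $\int_D\langle\sigma\rangle^{2/m}_{h_1\cdots h_m}<\infty$ is a clean and correct alternative to the paper's H\"older-for-pseudonorms estimate (the latter is phrased in terms of the global generators it needs anyway for the tower).

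The genuine gap is at the point you yourself single out as ``the main obstacle.'' You write that propagating the multiplier ideals through the tower is handled ``as in Siu's method, by carrying the relevant ideal sheaves on $D$ along at each step,'' but in the Siu and P\u{a}un arguments (and in Demailly's Theorem \ref{dema2}) there is at most one nontrivial $\I(h_j|_D)$; the others are required to be $\mathscr{O}_D$. The entire point of the present theorem is that all $m$ ideals $\I_j$ may be nontrivial, and no known off-the-shelf device carries them through the tower. The paper's resolution is a specific choice of the auxiliary ample $A$: one needs $(A_2)$, global generation of each $(K_D+L_j|_D+A|_D)\otimes\I_j$, so that the intermediate restriction data $s_{1,j_1}\otimes\cdots\otimes s_{r,j_r}$ come from a fixed finite set of sections with the correct $\I_j$-membership; and, decisively, $(A_3)$, surjectivity of the multiplication map $\bigotimes_j H^0\bigl(D,(K_D+L_j|_D+A|_D)\otimes\I_j\bigr)\to H^0\bigl(D,(mK_D+\sum L_j|_D+mA|_D)\otimes\I_1\cdots\I_m\bigr)$ (proved in Appendix 2), which is what lets $\sigma\otimes t^{(0)}_l$ be rewritten as $\sum_p\tau_{l;1,p}\otimes\cdots\otimes\tau_{l;m,p}$ and reinjected into the tower with a uniform $L^2$ bound every time $k$ crosses a multiple of $m$. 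Without $(A_2)$ and $(A_3)$ the inductive statement $(E)_k$ cannot even be formulated, so as written your sketch does not close; identifying and proving $(A_3)$ (Lemma \ref{s.lem}) is the missing idea.
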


Extension theorems of this type (for $m = 1$) date back to the
work of Ohsawa and Takegoshi \cite{ot87} on extending holomorphic
functions from submanifolds of Stein manifolds with weighted $L^2$
estimates. Their key idea is to use a modified Bochner--Kodaira
inequality to achieve the $L^2$ estimate for a skewed
$\overline\partial$ operator. This theorem was generalized by
Manivel \cite{man} to the case of holomorphic sections of vector
bundles. Variants of their theorems were used by Angehrn and Siu
\cite{a-s}, in their study of Fujita's conjecture, to prove the
semicontinuity of multiplier ideal sheaves under variation of the
singular metrics, and used by Siu \cite{siu1, siu2}, in his proof
of the invariance of plurigenera, to extend pluricanonical forms
from the central fiber of a smooth projective family of complex
manifolds to the total space.

The argument exploited in \cite{siu2} was generally referred to as
a ``two tower" argument by Siu. Indeed, in \cite{siu2}, the
theorem of Ohsawa-Takegoshi type ($m  = 1$) is for the canonical
bundle twisted by a suitable line bundle. In passing from a single
canonical bundle to pluricanonical bundles, Siu combined the
extension theorem with Skoda's theorem on (effective) ideal
generation as well as a supremum norm estimate. Later
P$\breve{\mathrm a}$un \cite{pau} simplified Siu's approach by
showing that the supremum norm condition can be replaced by an
$L^2$ one and the invariance of plurigenera can be deduced
directly from the extension result without using Skoda's theorem.
More precisely, he proved the following result:

\begin{theorem} [P$\breve{\mathrm a}$un \cite{pau}] \label{pau}
Let $\pi : \X \rightarrow \Delta$ be a projective family over the
unit disk and $(L,h)$ a semipositive line bundle on $\X$ such that
the restriction $h|_{\X_0}$ is well defined. Then every section of
$(mK_{\X_0} + L|_{\X_0}) \otimes \mathscr{I}(h|_{\X_0})$ on $\X_0$
extends to a section of $mK_\X+L$.
\end{theorem}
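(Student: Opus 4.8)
The plan is to obtain Theorem~\ref{pau} as a special case of Theorem~\ref{mainthm}, applied with $X=\X$ and $D=\X_0$ and a convenient choice of twisting bundles. Everything hinges on the observation that $\X_0=\pi^{*}(0)$ as a divisor, so that $\mathcal{O}_{\X}(\X_0)=\pi^{*}\mathcal{O}_{\Delta}(\{0\})$ is holomorphically trivial (the divisor $\{0\}$ being principal on the disk), with canonical section $s_{\X_0}$ corresponding to $\pi^{*}t$ under a trivialization, $t$ the coordinate on $\Delta$. Equipping $\mathcal{O}_{\X}(\X_0)$ with the flat metric $h_{\X_0}$, one has $\sqrt{-1}\Theta_{h_{\X_0}}=0$, which is (almost) semipositive, and $|s_{\X_0}|_{h_{\X_0}}=|\pi^{*}t|<1$ on $\X=\pi^{-1}(\Delta)$, so the hypotheses on $(D,h_D)$ in Theorem~\ref{mainthm} are met for free.

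Next, choose $L_1=L$ with $h_1=h$, set $L_2=\cdots=L_m=\mathcal{O}_{\X}$ with the trivial metric, and take $\mu=1$. Since every $h_j$ is semipositive one has $\mu\sqrt{-1}\Theta_{h_j}\geqslant 0=\sqrt{-1}\Theta_{h_{\X_0}}$, so the curvature comparison is automatic; $h_1|_{\X_0}=h|_{\X_0}$ is well defined by assumption and the trivial metrics restrict trivially. A short bookkeeping gives, on the source side,
$$
\bigotimes_{j=1}^{m}\bigl(K_{\X_0}+L_j|_{\X_0}\bigr)=mK_{\X_0}+L|_{\X_0},\qquad \I_1\I_2\cdots\I_m=\I(h|_{\X_0})
$$
(the trivial metric having trivial multiplier ideal), so that a section $\sigma$ as in Theorem~\ref{pau} is precisely an admissible input for Theorem~\ref{mainthm}; and on the target side
$$
\bigotimes_{j=1}^{m}\bigl(K_{\X}+\X_0+L_j\bigr)=m(K_{\X}+\X_0)+L\cong mK_{\X}+L,
$$
again by triviality of $\mathcal{O}_{\X}(\X_0)$, while the conclusion $\widetilde\sigma|_{\X_0}=\sigma\wedge(ds_{\X_0})^{\otimes m}$ of Theorem~\ref{mainthm} says exactly, under the adjunction identification, that $\widetilde\sigma$ extends $\sigma$.

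The genuine obstacle is that $\X=\pi^{-1}(\Delta)$ is not projective, whereas Theorem~\ref{mainthm} is stated for a projective manifold. I see two ways to bridge this. First, Theorem~\ref{pau} is local over $\Delta$ near $0$ — shrinking $\Delta$ to $\Delta'\Subset\Delta$ costs nothing since $\Delta'\cong\Delta$ — and the projectivity of $X$ enters the proof of Theorem~\ref{mainthm} only through the existence of a K\"ahler metric and a plurisubharmonic exhaustion, both of which $\X$ possesses (it is weakly pseudoconvex); so that proof should go through verbatim on $\X$. Alternatively, one embeds $\X\subset\mathbb{P}^{N}\times\Delta$ by the relative projective structure, takes the closure in $\mathbb{P}^{N}\times\mathbb{P}^{1}$, and resolves singularities isomorphically near the compact fiber $\X_0$, producing a projective manifold $Z$ that coincides with the family near $\X_0$ and for which $\mathcal{O}_{Z}(\X_0)\cong\bar\pi^{*}\mathcal{O}_{\mathbb{P}^{1}}(1)$ is semipositive (with $\bar\pi\colon Z\to\mathbb{P}^{1}$ the induced morphism); one then reruns the argument above with $L_2=\cdots=L_m=\mathcal{O}_{Z}(\X_0)$, so that the curvature comparison still holds with $\mu=1$ and the restriction $\mathcal{O}_{Z}(\X_0)|_{\X_0}$ is trivial ($\bar\pi$ being constant on $\X_0$), the sole extra point being to extend $L$ and $h$ across $Z\setminus\X$. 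I expect this reconciliation of the relative-over-$\Delta$ statement with the global projective hypothesis to be the only delicate step; all of the $L^{2}$ analysis is already packaged inside Theorem~\ref{mainthm}.
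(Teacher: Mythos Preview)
The paper does not give its own proof of Theorem~\ref{pau}; it is quoted as P\u{a}un's result and serves as motivation for Theorem~\ref{mainthm}. So there is no in-paper argument to compare against, and the relevant question is whether your reduction to Theorem~\ref{mainthm} is sound.

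Your bookkeeping is correct: with $D=\X_0$ the line bundle $\mathcal{O}_\X(\X_0)$ is trivial, the flat metric has zero curvature so the inequality $\mu\sqrt{-1}\Theta_{h_j}\geqslant\sqrt{-1}\Theta_{h_D}$ is automatic, and the choice $L_1=L$, $L_2=\cdots=L_m=\mathcal{O}_\X$ recovers exactly the source and target in P\u{a}un's statement. The paper itself makes this observation informally (``the projective family case is relatively easier in that the line bundle associated to the central fiber is trivial, hence \dots\ the curvature inequality in Theorem~\ref{mainthm} holds automatically''). You have also correctly located the one genuine obstruction: $\X$ is not projective.

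Of your two proposed fixes, the first is the honest one, but note that it is not an \emph{application} of Theorem~\ref{mainthm}; it is a reproof. Projectivity is used in the argument not merely for a K\"ahler form and an exhaustion but for the auxiliary ample bundle $A$ satisfying $(A_1)$--$(A_4)$, in particular Serre vanishing for $(A_4)$ and Lemma~\ref{s.lem} for $(A_3)$. Over a disk one replaces $A$ by a $\pi$-ample bundle and these steps go through fiberwise; this is precisely P\u{a}un's original setting. Your second fix has a real gap: after compactifying to a projective $Z$ you must extend both $L$ and the \emph{singular semipositive} metric $h$ from $\X$ to $Z$, and there is no general mechanism for doing so while preserving $\sqrt{-1}\Theta_h\geqslant 0$. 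This is not a detail one can wave away.
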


His proof consists of an elegant single tower climbing induction
argument. The induction is on the multiple of the canonical bundle
twisted by the {\it fixed} line bundle $L$ equipped with a {\it
fixed} singular metric $h$. It is then natural to ask, when
climbing the tower, can we add different line bundles each with
its own singular metric instead of just a constant pair $(L,h)$.
If this can be achieved, one may possibly obtain an extension
theorem of ``multiply twisted" pluricanonical forms. In fact,
Demailly proved the following result:

\begin{theorem} [Demailly \cite{dema2}] \label{dema2}
Let $\X$ and $\pi$ be as in Theorem $\ref{pau}$ and
$(L_j,h_j)$ $(1 \leqslant j \leqslant m)$ semipositive line
bundles on $\X$ such that $h_j |_{\X_0}$ are well defined.
Suppose $\mathscr{I}(h_j|_{\X_0})=\mathscr{O}_{\X_0}$ for
$j=2,\dots,m$. Then every section of
$(mK_{\X_0}+L_1|_{\X_0}+\cdots+L_m|_{\X_0})
\otimes\mathscr{I}(h_1|_{\X_0})$ on $\X_0$ extends to a
section of  $(mK_\X+L_1+\cdots+L_m)$.
\end{theorem}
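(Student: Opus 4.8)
The plan is to run the single-tower argument of P\u{a}un (the proof of Theorem~\ref{pau}), the one new feature being that the single fixed twisting pair $(L,h)$ used at every level of the tower is replaced by the $m$ semipositive pairs $(L_1,h_1),\dots,(L_m,h_m)$, one for each unit of $K_{\X}$ that one climbs; this is exactly why Demailly's formulation is the natural one, since the partial sums $L_1+\dots+L_k$ can be introduced one at a time, whereas in the one-bundle statement one must artificially split $L$ into $m$ fractional pieces. I begin with the standard reductions: after shrinking $\Delta$, $D:=\X_0$ is a smooth fibre with $\mathscr{O}_{\X}(D)\cong\mathscr{O}_{\X}$, so adjunction gives $K_{\X}|_D=K_D$ and $(mK_{\X}+\sum_j L_j)|_D=mK_D+\sum_j L_j|_D$; in particular $\mathscr{O}_{\X}(D)$ carries a flat metric, and it is this flatness that makes a hypothesis of the form ``$\mu\sqrt{-1}\Theta_{h_j}\geqslant\sqrt{-1}\Theta_{h_D}$'' unnecessary in the relative setting. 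Fix an auxiliary very ample line bundle $A$ on $\X$, divisible by $m$ and positive enough that every bundle occurring below is very ample and every restriction map $H^0(\X,\cdot)\to H^0(D,\cdot)$ used below is surjective, the latter by Nadel/Serre vanishing together with $\mathscr{I}(h_1)|_D=\mathscr{I}(h_1|_D)$ and $\mathscr{I}(h_j)|_D=\mathscr{O}_D$ for $j\geqslant2$ (valid after shrinking, as $D$ may be taken general).

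The core is to manufacture a singular metric $h$ on the total-space bundle $mK_{\X}+L_1+\dots+L_m$ with $\sqrt{-1}\Theta_h\geqslant0$ and with $h|_D$ dominating $\log|u|^2$; granting this, one application of the Ohsawa--Takegoshi theorem (the case $m=1$, cf.\ \cite{ot87,man}) to $K_{\X}+(D+F)$ with $F=(m-1)(K_{\X}+D)+\sum_j L_j$ --- metrizing $F$ by $\tfrac{m-1}{m}(\text{weight of }h)+\tfrac1m\sum_j(\text{weight of }h_j)$ plus the flat metric on $(m-1)D$, whose curvature $\tfrac{m-1}{m}\sqrt{-1}\Theta_h+\tfrac1m\sum_j\sqrt{-1}\Theta_{h_j}$ is $\geqslant0$ exactly because $\mathscr{O}_{\X}(D)$ is flat --- extends $u$ to the required section. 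The weighted $L^2$ condition Ohsawa--Takegoshi imposes on $u$ over $D$ reduces to the finiteness of $\int_D|u|^{2/m}\,e^{-\frac1m\sum_j\varphi_j}$, where $\varphi_j$ is the local weight of $h_j$ and the exponent $1/m$ on $|u|^2$ is produced by the coefficient $\tfrac{m-1}{m}$ acting on $h|_D\gtrsim\log|u|^2$; this is finite by H\"older's inequality as soon as $\int_D|u|^2e^{-\varphi_1}<\infty$ --- which is the hypothesis $u\in(\,\cdots)\otimes\mathscr{I}(h_1|_D)$ --- and $e^{-\varphi_j}\in L^1_{\mathrm{loc}}$ for $j\geqslant2$ --- which is $\mathscr{I}(h_j|_D)=\mathscr{O}_D$. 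This is the one and only place the triviality of the higher multiplier ideals enters, so the proof stays as close to P\u{a}un's as one could hope.

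The metric $h$ is obtained by climbing the tower. One starts from extensions of $u\otimes\tau$ ($\tau$ in a basis of $H^0(D,A|_D)$) to $mK_{\X}+\sum_j L_j+A$, forms the semipositive metric $\Phi_0:=\log\sum_\tau|\,\widehat{u\otimes\tau}\,|^2$ on that bundle (which dominates $\log|u|^2$ on $D$ because $A|_D$ is globally generated), and then inductively, at level $k$, applies Ohsawa--Takegoshi with twist $(m-1)(K_{\X}+D)+\sum_j L_j+(1-\tfrac{k}{m})A$ metrized by a suitable weighted combination of $\Phi_{k-1}$, the $h_j$, the flat metric on $D$ and the curvature of $A$, so as to extend $u\otimes\tau'$ for all $\tau'$ in a basis of $H^0(D,(1-\tfrac km)A|_D)$ and build $\Phi_k$. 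The step I expect to be the genuine obstacle is keeping the curvature of these twisting metrics $\geqslant0$ while the auxiliary $A$ is consumed: reweighting $\Phi_{k-1}$ introduces a deficit of the form $-(\text{a multiple of }\sqrt{-1}\Theta_A)$ that must be absorbed by positivity accumulated in $\Phi_{k-1}$, and at the last unit of $A$ there is formally none to spare --- so one must either arrange, after P\u{a}un, that the $\Phi_k$ carry enough positivity (building them from full linear systems of the now very ample bundles and exploiting the uniform $L^2$ bounds the extension theorem supplies), or pass to a limit over $A,\,A/m,\,A/m^2,\dots$, with semipositivity and the estimate on $D$ surviving the limit. Carrying out this bookkeeping --- precisely where Siu used sup-norm estimates and P\u{a}un replaced them by $L^2$ ones --- is the technical heart of the argument; everything else is the routine adjunction, vanishing and H\"older input indicated above.
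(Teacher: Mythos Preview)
Your overall architecture is right and matches the paper's (which in fact proves the stronger Theorem~\ref{mainthm}, of which Demailly's statement is the special case where $\mathscr O_\X(D)$ is trivial): reduce to producing a semipositive metric $h_\infty$ on $mK_\X+\sum_jL_j$ with $|\sigma|_{h_\infty}\leqslant 1$ on $\X_0$, then apply Ohsawa--Takegoshi once with the twist metrized by $h_0=h_\infty^{(m-1)/m}(h_1\cdots h_m)^{1/m}$, and verify the $L^2$ condition by H\"older. Under Demailly's extra hypothesis $\I(h_j|_{\X_0})=\mathscr O_{\X_0}$ for $j\geqslant 2$, your direct H\"older splitting $(|u|^2e^{-\varphi_1})^{1/m}\prod_{j\geqslant 2}(e^{-\varphi_j})^{1/m}$ really does suffice; the paper's more elaborate product decomposition via its condition~$(A_3)$ is needed only for the general case where every $\I_j$ may be nontrivial.

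The gap is in your tower. You set up a \emph{finite} tower of height $m$ in which $A$ is consumed in steps of $A/m$, building $\Phi_k$ on $mK_\X+\sum_jL_j+(1-\tfrac{k}{m})A$; and you correctly compute that the step from $\Phi_{k-1}$ to the required twist forces a coefficient $\tfrac{m-1}{m}$ in front of $\Phi_{k-1}$ and leaves a residual $\tfrac{1-k}{m^2}A$, i.e.\ a negative multiple of $\sqrt{-1}\Theta_A$ that nothing can absorb (the $\Phi_k$, being section metrics, have curvature $\geqslant 0$ but no better lower bound). Neither of your proposed rescues---``accumulated positivity in $\Phi_{k-1}$'' or a limit over $A,A/m,A/m^2,\dots$---is what P\u{a}un does, and the first one cannot work for the reason just stated.

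The actual tower (Lemma~\ref{ind} and Lemma~\ref{hinfty}) is \emph{infinite} and never incurs a curvature deficit. One sets $F_k=k(K_\X+D)+L^{(k)}+mA$ for $k\geqslant m$, with $L^{(k)}$ cycling through the $L_j$'s, and at each step extends sections whose restriction to $\X_0$ is $\sigma^{\lfloor k/m\rfloor}\otimes(\text{auxiliary factors})$. Since $F_k=K_\X+D+F_{k-1}+L_{r^*}$, the Ohsawa--Takegoshi twist is $F_{k-1}+L_{r^*}$, metrized by $h_{S_{k-1}}\otimes h_{r^*}$---both manifestly semipositive, so there is nothing to absorb. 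The uniform $L^2$ bound this yields (your ``$L^2$ bounds'' remark is on target) feeds, via Jensen and the sub-mean-value inequality, into a uniform upper bound for $\tfrac1q\log\sum_l|\widetilde f^{(qm)}_l|^2$; taking $\big(\limsup_{q\to\infty}\big)^*$ produces $h_\infty$, and the fixed $mA$ disappears because of the $1/q$ prefactor. In short: one does not consume $A$; one lets powers of $\sigma$ grow and then takes roots.
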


Note that, although Theorem \ref{dema2} enables one to add
different line bundles $L_j$, only one of them is allowed to be
equipped with a singular metric whose multiplier ideal sheaf is
nontrivial. This motivates us to look at the statement like
Theorem \ref{mainthm}, which removes this restriction. This was
recently achieved in \cite{s.wang}.

\begin{theorem} [\cite{s.wang}] \label{s.wang}
Let $\pi : \X \rightarrow \Delta$ be a projective family over the
unit disk and $(L_j,h_j)$ $(1 \leqslant j \leqslant m)$
semipositive line bundles on $\X$ such that $h_j|_{\X_0}$ are well
defined. Then every section of
$(mK_{\X_0}+L_1|_{\X_0}+\cdots+L_m|_{\X_0})\otimes \I_1\I_2 \cdots
\I_m$ on $\X_0$ extends to a section of $(mK_\X+L_1+\cdots+L_m)$
on $\X$, where $\I_j$ is the multiplier ideal sheave $\I(
h_j|_{\X_0})$ on $\X_0$.
\end{theorem}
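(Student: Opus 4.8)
The plan is first to observe that Theorem~\ref{s.wang} is the relative incarnation of Theorem~\ref{mainthm}, and then to describe a self-contained route --- essentially the argument of \cite{s.wang}. For the reduction, apply Theorem~\ref{mainthm} with $X=\X$ and $D=\X_0$: the central fibre is a smooth divisor in the manifold $\X$, the line bundle $\mathcal{O}_\X(\X_0)$ is the pullback $\pi^{*}\mathcal{O}_\Delta(0)$ of a line bundle on the Stein base $\Delta$ and hence admits a smooth metric $h_D$ of zero curvature with $|s_D|_{h_D}$ bounded on $\X$; thus $h_D$ is (almost) semipositive, $|s_D|_{h_D}$ is essentially bounded, and $\mu\sqrt{-1}\,\Theta_{h_j}\geqslant 0=\sqrt{-1}\,\Theta_{h_D}$ for every $\mu>0$ because each $(L_j,h_j)$ is semipositive. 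Since $\mathcal{O}_\X(\X_0)$ is trivial and $(K_\X+\X_0)|_{\X_0}=K_{\X_0}$ by adjunction, the section $\widetilde\sigma$ of $m(K_\X+\X_0)+L_1+\cdots+L_m$ produced by Theorem~\ref{mainthm} is, under the adjunction isomorphism, the extension of $\sigma$ asserted here. So the content sits entirely in Theorem~\ref{mainthm}; the remainder indicates the direct argument.

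Directly, I would use the Siu--P$\breve{\mathrm a}$un single-tower induction with recursively defined metrics, arranged so that the tower introduces the twisting bundles $L_1,\dots,L_m$ cyclically. Fix a very ample line bundle $A$ on $\X$ together with a smooth, strictly positively curved metric, taken sufficiently positive to start the induction below and to keep a twisted Ohsawa--Takegoshi extension theorem applicable on $\X$ for line bundles of the shape $(K_\X+\X_0)+(\text{semipositively curved})+A$. For $p\geqslant 0$, writing $p=qm+r$ with $0\leqslant r<m$, take
$$
G_p \;=\; p\,(K_\X+\X_0)\;+\;q\,(L_1+\cdots+L_m)\;+\;(L_1+\cdots+L_r)\;+\;A ,
$$
so $G_0=A$, the step from $G_p$ to $G_{p+1}$ adds exactly one copy of $K_\X+\X_0$ and the single bundle $L_{(r\bmod m)+1}$, and $G_{qm}$ is $q$ copies of the target $m(K_\X+\X_0)+L_1+\cdots+L_m$ plus the one fixed $A$.

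The induction establishes, level by level, that every section of $G_p|_{\X_0}$ satisfying the natural $L^2$ bound extends to $G_p$. Level $0$ follows from the twisted Ohsawa--Takegoshi theorem applied with the ample twist $A-(K_\X+\X_0)$. Granting the statement at level $p$, one extends a generating set of $H^0(\X_0,G_p|_{\X_0})$; these extensions induce a singular metric on $G_p$ of semipositive curvature (by Cauchy--Schwarz) whose restriction to $\X_0$ is the metric defined by the original sections --- a metric which, where those sections have common zeros, records the vanishing of the data already carried up the tower, in particular surplus powers of $\sigma$. Tensoring this $G_p$-metric with $h_{\ell}$ on the newly added $L_{\ell}$ ($\ell=(r\bmod m)+1$) yields a semipositive metric on the twisting bundle $G_{p+1}-(K_\X+\X_0)$, and the twisted Ohsawa--Takegoshi theorem then extends any section of $G_{p+1}|_{\X_0}$ whose square is integrable against the resulting weight. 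Running this to level $qm$ for each $q$, feeding in $\sigma^{\otimes q}$ times a fixed section of $A|_{\X_0}$, and then letting $q\to\infty$, one extracts the extension $\widetilde\sigma$ of $\sigma$ by the usual normalization, in which the contributions of the fixed $A$ and of the exponent $1/q$ wash out, exactly as in \cite{siu2,pau,dema2}.

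The crux is the $L^2$ bookkeeping along the tower. At the step introducing $L_{\ell}$, integrability of the section fed in against $e^{-\varphi_{\ell}}$ (where $h_{\ell}=e^{-\varphi_{\ell}}$) is required; the recursively built background metric absorbs all but one power of $\sigma$, so that what actually has to be verified is, in effect, $\int_{\X_0}|\sigma|^2e^{-\varphi_{\ell}}<\infty$, i.e. membership in $\I_{\ell}$, and, the tower cycling through all of $L_1,\dots,L_m$, these requirements are precisely what is supplied by the hypothesis $\sigma\in\bigotimes_{j}(K_{\X_0}+L_j|_{\X_0})\otimes\I_1\cdots\I_m$. This distribution of the multiplier-ideal data over the cyclic tower is what is genuinely new relative to Demailly's Theorem~\ref{dema2}, where all but one of the $\I_j$ are trivial and no such distribution arises. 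Keeping these estimates mutually consistent and uniform in $q$ (so that the $q\to\infty$ limit exists), ensuring that the recursively defined metrics are honestly defined on all of $\X$, and checking that the twisted Ohsawa--Takegoshi theorem indeed applies over the non-compact but $\Delta$-proper total space $\X$: that is the main technical burden. Once it is in place, the curvature signs and the limiting argument follow the now-standard pattern of \cite{ot87,siu2,pau,dema2}.
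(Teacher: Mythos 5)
Your reduction of Theorem \ref{s.wang} to Theorem \ref{mainthm} is exactly the point of view the paper takes (the introduction says the family case is ``relatively easier'' because the curvature inequality holds automatically), and the verification that $(\mathcal{O}_\X(\X_0),h_D)$ can be taken trivial with $|s_D|_{h_D}=|t|$ bounded is correct. However, you should flag that Theorem \ref{mainthm} is stated for a \emph{compact} projective manifold $X$, while $\X$ is only projective \emph{over} $\Delta$ and is not compact; to make the reduction literal one must rerun the argument with a $\pi$-ample $A$, relative Serre vanishing, and a final restriction to $\pi^{-1}(\Delta_\varepsilon)$, which is standard but is a genuine step the proposal glosses over.

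The more substantive issue is in your direct sketch of the tower. You say that what has to be verified at the step introducing $L_\ell$ is ``in effect $\int_{\X_0}|\sigma|^2 e^{-\varphi_\ell}<\infty$, i.e.\ membership in $\I_\ell$.'' That would only use $\sigma\in\I_1\cap\cdots\cap\I_m$, and it is not what the paper's argument needs. The actual mechanism, and the genuinely new input of the paper relative to Demailly's Theorem \ref{dema2}, is the \emph{product} structure $\sigma\in\I_1\cdots\I_m$ exploited through conditions $(A_2)$ and $(A_3)$ together with Lemma \ref{s.lem}: one chooses $A$ so ample that $(K_D+L_j|_D+A|_D)\otimes\I_j$ is globally generated by sections $\{s_{j,l}\}$ for each $j$, and so that the multiplication map $\bigotimes_j H^0\big(D,(K_D+L_j|_D+A|_D)\otimes\I_j\big)\to H^0\big(D,(mK_D+\sum L_j|_D+mA|_D)\otimes\I_1\cdots\I_m\big)$ is onto. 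It is this surjectivity that lets one write $\sigma\otimes t_l^{(0)}$ as $\sum_p\tau_{l;1,p}\otimes\cdots\otimes\tau_{l;m,p}$ with $\tau_{l;j,p}$ a section of $(K_D+L_j|_D+A|_D)\otimes\I_j$; this decomposition is indispensable both in the finiteness estimate of \ref{amp} (via H\"older) and, crucially, in Case $2$ of Lemma \ref{ind}, where the boundedness of $|\sigma\otimes t_l^{(0)}|^2/\sum_J|\hat s_J^{(m)}|^2$ at the wrap-around step $k\equiv 0\ (\mathrm{mod}\ m)$ is exactly Cauchy--Schwarz applied to this decomposition. In the intermediate steps (Case $1$) the per-step integrability comes from the generators $s_{r,j_0}$ being in $\I_r$, not from any integrability of $\sigma$ itself. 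Without identifying this product-ideal mechanism, the $L^2$ bookkeeping you defer to as ``the main technical burden'' is not just burdensome but actually breaks: the intersection-ideal condition your sketch implicitly relies on is strictly weaker than the hypothesis and does not yield the needed pure-tensor decomposition. Your use of a single $A$ in $G_p$ rather than the paper's $mA$ in $F_k$ is a minor stylistic difference and would work with adjusted bookkeeping.
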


Inspired by the results of Tsuji, Takayama, and Hacon-McKernan
respectively in connection with their work on pluricanonical
series \cite{tsu}, \cite{tak}, and \cite{hk}, we proved our
theorem under the setting of pairs of a complex projective
manifold and a smooth divisor whose associated line bundle
satisfies some conditions on curvature. The projective family case
is relatively easier in that the line bundle associated to the
central fiber is trivial, hence it can be ignored in the necessary
curvature condition, i.e.~the curvature inequality in Theorem
\ref{mainthm} holds automatically.

Most of our arguments in the proof of Theorem \ref{mainthm} follow
closely P$\breve{\mathrm{a}}$un's one tower argument. The major new
input to overcome the non-triviality of multiplier ideal sheaves
$\mathscr{I}( h_j |_D)$, which occurs during the intermediate
inductive steps, is a more careful choice of the auxiliary twisting
ample line bundle (denoted by $A$ in our argument). This bundle
needs to be sufficiently ample to take care of both the required
metric properties and the global generation for related coherent
sheaves. The complete discussion is presented in Section
\ref{main.steps} and Section \ref{h_infty}.

For completeness and self-containedness of this article, we include
in Appendix 1 (Section \ref{app}) a proof of the Ohsawa--Takegoshi
type theorem which we will use. The proof is exactly the same as
the proof in \cite{siu2}, except that we deal with the situation in
which the line bundle $D$ is not trivial. 
A similar statement appeared in \cite{va}, Theorem 2. 
It is worth noting that
Friedrichs and H\"ormander's results (\cite{fr} and \cite{hor1}) on
the density in the graph norm (cf.\  Remark \ref{dens gr}) plays an
essential role when using the Bochner--Kodaira formula to get a
priori estimates. This density result requires the weight functions
to be smooth or to have at most suitably mild singularities. 
Therefore, to allow $h_D$ to be
a singular metric, one has to reduce the proof to the case when it
is smooth. 
We discuss
such a reduction in detail for completeness, although it might be
well known to experts. In addition, Theorem \ref{mainthm} is a
refinement of \cite{va}, Theorem 1.

In fact we only dealt with the case $h_D$ being smooth in our first
version submitted on October 2010 since we were still struggling on
this subtle regularization issue at that time. We developed our
treatment in Appendix 1 following ideas of Siu which we learnt from
several of his lectures and private notes. We consider a locally
biholomorphic projection from a Stein manifold to a Euclidean space
and apply the convolution method on the target Euclidean space.

We also noticed that in a recent preprint by Demailly, Hacon, and
P$\breve{\mathrm{a}}$un \cite{dhp}, an extension theorem similar to
Theorem \ref{otsa u} has been proven. They also gave a detailed
discussion on the process of smoothing singular metrics. Their
approach is basically as follows. First one imbeds a Stein manifold
$V$ (which will be the complement of some suitable sufficiently
ample divisor $H$ in the projective manifold $X$ under
consideration) in an ambient $M$ (which is an Euclidean space in
their case). Then, by a theorem of Siu (Theorem 4.2 in \cite{dhp})
one can construct a Stein neighborhood $W$ of $V$ in the ambient
space $M$ which admits a holomorphic retraction $r:W\to V$. To
smoothen plurisubharmonic functions on $V$, one first pulls them
back to $W$ via $r$, which are still plurisubharmonic. After
applying the usual convolution method in the Euclidean space $M$ to
regularize the pulled back functions, one takes their restrictions
on $V$.

These two methods are different. Although both methods crucially
use the Stein property and convolution, the difference lies in
that the approach in \cite{dhp} is ``injective" and ours is
``projective".

We are able to extend Theorem \ref{mainthm} to allow $L_j$'s to be
${\bf R}$ divisors instead of genuine line bundles. We are
grateful to the referee for asking this question. Since the proof
requires some other techniques, we will present it in a separate
work. \\

{\bf Acknowledgements.} This collaboration arose from discussions
during the seminar series ``Analytic Approach to Algebraic
Geometry'' in December 2008 and March 2010 at National Taiwan
University sponsored by the National Center for Theoretic Sciences
and Taida Institute of Mathematical Sciences. Two major references
we studied are Siu's Harvard lecture notes on ``Complex geometry''
and the excellent online book ``Complex analytic and differential
geometry'' written by Demailly. We are grateful to both authors
for their inspiring writings and generous sharing. Also we would
like to thank the referee for pointing out a gap in an earlier
version of the proof of Lemma \ref{peslem}, which led us to
formulate the almost semipositivity condition for $h_D$ in our
Theorem \ref{mainthm}.

C.-L.\ would like to express his sincere gratitude to Professor Eckart Viehweg for providing crucial help during his early stage of mathematical career. 

\section{{Preliminaries and Conventions}}\label{defcon}
\subsection{{Adjunction}}\label{adj}
Given a smooth divisor $D$ in a compact complex manifold $X$, we
use the same letter $D$ to denote the line bundle associated to
$D$. In order to justify the restriction of sections of adjoint
line bundles on $X$ to get sections of adjoint line bundles on
$D$, we need to take a closer look at the adjunction formula
$K_D\simeq (K_X+D)|_D$. Locally $D$ is given by a set of equations
$\{s_\alpha=0\}$ with respect to an open cover $\{U_\alpha\}$. The
relations $s_\alpha=g_{\alpha\beta}s_\beta$ on $U_\alpha\cap
U_\beta$ give a $1$-cocycle $\{g_{\alpha\beta}\}$ of the sheave
$\mathscr O^*_X$ which defines the line bundle $D$, and
tautologically the locally defined functions $s_\alpha$'s give a
canonical section, denoted by $s_D$, which is unique up to scaling
and will be fixed throughout all arguments. The short exact
sequence
$$
0\rightarrow N_{D/X}^*\rightarrow
T_X^*|_D\rightarrow T_D^*\rightarrow 0
$$
implies a canonical isomorphism
by taking wedge product:
$$
K_D+ N_{D/X}^*=K_X|_D.
$$
(We adopt the additive notation for tensor products of line bundles.)

On the other hand, $ds_\alpha$ is a local frame of $N_{D/X}^*$ on
$U_\alpha$. Let $e_{\alpha}$ be a local frame of $D$ on $U_\alpha$
for all $\alpha$. The relation $s_\alpha=g_{\alpha\beta}s_\beta$
and $e_\beta=g_{\alpha\beta}e_\alpha$ implies that
$\{ds_\alpha\otimes e_\alpha\}$ defines a global frame, denoted by
$ds_D$, of the line bundle $N_{D/X}^*+ D|_D$, and hence
$N_{D/X}^*+ D|_D$ is trivial. This induced the isomorphism
$$K_D\simeq K_D+ N_{D/X}^*+ D|_D=K_X|_D+ D|_D$$ by sending $\eta$ to
$\eta\wedge ds_D$.

\subsection{{Singular metrics and pseudonorms}}\label{sm}

The term ``singular hermitian metric" or ``singular metric" always
means a hermitian metric whose local weight functions are locally
Lebesgue integrable, and hence smooth metrics are counted as
singular metrics. For such metrics $h$ we use $\Theta_h$ to denote
their curvature currents. Locally we have $h = e^{-\varphi}$ with
$\Theta_{h} = -\partial\bar\partial \log e^{-\varphi} =
\partial\bar\partial \varphi$.

Let $X$ be a complex manifold of dimension $n$ and $L$ a line
bundle on $X$ with a singular metric $h$. Let $s$ be a (Lebesgue)
measurable section of $mK_X+L$. Suppose $s$ and $h$ are
represented by functions $f(z)$ and $h(z)$ in terms of local
coordinates $z=(z^1,\dots,z^n)$, $z^j=x^j+{\sqrt{-1}}y^j$, of
trivializing charts of $L$.

\begin{definition}\label{2/m}
We define a measurable $(n,n)$-form $\la s\ra_{h}^{\frac{2}{m}}$ by
setting
$$
\la s\ra_{h}^{\frac{2}{m}}=h(z)^\frac{1}{m}|f(z)|^\frac{2}{m}dx^1\wedge
dy^1\wedge\cdots\wedge dx^n\wedge dy^n
$$
locally. $\la
s\ra_{h}^{\frac{2}{m}}$ is clearly well defined and is nonnegative with
respect to the canonical orientation on $X$ associated to $dx^1\wedge
dy^1\wedge\cdots\wedge dx^n\wedge dy^n$. Therefore we define
$$
\laa s\raa_{h}=\int_X\la s\ra_{h}^{\frac{2}{m}}\leqslant\infty.
$$
This number is called the pseudonorm of $s$ with respect to $h$.
\end{definition}

Suppose $g$ is a smooth hermitian metric on $T_X$ with K\"ahler
form $\omega$. $g$ induces a hermitian metric on the canonical
bundle $K_X$, denoted as $g_\omega$. Let
$dV_\omega=\frac{\omega^n}{n!}$ be the volume form on $X$ induced
by $g$. It is easily seen that
$$
\la s\ra_{h}^{\frac{2}{m}}=|s|_{g_\omega^{\otimes m}\otimes
h}^{\frac{2}{m}}dV_\omega.
$$
Using this expression one sees directly the following
facts:\smallskip

(i) Suppose $L$ and $L'$ are two line bundles with singular
metrics $h$ and $h'$ respectively. For any measurable sections $s$
of $mK_X+L$ and $s'$ of $L'$, and $l\in\mathbf N$ we have
\begin{equation}\label{||<>}
\la s\otimes s'\ra_{h\otimes
h'}^{\frac{2}{m}}=|s'|_{h'}^{\frac{2}{m}}\la
s\ra_{h}^{\frac{2}{m}}
\end{equation}
and
\begin{equation}\label{homo}
\la s^l\ra^{\frac{2}{lm}}_{ h^{\otimes l}}=\la s\ra^{\frac{2}{m}}_{h}.
\end{equation}

(ii) If $s_j$ is a measurable section of $m_jK_X+L_j$ and $h_j$ is
a singular metric on $L_j$, $j=1,\dots,r$, then we can deduce from
the usual H\"older inequality the ``H\"older inequality for
pseudonorms":
\begin{equation}\label{Holder}
\laa s_1\otimes\cdots\otimes s_r\raa_{ h_1\otimes\cdots\otimes
h_r}^{m_1+\cdots+m_r}\leqslant\laa s_1\raa_{
h_1}^{m_1}\cdots\laa s_r\raa_{ h_r}^{m_r}.
\end{equation}

\subsection
{{Almost semipositive line bundles and pseudoeffective divisors}}
\label{semip} A {\it semipositive} line bundle (resp.\ an almost
semipositive line bundle) is a pair $(L, h)$ of a line bundle $L$
and a singular hermitian metric $h$ on $L$ such that
${\sqrt{-1}}\Theta_{h}$ is a {\it closed positive current} in the
sense of Lelong (resp.\ the sum of a closed positive current and a
smooth $(1,1)$-form), or equivalently, each of its local weights
is a nontrivial plurisubharmonic function, i.e.~not identically
$-\infty$ (resp.\ the sum of a nontrivial plurisubharmonic
function and a smooth function). We will call such $h$ a
semipositive metric (resp. an almost semipositive metric) on $L$.
The multiplier ideal sheaf associated to an almost semipositive
singular metric $h$ is the coherent sheaf of local $L^2_h$
sections and is denoted by $\I_{h}$ or by $\I(h)$.

\begin{remark} \label{almost semip}
On a projective manifold $X$, a pair $(L,h)$ is almost
semipositive if and only if there exist a semipositive line bundle
$(L_1,h_1)$ and a line bundle with smooth hermitian metric
$(L_2,h_2)$ such that $L=L_1\otimes L_2$ and $h=h_1\otimes h_2$.
\end{remark}

A typical type of semipositive line bundles consists of effective
line bundles by the following construction.

\begin{definition}\label{semipo}
Let $S=\{s_1,\dots,s_l\}$ be a set of nontrivial global
holomorphic sections of a line bundle $L$. For any $\sigma\in L_x$
where $x\in X$, we choose an arbitrary smooth metric $h$ on $L$
and define
$$|\sigma |_{{h}_S}^2:=\frac{|\sigma |_h^2}{\sum\limits_{j=1}^l
|s_j(x)|_h^2}.$$
\end{definition}
If $s$ is a section of $K_X+L$ and $S=\{s_1,\dots,s_l\}$ a set of
global holomorphic section of $L$, then for any smooth metric $h$
on $L$ we have
\begin{equation}\label{pdo}
\la s\ra^2_{ h_S}=\frac{\la s\ra_h^2}{\sum\limits_{j=1}^l |s_j|_h^2}.
\end{equation}
It is clear that the definition does not depend on the choice of
$h$. Locally if the sections $\{s_j\}$ are represented by
functions $\{f_j\}$ then the weight function is
$$
\varphi:=\log\Big(\sum_{j=1}^l|f_j|^2\Big)
$$
which is plurisubharmonic, and hence $\sqrt{-1}\Theta_{ h_S} =
{\sqrt{-1}} \partial\overline\partial \log (\Sigma_j|f_j|^2)
\geqslant 0$.

Denote by $\mathrm{Psef}(X) \subseteq N^1(X)_{\bf R}$ the closure
of the real convex cone generated by numerical classes of
semipositive line bundles over $X$. In the algebraic case, we have
the following interpretation.

\begin{remark} (cf.\ \cite{dema})
If $X$ is projective then $\mathrm{Psef}(X) =
\overline{\mathrm{Eff}(X)} = \overline{\mathrm{Big}(X)}$, where
$\overline{\mathrm{Eff}(X)}$ (resp.\ $\overline{\mathrm{Big}(X)}$)
is the closure of effective (resp.\ big) cone of $X$, which is
also known as the cone of pseudoeffective divisors.
\end{remark}

\section{The main extension result} \label{main.steps}
\subsection{An extension theorem for adjoint line bundles}
We will need the following extension theorem of Ohsawa--Takegoshi
type for adjoint line bundles, whose proof will be given in
Appendix 1.

\begin{theorem} \label{otsa u}
Let $X$ be a projective manifold, $D \subseteq X$ a smooth
divisor. Suppose $h_D$ an almost semipositive metric on the line
bundle $D$ such that $|s_D|_{h_D}$ is essentially bounded on $X$
and $(L,  h)$ be a semipositive line bundle on $X$. If there is a
real number $\mu > 0$ such that
$$
\mu\sqrt{-1}\Theta_{h}\geqslant \sqrt{-1} \Theta_{h_D}
$$
as currents on $X$, then for every section $s$ of $(K_D +
L|_D)\otimes\I(h|_D)$ there exists a section $\widetilde{s}$ of
$K_X + D + L$ such that $\widetilde{s}|_D = s\wedge ds_D$ and
$$
\int_X \la\widetilde{s}\ra_{h_D \otimes  h}^2 \leqslant C \int_D \la
s\ra_{h}^2
$$
where $C>0$ only depends on ${{\rm ess.}\sup}_X |s_D|_{h_D}$ and $\mu$.
\end{theorem}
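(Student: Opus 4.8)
\emph{Overall strategy.} The plan is to prove Theorem \ref{otsa u} by the classical Ohsawa--Takegoshi scheme in the form used by Siu: realise the extension as $\widetilde{s}=\chi F-u$, where $F$ is a \emph{smooth} extension of $s\wedge ds_D$ to a tubular neighbourhood of $D$, $\chi$ is a cut-off equal to $1$ near $D$ and supported in a slightly larger neighbourhood, and $u$ solves $\overline\partial u=\overline\partial(\chi F)$ with respect to a metric that is singular along $D$, so that any finite-norm solution automatically satisfies $u|_D=0$. Then $\widetilde{s}=\chi F-u$ is a holomorphic section of $K_X+D+L$ with $\widetilde{s}|_D=s\wedge ds_D$, and the bound on $u$ yields the stated $L^2$ estimate. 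Because $D$ is a genuinely nontrivial line bundle, I would first reduce to the case in which $h_D$ is smooth: using Remark \ref{almost semip} write $h_D=h_{D,1}\otimes h_{D,2}$ with $h_{D,1}$ semipositive and $h_{D,2}$ smooth, and regularise $h_{D,1}$ by convolution along a locally biholomorphic projection of a Stein open subset of $X$ onto a Euclidean space (the content of Appendix~1), keeping $|s_D|_{h_D}$ essentially bounded and keeping $\mu\sqrt{-1}\Theta_h\geqslant\sqrt{-1}\Theta_{h_D}$ valid up to an error removed in a final limit. Fix a K\"ahler form $\omega$ on $X$.

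\emph{The $\overline\partial$-equation.} With $h_D$ smooth, $g:=\overline\partial(\chi F)$ is a smooth $(n,1)$-form valued in the twisting bundle $D+L$, and it vanishes along $D$ because $F|_D$ is holomorphic; the target integral $\int_D\la s\ra^2_h$ is finite precisely because $s$ is a section of $(K_D+L|_D)\otimes\I(h|_D)$. I would solve $\overline\partial u=g$ using a family of singular metrics on $D+L$ obtained from $h_D\otimes h$ by a logarithmic modification along $D$ --- of the shape $h_D\otimes h\otimes e^{-\psi_\varepsilon}$ with $\psi_\varepsilon$ a suitable concave function of $\log(|s_D|^2_{h_D}+\varepsilon^2)$ --- engineered so that its singularity along $D$ is strong enough to force a finite-norm solution to vanish on $D$, while the curvature term in the modified Bochner--Kodaira inequality below stays nonnegative once $\mu\sqrt{-1}\Theta_h\geqslant\sqrt{-1}\Theta_{h_D}$ is invoked; a small twist $\delta$ times an ample bundle is added for strict positivity and removed at the end.

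\emph{The a priori estimate.} The core is the Ohsawa--Takegoshi modified Bochner--Kodaira--Nakano inequality: for $(n,1)$-forms $v$ valued in $D+L$ and positive auxiliary functions $\eta,\lambda$ of $t=\log(|s_D|^2_{h_D}+\varepsilon^2)$,
$$
\bigl\|\sqrt{\eta+\lambda}\,\overline\partial^{\,*}v\bigr\|^{2}+\bigl\|\sqrt{\eta}\,\overline\partial v\bigr\|^{2}\;\geqslant\;\Bigl\langle\bigl[\,\eta\,\sqrt{-1}\,\partial\overline\partial\Phi-\sqrt{-1}\,\partial\overline\partial\eta-\lambda^{-1}\sqrt{-1}\,\partial\eta\wedge\overline\partial\eta\,\bigr]\Lambda_\omega v,\ v\Bigr\rangle,
$$
where $\Phi$ denotes the total (singular) weight above, including the $\delta$-twist. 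Using $\sqrt{-1}\,\partial\overline\partial t\equiv-\sqrt{-1}\Theta_{h_D}$ away from $D$, the terms $-\sqrt{-1}\,\partial\overline\partial\eta$ and $-\lambda^{-1}\sqrt{-1}\,\partial\eta\wedge\overline\partial\eta$ contribute, besides a favourable multiple of $\sqrt{-1}\,\partial t\wedge\overline\partial t$, a multiple of $\sqrt{-1}\Theta_{h_D}$, and this is exactly where $\mu\sqrt{-1}\Theta_h\geqslant\sqrt{-1}\Theta_{h_D}$ is used to keep the bracket $\geqslant0$ (indeed bounded below by a positive multiple of $\omega$, thanks to the $\delta$-twist). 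Choosing $\eta,\lambda$ as in Siu's presentation and applying the $L^2$-existence theorem for the skewed $\overline\partial$-operator, I obtain a solution of $\overline\partial u_{\varepsilon,\delta}=g$ with $\int_X\la u_{\varepsilon,\delta}\ra^2_{h_D\otimes h}\leqslant C\int_D\la s\ra^2_h$, where $C$ depends only on $\mu$ and on $\mathrm{ess.}\sup_X|s_D|_{h_D}$ (which bounds $t$ from above and hence controls the admissible $\eta,\lambda$). Letting $\delta\to0$ then $\varepsilon\to0$, extracting weak limits, checking $\overline\partial u=g$ and $u|_D=0$, setting $\widetilde{s}=\chi F-u$, and undoing the reduction to smooth $h_D$ completes the proof.

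\emph{The main obstacle.} The delicate step is the sign of the bracket in the twisted inequality. In the original theorem $D$ is trivial and $\sqrt{-1}\Theta_{h_D}=0$, so no such term appears; here there is a genuine curvature contribution from $D$ to be absorbed, and the hypothesis with the factor $\mu$ is tailored precisely to make this possible while keeping $C$ dependent only on $\mu$ and on $\mathrm{ess.}\sup_X|s_D|_{h_D}$. The secondary obstacle --- flagged by the authors themselves --- is the reduction to smooth $h_D$: the Friedrichs/H\"ormander density-in-the-graph-norm result underlying the Bochner--Kodaira integration by parts tolerates only mildly singular weights, so one must smooth $h_D$ while simultaneously preserving both the essential bound on $|s_D|_{h_D}$ and the domination $\mu\sqrt{-1}\Theta_h\geqslant\sqrt{-1}\Theta_{h_D}$.
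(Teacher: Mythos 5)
Your proposal follows essentially the same route as the paper's Appendix 1: reduce to the Stein complement $Y=X\setminus V$ of a sufficiently ample hypersurface (via $L^2$ Riemann extension), regularise weights by convolution along a locally biholomorphic projection $\pi:Y\to\mathbf{C}^n$, and prove the extension on relatively compact pseudoconvex domains with smooth boundary via the twisted Bochner--Kodaira estimate with $\eta=\log\bigl(A/(r^2+\varepsilon^2)^c\bigr)$, $\gamma=2c^2/(r^2+\varepsilon^2)$, $\psi=\kappa+r^2/(2\mu M^2)$, where the hypothesis $\mu\sqrt{-1}\Theta_h\geqslant\sqrt{-1}\Theta_{h_D}$ is invoked exactly to absorb the $\sqrt{-1}\partial\overline\partial\varphi_D$-term coming from $\sqrt{-1}\partial\overline\partial\log|s_D|^2_{h_D}$. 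Minor points where your sketch diverges from (but is not in conflict with) the paper: no ample $\delta$-twist is introduced there; the vanishing of the correction term on $D$ is obtained not through an extra metric singular along $D$ but by solving the skewed equation $T\beta_\varepsilon=\alpha_\varepsilon$ and inserting the explicit factor $s_D$ into $\widetilde S_\varepsilon=\varrho_\varepsilon\widetilde s_0-\sqrt{\eta+\gamma}\,(s_D\otimes\beta_\varepsilon)$; and in the smoothing step the paper convolves \emph{both} $\kappa$ and $\varphi_D$ (so that the plurisubharmonic difference $\mu\kappa-\varphi_D=\psi$ is preserved exactly under convolution and the curvature inequality holds on the nose), which your phrase ``regularise $h_{D,1}$ \dots\ up to an error removed in a final limit'' leaves under-specified.
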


Note that the statement of Theorem \ref{mainthm} for $m = 1$ is
exactly the statement of Theorem \ref{otsa u}. Hence we fix from
now on a positive integer $m \ge 2$ and consider a non-zero
$\sigma$ as in the hypothesis of Theorem $\ref{mainthm}$.

\subsection{Reduction to constructing a semipositive
metric on $m(K_X + D)+\sum\nolimits_{1}^m L_j$} \label{amp}

Note that $m(K_X+D)+\sum\nolimits_{1}^m L_j=K_X+D+(m-1)(K_X+D)
+\sum\nolimits_{1}^m L_j$. In order
to prove Theorem \ref{mainthm} via Theorem \ref{otsa u}, we
need to create a semipositive metric $ h_0$ on
$(m-1)(K_X+D)+\sum\nolimits_{1}^m L_j$ such that
$$
\mu\sqrt{-1}\Theta_{ h_0}\geqslant\sqrt{-1}\Theta_{h_D}$$
as currents and
$$
\int_D\la\sigma\wedge ds_D^{\otimes(m-1)}\ra_{ h_0}^2<\infty.
$$
The construction of $ h_0$ goes as follows. First, we choose $A$
to be so ample that the following conditions hold: \\

$(A_1)$ For each $r=0, 1, \ldots, m-1$, the line bundle $(m-r)A$
is generated by its global sections $\{t_{l}^{(r)}\}_{1 \leqslant
l \leqslant
N}$.\\

$(A_2)$ The coherent sheaf $(K_D + L_j|_D + A|_D) \otimes \I_j$ on
$D$ is generated by its global sections $\{ s_{j, l} \}_{1
\leqslant l \leqslant N}$ for each $1 \leqslant j \leqslant m$.\\

$(A_3)$ The following map induced by $\I_1 \otimes \cdots \otimes
\I_m \to \I_1 \cdots \I_m$ is surjective:
\begin{align*}
&\bigotimes\limits_{j = 1}^m H^0\left(D, (K_D + L_j|_D + A|_D) \otimes
\I_j\right) \\
&\qquad \longrightarrow H^0\left(D, (mK_D + \sum\nolimits_1^m
L_j|_D + mA|_D) \otimes \I_1 \cdots \I_m\right).
\end{align*}
This can be achieved by Lemma $\ref{s.lem}$ in Appendix 2.\\

$(A_4)$ Every section of $\big(m (K_X +D) + \sum\nolimits_{1}^m
L_j + mA\big)|_D$ on $D$ extends to $X$. This is a consequence of
the Serre vanishing theorem.\\

Suppose that we have a semipositive metric $ h_\infty$ (which will
be constructed in Lemma \ref{hinfty} by using the auxiliary ample
bundle $A$) on $m(K_X+D)+\sum\nolimits_{1}^m L_j$ such that
$\big|\sigma\wedge ds_D^{\otimes m}\big|_{ h_\infty}\leqslant 1$.
We take $ h_0= h_\infty^{\frac{m-1}{m}}( h_1\cdots
h_m)^{\frac{1}{m}}$.
The curvature condition holds since
$$
\mu\sqrt{-1}\Theta_{ h_0}=\frac{\mu(m-1)}{m}\sqrt{-1}\Theta_{
h_\infty}+\frac{1}{m}\sum_{j=1}^m\mu\sqrt{-1}\Theta_{
h_j}\geqslant\sqrt{-1}\Theta_{h_D}
$$
by the curvature assumption in Theorem \ref{mainthm}.

The
finiteness condition also holds. To see this, first note that, by
(\ref{||<>}) and (\ref{homo}),
\begin{align*}\label{a}
&\la\sigma\wedge ds_D^{\otimes(m-1)}\ra_{h_0}^2=\la(\sigma\wedge
ds_D^{\otimes (m-1)})^{\otimes m}\ra_{h_0^{\otimes m}}^{\frac{2}{m}}\\
&\quad =\la(\sigma\wedge ds_D^{\otimes m})^{\otimes(m-1)}\otimes\sigma
\ra_{h_\infty^{\otimes(m-1)} \otimes h_1 \otimes \cdots
\otimes h_m}^{\frac{2}{m}}\\
&\qquad =\big|(\sigma\wedge ds_D^{\otimes m})^{\otimes(m-1)}
\big|_{h_\infty^{\otimes(m-1)}}
^{\frac{2}{m}}\la\sigma\ra_{h_1\otimes\cdots\otimes h_m}^{\frac{2}{m}}\\
&\quad \qquad =\left(\big|\sigma\wedge ds_D^{\otimes
m}\big|_{h_\infty}^2 \right)
^{\frac{m-1}{m}}\la\sigma\ra_{h_1\otimes\cdots\otimes
h_m}^{\frac{2}{m}} \leqslant\la\sigma\ra_{h_1\otimes\cdots\otimes
h_m}^{\frac{2}{m}}.
\end{align*}
By $(A_3)$,
$$
\sigma\otimes
t^{(0)}_l=\sum_{p=1}^{n_l}\tau_{l;1,p}\otimes\cdots\otimes\tau_{l;m,p}
$$
where $\tau_{l;j,p}$ are sections of $(K_D+L_j|_D+A|_D)\otimes\I_{
h_j|D}$ for $l=1,\dots,N$. Again, by ($\ref{||<>}$) and
($\ref{homo}$),
\begin{align*}
&\Big(\sum\limits_{l=1}^N\big|t^{(0)}_l \big|_{h_A^{\otimes
m}}^{\frac{2}{m}} \Big)\la\sigma\ra_{h_1\otimes\cdots\otimes
h_m}^{\frac{2}{m}} = \sum\limits_{l=1}^N\la\sigma\otimes
t^{(0)}_l\ra_{h_1\otimes\cdots\otimes
h_m\otimes h_A^{\otimes m}}^{\frac{2}{m}}\\
&\quad \leqslant \sum\limits_{l=1}^N\sum_{p=1}^{n_l}\la\tau_{l;1,p}
\otimes\cdots\otimes\tau_{l;m,p}
\ra_{h_1\otimes\cdots\otimes h_m\otimes h_A^{\otimes m}}^{\frac{2}{m}}
\end{align*}
where $h_A$ is a fixed smooth metric on $A$.
$$
M_0:=\min\limits_{D}\sum_l\big|t^{(0)}_l\big|_{h_A^{\otimes
m}}^{\frac{2}{m}} > 0
$$
exists since $\sum_l\big|t^{(0)}_l\big|_{h_A^{\otimes
m}}^{\frac{2}{m}}$ is a nonvanishing smooth function by $(A_1)$
and $D$ is compact. Therefore
$$
\la\sigma\ra_{h_1\otimes\cdots\otimes h_m}^{\frac{2}{m}}
\leqslant\frac{1}{M_0}\sum\limits_{l=1}^N\sum_{p=1}^{n_l} \la
\tau_{l;1,p}
\otimes\cdots\otimes\tau_{l;m,p}\ra_{h_1\otimes\cdots\otimes
h_m\otimes h_A^{\otimes m}}^{\frac{2}{m}}.
$$
By the above, $(A_1)$, and ($\ref{Holder}$),
\begin{align*}
&\int_D\la\sigma\wedge ds_D^{\otimes(m-1)}\ra_{ h_0}^2\\
&\quad
\leqslant\frac{1}{M_0}\sum\limits_{l=1}^N\sum_{p=1}^{n_l}\int_D
\la \tau_{l;1,p} \otimes \cdots \otimes
\tau_{l;m,p} \ra_{h_1 \otimes \cdots \otimes h_m \otimes
h_A^{\otimes m}}^{\frac{2}{m}}\\
&\qquad \leqslant\frac{1}{M_0} \sum \limits_{l=1}^N
\sum_{p=1}^{n_l}\bigg(\int_D \la\tau_{l;1,p}\ra_{h_1\otimes
h_A}^2\bigg)^{\frac{1}{m}} \cdots \bigg(\int_D \la\tau_{l;m,p}
\ra_{h_m\otimes h_A}^2\bigg)^{\frac{1}{m}} < \infty.
\end{align*}
 Applying Theorem
$\ref{otsa u}$ to prove Theorem $\ref{mainthm}$ is then justified
if such $h_\infty$ exists.\\

\section{Construction of the metric $h_\infty$} \label{h_infty}
\subsection{A modification of Siu and  P\u{a}un's induction}\label{aux}

Here we follow the argument in \cite{pau} and \cite{siu2}. For
every positive integer $k = qm + r$ ($q=[k/m]$ the Gauss symbol of
$k/m$ and $0\leqslant r \leqslant m-1$ the remainder), we let
$$
L^{(k)} := q \sum_{j = 1}^mL_j + L_1 + \cdots + L_r
$$
and let $F_k := k (K_X +D) + L^{(k)} + mA$ where $A$ is the ample
bundle chosen in \ref{amp}.

Were $m(K_X+D)+L^{(m)}$ known to have a family of sections which do
not vanish identically along $D$ and their restrictions to $D$ are
basically $\sigma\wedge ds_D^{\otimes(m)}$ multiplied by some
functions which do not have common zeros, we can simply take
$h_\infty$ to be the semipositive metric defined by them
(Definition $\ref{semipo}$).

However, we do not know a priori that $m(K_X+D)+L^{(m)}$ have any
nonzero sections (we are trying to produce one). Instead, for the
ample line bundle $A$ we can find a set of sections $S_k$ of
$F_k=k(K_X+D)+L^{(k)}+mA$ whose restrictions to $D$ have
properties similar to those mentioned above (Lemma $\ref{ind}$).
Then we try to obtain $h_\infty$ by ``taking the $q$-root" of the
semipositive metrics $h_{S_{qm}}$ on
$F_{qm}=q(mK_X+mD+L^{(m)})+mA$ to ``eliminate" the line bundle
factor $mA$ (Lemma $\ref{hinfty}$).

Now we let $\Lambda_r := \textstyle{\prod_1^r \{1, \ldots, N \}}$
for $r = 1, 2, \ldots, m - 1$.  For every $J=(j_1,\dots,j_r)\in
\Lambda_r$, we define
\begin{center}
$s^{(r)}_J := s_{1, j_1} \otimes \cdots \otimes\, s_{r, j_r}$
\end{center}
with the convention that $\Lambda_0=\{0\}$ $s_0^{(0)}:= 1$ for $r=
0$. We define the special index set $\Lambda_m^*$ to be
$\textstyle{\prod_1^m \{1, \ldots, N \}}$ and sections $\hat
s^{(m)}_{J}=s_{1,j_1}\otimes\cdots\otimes s_{m,j_m}$ for all
$J=(j_1,\dots, j_m)\in\Lambda_m^*$. We consider for each $k\geq m$
the
following statement:\\

$(E)_k$: There exists a family of sections
$$
S_k=\{\widetilde{\sigma}_{J,l}^{(k)}:J \in\Lambda_r, 1 \leqslant l
\leqslant N\}
$$
of $F_k$ over $X$ such that
\begin{equation}\label{sigma|D}
\widetilde{\sigma}_{J,l}^{(k)}|_D = \sigma^{\otimes[k/m]} \otimes
s^{(r)}_{J} \otimes t_{l}^{(r)}\wedge ds_D^{\otimes k}
\end{equation}
for all $J\in\Lambda_r$ and $l=1,\dots, N$, where $r=k-[k/m]m$.

\begin{lemma}\label{ind}
$(E)_k$ holds for all $k\geqslant m$. Moreover, there exists a
constant $C_0>0$ which only depends on ${{\rm ess.}\sup}_X
|s_D|_{h_D}$, $\mu$, $\sigma$, and the choices of
$\{t_{l}^{(r)}\}$ and $\{s_{j,l}\}$ in $(A_2)$ and $(A_3)$ above
such that
\begin{equation}\label{un}
\int_X\sum_{\substack{J\in\Lambda_r\\
l=1,\dots,N}}\la\widetilde\sigma_{J,l}^{(k)}\ra^2_{h_D\otimes
h_{S_{k-1}}\otimes h_{r^*}}\leqslant C_0
\end{equation}
for all $k>m$, where $r=k-[k/m]m$ and
$$
r^*:=\left\{
\begin{array}{ccc}
r&\text{ if }&r\neq 0,\\
m&\text{ if }&r=0.
\end{array}
\right.
$$
\end{lemma}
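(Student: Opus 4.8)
\textbf{Proof proposal for Lemma \ref{ind}.}
The plan is to prove $(E)_k$ by induction on $k$, using Theorem \ref{otsa u} at each step to climb one level of the tower $F_{k-1}\rightsquigarrow F_k$. For the base case $k=m$ we must produce sections of $F_m = m(K_X+D)+L^{(m)}+mA$ restricting to $\sigma^{\otimes 1}\otimes s_J^{(0)}\otimes t_l^{(0)}\wedge ds_D^{\otimes m}$; since $r=0$ here, $s_J^{(0)}=1$, so we need extensions of $\sigma\otimes t_l^{(0)}\wedge ds_D^{\otimes m}$, and this is furnished directly by condition $(A_4)$ (the Serre-vanishing statement), with $L^{(m)}=\sum_1^m L_j$ and $mA$ providing the requisite positivity. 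For the inductive step, assume $(E)_{k-1}$ holds and that $k>m$; write $k=qm+r$ and $k-1=q'm+r'$. I would apply Theorem \ref{otsa u} with the line bundle $L$ there taken to be $F_{k-1}+L_{r^*}$ (so that $K_X+D+L = F_k$, using $F_k = K_X+D+F_{k-1}+L_{r^*}$, which one checks from the definition of $L^{(k)}$), equipped with the metric $h_{S_{k-1}}\otimes h_{r^*}$, where $h_{S_{k-1}}$ is the semipositive metric of Definition \ref{semipo} attached to the family $S_{k-1}$ and $h_{r^*}$ is the given metric $h_{r^*}$ on $L_{r^*}$. Semipositivity of $h_{S_{k-1}}$ is automatic from the construction, and the curvature inequality $\mu\sqrt{-1}\Theta \geqslant \sqrt{-1}\Theta_{h_D}$ is inherited from the hypothesis $\mu\sqrt{-1}\Theta_{h_{r^*}}\geqslant\sqrt{-1}\Theta_{h_D}$ of Theorem \ref{mainthm} together with the positivity of $h_{S_{k-1}}$.

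The section to be extended at step $k$ is the restriction to $D$ of the desired $\widetilde\sigma_{J,l}^{(k)}$, divided by $ds_D$: concretely, for $J=(j_1,\dots,j_r)\in\Lambda_r$ and $1\leqslant l\leqslant N$, I would feed into Theorem \ref{otsa u} the section
$$
s \;=\; \sigma^{\otimes q}\otimes s^{(r)}_J\otimes t_l^{(r)}\wedge ds_D^{\otimes(k-1)}
$$
of $(K_D + (F_{k-1}+L_{r^*})|_D)\otimes \I(h|_D)$ on $D$, where the multiplier ideal condition is exactly what $(A_2)$ guarantees for the factor $s_{r,j_r}$ contributing $\I_r = \I(h_{r^*}|_D)$, the other factors $s_{j,\cdot}$ with $j<r$ being holomorphic sections of the remaining twists. (When $r=0$ one uses $r^*=m$ and $s^{(0)}_J=1$, peeling off a full block $\sigma\otimes\hat s^{(m)}_{?}$—here I would track carefully how the index $J\in\Lambda_m^*$ and the auxiliary section $t_l^{(0)}$ of $mA$ reassemble; this is the bookkeeping that makes $h_\infty$ work in Lemma \ref{hinfty}.) Theorem \ref{otsa u} then yields $\widetilde\sigma_{J,l}^{(k)}$ on $X$ with $\widetilde\sigma_{J,l}^{(k)}|_D = s\wedge ds_D$, which is \eqref{sigma|D}, and with the $L^2$ bound
$$
\int_X\la\widetilde\sigma_{J,l}^{(k)}\ra^2_{h_D\otimes h_{S_{k-1}}\otimes h_{r^*}}\;\leqslant\; C\int_D\la s\ra^2_{h_{S_{k-1}}\otimes h_{r^*}},
$$
with $C$ depending only on $\mathrm{ess.}\sup_X|s_D|_{h_D}$ and $\mu$. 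Summing over $J\in\Lambda_r$ and $l$ gives \eqref{un} provided the right-hand sides are bounded by a constant $C_0$ independent of $k$.

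The main obstacle, and the point requiring genuine care, is establishing that the integrals $\int_D\la s\ra^2_{h_{S_{k-1}}\otimes h_{r^*}}$ are uniformly bounded in $k$. By Definition \ref{semipo}, $\la s\ra^2_{h_{S_{k-1}}\otimes h_{r^*}}$ on $D$ is a quotient whose denominator is $\sum_{J',l'}|\widetilde\sigma_{J',l'}^{(k-1)}|^2_{(\text{smooth metric})}$ evaluated along $D$, and by \eqref{sigma|D} at level $k-1$ this denominator is, up to the global frame $ds_D^{\otimes(k-1)}$ and the nonvanishing smooth function $\sum_{l'}|t_{l'}^{(r')}|^2$ from $(A_1)$, comparable on the compact manifold $D$ to $|\sigma^{\otimes q'}\otimes s^{(r')}_{?}|^2$ summed over the relevant index set. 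The numerator $\la s\ra^2$ likewise decomposes as $|\sigma^{\otimes q}\otimes s_J^{(r)}\otimes t_l^{(r)}|^2$ against a smooth reference metric, times $\la ds_D^{\otimes(k-1)}\ra$-type factors. The ratio is therefore controlled by a fixed smooth bounded quantity times a bounded number of the basic integrals $\int_D\la\tau_{l;j,p}\ra^2_{h_j\otimes h_A}$ of the kind already shown finite in Section \ref{amp}—the one extra power of $\sigma$ in the numerator relative to the denominator is exactly absorbed by one factor of $h_j$ via the multiplier-ideal finiteness, using the Hölder inequality \eqref{Holder} and $(A_3)$ to split $\sigma\otimes t^{(0)}$ into the $\tau$-pieces, exactly as in Section \ref{amp}. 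The telescoping cancellation of the $\sigma^{\otimes q}$ against $\sigma^{\otimes q'}$ (which differ by at most one factor of $\sigma$ as $k$ increments within a block, and by none when $r\neq 0$) is what keeps $C_0$ independent of $k$; making this cancellation precise, including the $r=0\to r=1$ block transition where a fresh $\sigma$ enters, is the technical heart of the estimate and the step I expect to demand the most attention.
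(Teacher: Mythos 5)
Your inductive framework matches the paper's exactly: base case $(E)_m$ from $(A_4)$, inductive step via Theorem~\ref{otsa u} applied to $L=F_{k-1}+L_{r^*}$ with metric $h_{S_{k-1}}\otimes h_{r^*}$, and the section $s=\sigma^{\otimes[k/m]}\otimes s_J^{(r)}\otimes t_l^{(r)}\wedge ds_D^{\otimes(k-1)}$. You also correctly isolate the crux: a $k$-independent bound on $\int_D\la s\ra^2_{h_{S_{k-1}}\otimes h_{r^*}}$, with the extra copy of $\sigma$ appearing at the $r=0$ block boundary being the sticking point, and the full cancellation of $\sigma^{\otimes q}$ against $\sigma^{\otimes q'}$ when $r\neq 0$. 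This is all accurate and matches the paper's strategy.

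Where your sketch diverges --- and where I think it would not close as written --- is the mechanism for absorbing the extra $\sigma$ in the $r=0$ case. You propose to split $\sigma\otimes t_l^{(0)}$ into the $\tau_{l;j,p}$-pieces and apply H\"older ``exactly as in Section~\ref{amp}''. But the object in front of you is not $\la\sigma\ra^{2/m}_{h_1\otimes\cdots\otimes h_m}$; it is a \emph{ratio}, because $h_{S_{k-1}}$ is, by Definition~\ref{semipo}, a smooth reference metric divided by $\sum_{J',l'}|\widetilde\sigma^{(k-1)}_{J',l'}|^2$. After the partial cancellation you describe, the denominator is the sup-type quantity $\sum_{J'\in\Lambda_{m-1}}|s_{J'}^{(m-1)}|^2\cdot\sum_{l'}|t_{l'}^{(m-1)}|^2$ --- not a tensor product of the $h_j$'s --- so the H\"older decomposition from Section~\ref{amp} has nothing to act on. The paper instead multiplies both numerator and denominator by $\sum_{j}|s_{m,j}|^2_{\hat h}$: in the denominator this completes $s_{J'}^{(m-1)}\otimes s_{m,j}$ into the full $\hat s^{(m)}_J$ over $J\in\Lambda_m^*$, while in the numerator it produces a factor $\sum_j\la s_{m,j}\ra^2_{h_A\otimes h_m}$ (which integrates finitely precisely because $s_{m,j}$ lies in $\I_m$). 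The ratio $\bigl|\sigma\otimes t_l^{(0)}\bigr|^2\big/\sum_J\bigl|\hat s^{(m)}_J\bigr|^2$ is then bounded pointwise by Cauchy--Schwarz, because $(A_2)$ and $(A_3)$ ensure $\sigma\otimes t_l^{(0)}$ is a linear combination of the $\hat s^{(m)}_J$'s. So the extra $\sigma$ is handled by a Cauchy--Schwarz bound on a sup-norm ratio plus a single multiplier-ideal integral, not by H\"older and the $\tau$-decomposition. You would want to build this multiplication trick into your estimate; without it the $r=0$ step does not reduce to the Section~\ref{amp} computation in the way you indicate.
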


\begin{proof} First, $(E)_m$ holds by $(A_4)$. We proceed to prove that
$(E)_{k-1}$ implies $(E)_k$ for any $k>m$. Note that
$F_k=K_X+D+F_{k-1}+L_{r^*}$ and hence
$F_k|_D=K_D+(F_{k-1}+L_{r^*})|_D+(N_{D/X}^*+D|_D)$ by $\ref{adj}$.
We are going to apply Theorem $\ref{otsa u}$ to the situation
$L=F_{k-1}+L_{r^*}$ and $s=\sigma^{\otimes[k/m]}\otimes
s_J^{(r)}\otimes t_l^{(r)}\wedge ds_D^{\otimes (k-1)}$. We choose
the singular metric $h$ on $F_{k-1}+L_{r^*}$ to be $
h_{S_{k-1}}\otimes h_{r^*}$.

The restriction $ h_{S_{k-1}}|_D$ is well defined by
($\ref{sigma|D}$), $(A_1)$, $(A_2)$, and $(A_3)$; $ h_{r^*}|_D$ is
well defined by the hypothesis of Theorem $\ref{mainthm}$.
Therefore $h|_D$ is well defined. By $\ref{semip}$ and the
hypothesis of Theorem $\ref{mainthm}$,
$$
\mu\sqrt{-1}\Theta_{h}=\mu\sqrt{-1}\Theta_{
h_{S_{k-1}}}+\mu\sqrt{-1}\Theta_{ h_{r^*}}\geqslant\sqrt{-1}\Theta_{h_D}
$$
and the curvature condition is fulfilled.

In the following we will show that
$$
\int_D\la\sigma^{\otimes[k/m]}\otimes s_J^{(r)}\otimes t_l^{(r)}\wedge
ds_D^{\otimes (k-1)}\ra^2_{ h_{S_{k-1}}\otimes h_{r^*}}\leqslant C'
$$
for a positive number $C'$ which only depends on the choices of
$\{t_{l}^{(r)}\}$ and $\{s_{j,l}\}$ in $(A_2)$ and $(A_3)$ above.
This will imply $s$ is a section of
$\big(K_D+(F_{k-1}+L_{r^*})|_D\big)\otimes\I_{h}$ and, combined
with the pseudonorm inequality on Theorem $\ref{otsa u}$, will
yield ($\ref{un}$).
\\

{\bf Case 1}: $r\neq 0$, i.e.~$[k/m]=[(k-1)/m]$.\\

We choose smooth metrics $h_A$ on $A|_D$, $h^{(r-1)}$ on
$(r-1)K_D+L^{(r-1)}|_D$, and $h'$ on
$[k/m](mK_D+L^{(m)})+(k-1)(N_{D/X}^*+D|_D)$. We let $h:=h'\otimes
h^{(r-1)}\otimes h_A^{\otimes m}$ on $F_{k-1}|_D$. Writing
$J=(J_0',j_0)$ with $J_0'\in\Lambda_{r-1}$, by ($\ref{||<>}$),
($\ref{pdo}$), and ($\ref{sigma|D}$), we have
\begin{align*}
&\la\sigma^{\otimes[k/m]}\otimes s_J^{(r)}\otimes t_l^{(r)}\wedge
ds_D^{\otimes (k-1)}\ra^2_{ h_{S_{k-1}}\otimes h_r}\\
&\quad =\frac{ \la\sigma^{\otimes[k/m]}\otimes s_J^{(r)}\otimes
t_l^{(r)} \wedge ds_D^{\otimes (k-1)}\ra^2_{h\otimes h_r}}
{\sum\limits_{\substack{J'\in\Lambda_{r-1}\\l'=1,\dots,N}}\big|
\sigma^{\otimes[(k-1)/m]}\otimes s_{J'}^{(r-1)}\otimes
t_{l'}^{(r-1)}\wedge ds_D^{\otimes
(k-1)}\big|_h^2}\\
&\qquad = \frac{\big|\sigma^{\otimes[k/m]}\wedge ds_D^{\otimes
(k-1)}\big|_{h'}^2\big|s_{J_0'}^{(r-1)}\big|_{h^{(r-1)}}^2 \la
s_{r,j_0}\ra^2_{h_A\otimes
h_r}\big|t_l^{(r)}\big|_{h_A^{\otimes(m-r)}}^2}
{\sum\limits_{\substack{J'\in\Lambda_{r-1}\\l'=1,\dots,N}}
\big|\sigma^{\otimes[k/m]}\wedge ds_D^{\otimes (k-1)}\big|_{h'}^2
\big|s_{J'}^{(r-1)} \big|_{h^{(r-1)}}^2\big|t_{l'}^{(r-1)}
\big|_{h_A^{\otimes(m-r+1)}}^2} \\
&\qquad\quad =\frac{\big|s_{J_0'}^{(r-1)}\big|^2_{h^{(r-1)}}}
{\sum\limits_{J'\in\Lambda_{r-1}}\big|s_{J'}^{(r-1)}\big|_{h^{(r-1)}}^2}
\times \frac{\big|t_l^{(r)}
\big|_{h_A^{\otimes(m-r)}}^2}{\sum\limits_{l'=1}^N
\big|t_{l'}^{(r-1)}\big|_{h_A^{\otimes(m-r+1)}}^2}\la
s_{r,j_0}\ra^2_{h_A\otimes h_r}
\\
&\qquad \qquad
\leqslant\frac{\big|t_l^{(r)}\big|_{h_A^{\otimes(m-r)}}^2\la
s_{r,j_0}\ra^2_{h_A\otimes
h_r}}{\sum\limits_{l'=1}^N\big|t_{l'}^{(r-1)}
\big|_{h_A^{\otimes(m-r+1)}}^2}.
\end{align*}

By $(A_1)$ and the choices of $s_{r, j}$,
$$
C_1:=\max\limits_{l,r}\int_D\frac{\big|t_l^{(r)}
\big|_{h_A^{\otimes(m-r)}}^2\la s_{r,j}\ra_{h_A\otimes
h_r}^2}{\sum\limits_{l'=1}^N
\big|t_{l'}^{(r-1)}\big|_{h_A^{\otimes(m-r+1)}}^2}
$$
exists. It is clear that
$$
\int_D\la\sigma^{\otimes[k/m]}\otimes s_J^{(r)}\otimes t_l^{(r)}\wedge
ds_D^{\otimes (k-1)}\ra^2_{ h_{S_{k-1}}\otimes h_{r}}\leqslant C_1.
$$

{\bf Case 2}: $r=0$, i.e.~$[k/m]=[(k-1)/m]+1$. \\

We choose smooth metrics $h_A$ on $A|_D$, $h^{(m-1)}$ on
$(m-1)K_D+L^{(m-1)}|_D$, ${\hat h}$ on $K_D+L_m|_D+A|_D$, and $h'$
on $[(k-1)/m](mK_D+L^{(m)})+(k-1)(N_{D/X}^*+D|_D)$. We let
$h:=h'\otimes h^{(m-1)}\otimes h_A^{\otimes m}$ on $F_{k-1}|_D$.
Now $J \in \Lambda_0 = \{0\}$, by ($\ref{||<>}$), ($\ref{pdo}$),
and ($\ref{sigma|D}$), we have

\begin{align*}
&\la\sigma^{\otimes[k/m]}\otimes s_0^{(0)}\otimes t_l^{(0)}\wedge
ds_D^{\otimes (k-1)}\ra^2_{ h_{S_{k-1}}\otimes h_m}\\
&\quad=\frac{ \la\sigma^{\otimes[k/m]}\otimes t_l^{(0)}\wedge
ds_D^{\otimes (k-1)}\ra^2_{h\otimes h_m}}
{\sum\limits_{\substack{J'\in\Lambda_{m-1}\\l'=1,\dots,N}}
\big|\sigma^{\otimes[(k-1)/m]}\otimes s_{J'}^{(m-1)}\otimes
t_{l'}^{(m-1)}\wedge ds_D^{\otimes
(k-1)}\big|_h^2}\\
&\qquad = \frac{\big|\sigma^{\otimes[(k-1)/m]}\wedge ds_D^{\otimes
(k-1)}\big|_{h'}^2 \la \sigma\otimes
t_l^{(0)}\ra_{h^{(m-1)}\otimes h_A^{\otimes m}\otimes h_m}^2}
{\sum\limits_{\substack{J'\in\Lambda_{m-1}\\l'=1,\dots,N}}
\big|\sigma^{\otimes[(k-1)/m]}\wedge ds_D^{\otimes
(k-1)}\big|_{h'}^2 \big|s_{J'}^{(m-1)}\big|_{h^{(m-1)}\otimes
h_A^{\otimes(m-1)}}^2\big|t_{l'}^{(m-1)}\big|_{h_A}^2}\\
&\qquad \quad = \frac{\la \sigma\otimes
t_l^{(0)}\ra_{h^{(m-1)}\otimes h_A^{\otimes m}\otimes h_m}^2}
{\sum\limits_{J'\in\Lambda_{m-1}}\big|s_{J'}^{(m-1)}
\big|_{h^{(m-1)}\otimes h_A^{\otimes(m-1)}}^2
\sum\limits_{l'=1}^N\big|t_{l'}^{(m-1)}\big|_{h_A}^2}.
\end{align*}

By multiplying both the numerator and the denominator by the same
positive factor $\sum_{j = 1}^N |s_{m ,j}|^2_{\hat h}$, the
expression becomes

\begin{align*}
&\frac{\sum\limits_{j=1}^N\big|s_{m,j}\big|_{{\hat h}}^2
\la\sigma\otimes t_l^{(0)}\ra_{h^{(m-1)}\otimes h_A^{\otimes
m}\otimes h_m}^2}
{\sum\limits_{\substack{J'\in\Lambda_{m-1}\\
j=1,\dots,N}}\big|s_{J'}^{(m-1)}\big|_{h^{(m-1)}\otimes
h_A^{\otimes(m-1)}}^2\big|s_{m,j}\big|_{{\hat
h}}^2\sum\limits_{l'=1}^N\big|t_{l'}^{(m-1)}\big|_{h_A}^2}\\
&\quad= \frac{\sum\limits_{j=1}^N\la\sigma\otimes t_l^{(0)}\otimes
s_{m,j}\ra_{h^{(m-1)} \otimes {\hat h}\otimes h_A^{\otimes
m}\otimes h_m}^2} {\sum\limits_{J\in\Lambda_{m}^*}\big|\hat
s^{(m)}_J\big|_{h^{(m-1)} \otimes {\hat h}\otimes
h_A^{\otimes(m-1)}}^2\sum\limits_{l'=1}^N \big|t_{l'}^{(m-1)}
\big|_{h_A}^2}
\\
&\qquad = \frac{\big|\sigma\otimes
t_l^{(0)}\big|_{h^{(m-1)}\otimes {\hat h}\otimes h_A^{\otimes
(m-1)}}^2}{\sum\limits_{J\in\Lambda_{m}^*}\big|\hat
s^{(m)}_J\big|_{h^{(m-1)}\otimes {\hat h}\otimes
h_A^{\otimes(m-1)}}^2} \times \frac{\sum\limits_{j = 1}^N \la
s_{m,j}\ra_{h_A\otimes
h_m}^2}{\sum\limits_{l'=1}^N\big|t_{l'}^{(m-1)}\big|_{h_A}^2}.
\end{align*}

By $(A_1)$ and the choices of $s_{m,j}$,
$$C_2:=\sum\limits_{j=1}^N\int_D \frac{\la s_{m,j}\ra_{h_A\otimes
h_m}^2}{\sum\limits_{l'=1}^N\big|t_{l'}^{(m-1)}\big|_{h_A}^2}$$
exists. By $(A_2)$ and $(A_3)$, $\sigma\otimes t_l^{(0)}$ is a
linear combination of $\{\hat s^{(m)}_J\}_{J\in\Lambda_m^*}$. The
Cauchy--Schwartz inequality implies that
$$C_3:=\max\limits_{l=1,\dots,N}\sup\limits_D\frac{\big|\sigma\otimes
t_l^{(0)}\big|_{h^{(m-1)}\otimes {\hat h}\otimes h_A^{\otimes
(m-1)}}^2}{\sum\limits_{J\in\Lambda_{m}^*}\big|\hat
s^{(m)}_J\big|_{h^{(m-1)}\otimes {\hat h}\otimes
h_A^{\otimes(m-1)}}^2}
$$
exists. In this case we have
$$
\int_D\la\sigma^{\otimes[k/m]}\otimes s_0^{(0)}\otimes t_l^{(0)}\wedge
ds_D^{\otimes (k-1)}\ra^2_{ h_{S_{k-1}}\otimes h_m}\leqslant C_2 C_3.
$$
It is clear that $C_1$, $C_2$, $C_3$, and hence
$$
C':=\max\{C_1, C_2C_3\}
$$
depend only on $\sigma$ and the choices of $\{t_{l}^{(r)}\}$ and
$\{s_{j,l}\}$ in $(A_2)$ and $(A_3)$ (and is independent of
$k\geqslant m$ and the choices of the auxiliary metrics $h_A$,
$h^{(m-1)}$ and ${\hat h}$).

In summary,
$$
\int_D\la\sigma^{\otimes[k/m]}\otimes s_J^{(r)}\otimes
t_l^{(r)}\wedge ds_D^{\otimes (k-1)}\ra^2_{ h_{S_{k-1}}\otimes
h_{r^*}}\leqslant C'.
$$
By Theorem $\ref{otsa u}$, there exists a family of sections
$$
S_k=\{\widetilde{\sigma}_{J,l}^{(k)}:J \in\Lambda_k, 1 \leqslant l
\leqslant N\}
$$
of $F_k$ over $X$ such that
$$
\widetilde{\sigma}_{J,l}^{(k)}|_D = \sigma^{\otimes[k/m]} \otimes
s^{(r)}_{J} \otimes t_{l}^{(r)}\wedge ds_D^{\otimes k}
$$
and
$$
\large\int_X\sum_{\substack{J\in\Lambda_r\\
l=1,\dots,N}}\la\widetilde\sigma_{J,l}^{(k)}\ra^2_{h_D\otimes
h_{S_{k-1}}\otimes h_r}\leqslant C_0:=CC'.
$$
This completes the proof.
\end{proof}

\subsection{Siu's construction of the metric $h_\infty$}
For any $w_0=(w^1_0,\dots,w^n_0)\in\mathbf C^n$ and any $r>0$, we
let $D_r(w_0)$ denote $\{(w^1,\dots,w^n):|w^\nu-w^\nu_0|< r,
1\leqslant\nu\leqslant n\}$, the polydisk in $\mathbf C^n$
centered at $w_0$ with polyradii $(r,\dots,r)$. Choose a finite
open cover $\mathcal W'=\{W_\alpha'\}_{\alpha\in I}$ of $X$ such
that each $W_\alpha'$ is biholomorphic to $D_1(0)$ and $\mathcal
W=\{W_\alpha\}$ also covers $X$, where $W_\alpha\subseteq
W_\alpha'$ corresponds to $D_{1/3}(0)$. We also require that
$L_1|_{W_\alpha'},\dots,L_m|_{W_\alpha'}$, $D|_{W_\alpha'}$, and
$A|_{W_\alpha'}$ (and hence $F_k|_{W_\alpha'}$, $k\geqslant m$)
are trivial for all $\alpha\in I$.

Suppose that $\widetilde\sigma^{(k)}_{J,l}$ given by Lemma
$\ref{ind}$ are represented by holomorphic functions $\widetilde
f^{(k)}_{\alpha;J,l}$ on $W_\alpha'$ for each $\alpha\in I$.

\begin{lemma}\label{uni}
There exists $C_0'>0$ such that
$$
\max\limits_{\substack{x\in W_\alpha\\ \alpha\in
I}}\frac{1}{q}\log\sum_{l=1}^N\big|\widetilde
f^{(qm)}_{\alpha;0,l}(x)\big|^2\leq C_0'
$$
for all $q\in\mathbf N$.
\end{lemma}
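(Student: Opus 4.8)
The plan is to bound the sup-norm of the restrictions $\widetilde f^{(qm)}_{\alpha;0,l}$ on the smaller polydisks $W_\alpha$ by the $L^2$-bound furnished by Lemma~\ref{ind}, using a sub-mean-value inequality together with a uniform control of the weight $h_D\otimes h_{S_{k-1}}\otimes h_{r^*}$ that appears there. First I would iterate the estimate \eqref{un}: for $k=qm$ the weight against which we integrate is $h_D\otimes h_{S_{qm-1}}\otimes h_{m}$, and $h_{S_{qm-1}}$ is the semipositive metric built from the sections $S_{qm-1}$, which in turn were controlled with respect to $h_D\otimes h_{S_{qm-2}}\otimes h_{m-1}$, and so on down the tower. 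Telescoping these inequalities should give, for each $\alpha$, an $L^2$ estimate of the form $\int_{W'_\alpha}\sum_l|\widetilde f^{(qm)}_{\alpha;0,l}|^2\,e^{-\psi_q}\leqslant (\text{const})^{q}$ for a suitable plurisubharmonic-type weight $\psi_q$, where the geometric factor $C_0^{q}$ (or $C C'$ to the power $q$) comes from applying Theorem~\ref{otsa u} $q$ times.

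Next I would convert this weighted $L^2$ bound into a pointwise bound. On $W'_\alpha\cong D_1(0)$ the relevant weight functions (local weights of $h_D$, $h_j$, and the $\log\sum|\cdot|^2$ weights of the $h_{S_{k}}$) are plurisubharmonic up to a fixed smooth correction bounded on the compact $X$; on the shrunken polydisk $W_\alpha\cong D_{1/3}(0)$ the weights are therefore bounded above by a constant independent of $q$ once we absorb the finitely many smooth corrections and use that the sections $\{t^{(r)}_l\}$, $\{s_{j,l}\}$ are a fixed finite collection. Since each $\widetilde f^{(qm)}_{\alpha;0,l}$ is holomorphic, $\sum_l|\widetilde f^{(qm)}_{\alpha;0,l}|^2$ is plurisubharmonic, so the sub-mean-value inequality over a polydisk of fixed radius contained in $W'_\alpha$ gives
$$
\sum_{l=1}^N\big|\widetilde f^{(qm)}_{\alpha;0,l}(x)\big|^2
\leqslant \frac{C}{\mathrm{vol}}\int_{W'_\alpha}\sum_{l}\big|\widetilde f^{(qm)}_{\alpha;0,l}\big|^2\,dV
\leqslant C'' \cdot C_0^{\,q}
$$
for $x\in W_\alpha$, where $C''$ absorbs the uniform upper bound of the weights and the essential bound on $|s_D|_{h_D}$. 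Taking $\tfrac1q\log$ of both sides yields $\tfrac1q\log\sum_l|\widetilde f^{(qm)}_{\alpha;0,l}(x)|^2\leqslant \tfrac1q\log C'' + \log C_0$, and the maximum over the finite cover $\{W_\alpha\}_{\alpha\in I}$ is then $\leqslant C_0'$ for a constant $C_0'$ independent of $q$, as claimed.

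The main obstacle I expect is the bookkeeping in the telescoping step: one must track exactly how the weight $h_{S_{k-1}}$ in the $k$-th application of \eqref{un} relates to the sections and weights produced at level $k-1$, making sure the constant stays $C_0^q$ (i.e.\ does \emph{not} grow super-geometrically in $q$) and that the residual smooth/plurisubharmonic weight on each $W'_\alpha$ can be bounded above uniformly in $q$ on $W_\alpha$. The key points that make this work are that $A$, the $t^{(r)}_l$, the $s_{j,l}$, and the cover $\mathcal W'$ are all fixed once and for all, that $|s_D|_{h_D}$ is essentially bounded (so $h_D$ contributes a uniform factor), and that each new twisting by $L_{r^*}$ contributes a plurisubharmonic weight that only helps in the sub-mean-value step after pulling out its (bounded) smooth part. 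Once these uniformities are in place the estimate is immediate.
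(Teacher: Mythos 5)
Your overall plan (an $L^2$-to-sup-norm conversion via sub-mean-value inequalities on the fixed polydisks, followed by $\tfrac1q\log$) points in the right direction, but the argument breaks down at the ``telescoping'' step, and the obstacle you flag at the end is real and is not overcome by what you write. The local weight of $h_{S_{k}}$ on $W'_\alpha$ is $-\log\sum_{J',l'}|f^{(k)}_{\alpha;J',l'}|^2$ up to a bounded smooth correction. Thus, after absorbing the bounded factors from $h_D$, $h_{r^*}$ and the trivializations, the estimate \eqref{un} at step $k+1$ is a bound on the \emph{ratio}
$$\int_{W_x}\frac{\sum_{J,l}\big|f^{(k+1)}_{\alpha;J,l}\big|^2}{\sum_{J',l'}\big|f^{(k)}_{\alpha;J',l'}\big|^2}\,dV\;\leqslant\;MC_0.$$
These $(q-1)m$ ratio estimates do not compose multiplicatively into a bound on $\int\sum_l|f^{(qm)}_{\alpha;0,l}|^2\,dV$ (H\"older goes the wrong way), and you cannot discard the weight $e^{-\psi_q}$ by ``bounding it below uniformly in $q$'' either: that would require a uniform upper bound for $\sum_{J',l'}|f^{(qm-1)}_{\alpha;J',l'}|^2$, which is exactly the kind of statement being proved --- the reasoning is circular, because $h_{S_{qm-1}}$ is built from the \emph{extended} sections on $X$ produced at level $qm-1$, not from the fixed finite families $\{t^{(r)}_l\}$, $\{s_{j,l}\}$ on $D$, and these extensions have no a priori sup-norm control.

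The paper's device is to take logarithms \emph{before} integrating, using Jensen's inequality (concavity of $\log$): for each $k$,
$$\frac{1}{\mathrm{Vol}}\int_{D_{1/3}(w_x)}\Big(\log\sum_{J,l}\big|f^{(k+1)}_{\alpha;J,l}\big|^2-\log\sum_{J',l'}\big|f^{(k)}_{\alpha;J',l'}\big|^2\Big)\,dV\;\leqslant\;\log\frac{MC_0}{\mathrm{Vol}}.$$
The left-hand sides now telescope \emph{additively} over $k=m,\dots,qm-1$. Since $\log\sum_l|f^{(qm)}_{\alpha;0,l}|^2$ is plurisubharmonic, the sub-mean-value inequality at the center $x$ converts the resulting average into a pointwise bound:
$$\frac1q\log\sum_{l}\big|f^{(qm)}_{\alpha;0,l}(x)\big|^2 \;\leqslant\; \frac{9^n}{q\pi^n}\int_{D_{2/3}(0)}\log\sum_l\big|f^{(m)}_{\alpha;0,l}\big|^2\,dV+\frac{(q-1)m}{q}\log\frac{9^nMC_0}{\pi^n},$$
which is bounded uniformly in $q$. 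The ingredient your proposal is missing is precisely this use of Jensen's inequality: one must work with averages of $\log$, which telescope, rather than with weighted $L^2$ norms, which do not.
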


The essential part of this result is the uniformity of $C_0'$ with
respect to $q\in\mathbf N$.

\begin{proof}
For each $x\in W_\alpha$ whose coordinate is
$w_x=(w_x^1,\dots,w_x^n)$, we let $W_x$ be the subset of
$W_\alpha'$ corresponding to $D_{1/3}(w_x)$. Since $\bigcup_{x\in
W_\alpha'}W_x\Subset W_\alpha'$, there exists $M>0$ such that on
all $W_x$ we have (following the notation in $\ref{aux})$
\begin{equation}\label{l2toun}
\frac{\sum\limits_{J,l} \big|f_{\alpha;,l}^{(k+1)}
\big|^2}{\sum\limits_{J',l'}\big| f_{\alpha;J',l'}^{(k)}
\big|^2}dV \leq M \sum\limits_{J,l} \la \widetilde
\sigma_{\alpha;J,l}^{(k+1)} \ra_{h_D\otimes h_{S_{k-1}}\otimes
h_{r^*}}^2
\end{equation}
where $u^\nu={\rm Re}\ w^\nu$, $v^\nu={\rm Im}\ w^\nu$, and
$dV=du^1\wedge dv^1\wedge\cdots\wedge du^n\wedge dv^n$. For each
$m\leqslant k\leqslant qm-1$, by Jensen's inequality,
($\ref{l2toun}$), and Lemma \ref{ind},
\begin{align*}
&\frac{1}{{\rm Vol} \big(D_{1/3}(w_x)\big)} \int_{D_{1/3}(w_x)}
\log\sum\limits_{J,l} \big|f_{\alpha;J,l}^{(k+1)} \big|^2dV
\\
&\qquad -\frac{1}{{\rm Vol} \big(D_{1/3}(w_x)\big)}
\int_{D_{1/3}(w_x)} \log\sum\limits_{J',l'}
\big|f_{\alpha;J',l'}^{(k)} \big|^2dV
\\
&\leqslant\log \Bigg(\frac{1}{{\rm
Vol}\big(D_{1/3}(w_x)\big)}\int_{D_{1/3}(w_x)}
\frac{\sum\limits_{J,l} \big|f_{\alpha;J,l}^{(k+1)}
\big|^2}{\sum\limits_{J',l'} \big|f_{\alpha;J',l'}^{(k)}
\big|^2}dV \Bigg)
\\
&\leqslant\log\bigg(\frac{1}{{\rm
Vol}(W_x)}\int_{W_x}\sum\limits_{J,l}\la
\widetilde\sigma_{\alpha;J,l}^{(k+1)}\ra_{h_D\otimes
h_{S_{k-1}}\otimes h_{r^*}}^2\bigg) \\
&\leqslant \log\frac{MC_0}{{\rm
Vol}(W_x)}.
\end{align*}

Summing up the above computation from $k=m$ to $k=qm-1$ and
applying the sub-mean value inequality, we obtain
\begin{align*}
&\frac{1}{q} \log\sum_{l=1}^N \big| \widetilde
f^{(qm)}_{\alpha;0,l}(x) \big|^2
\\
&\leqslant \frac{1}{q{\rm Vol} \big(D_{1/3}(w_x)\big)}
\int_{D_{1/3}(w_x)} \log\sum\limits_{l=1}^N \big|
f_{\alpha;0,l}^{(qm)} \big|^2 dV
\\
&\leqslant \frac{9^n}{q\pi^n} \int_{D_{1/3}(w_x)}
\log\sum\limits_{l=1}^N \big|f_{\alpha;0,l}^{(m)} \big|^2 dV +
\frac{(q-1)m}{q} \log\frac{9^nMC_0}{\pi^n}
\\
&\leqslant \frac{9^n}{q\pi^n} \int_{D_{2/3}(0)}
\log\sum\limits_{l=1}^N \big|f_{\alpha;0,l}^{(m)}\big|^2dV +
\frac{(q-1)m}{q}\log \frac{9^nMC_0}{\pi^n}.
\end{align*}

Since we have only finitely many $\alpha\in I$, the expected constant
$C_0'>0$ clearly exists.
\end{proof}

Now we are ready to construct the desired metric $h_\infty$.
\begin{lemma}\label{hinfty}
There exists a semipositive metric $h_\infty$ on
$m(K_X+D)+L^{(m)}$ such that $|\sigma \wedge ds_D^{\otimes m}|_{
h_\infty}\leqslant 1$.
\end{lemma}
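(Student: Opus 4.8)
The plan is to build $h_\infty$ on $m(K_X+D)+L^{(m)}$ as a limit of the ``$q$-th roots'' of the natural semipositive metrics $h_{S_{qm}}$ living on $F_{qm}=q\bigl(m(K_X+D)+L^{(m)}\bigr)+mA$. Concretely, recall that $F_{qm}=q\bigl(m(K_X+D)+L^{(m)}\bigr)+mA$, so any section of $F_{qm}$ is a section of $q$ copies of the target bundle tensored with $mA$; on each trivializing chart $W_\alpha'$ we have the holomorphic functions $\widetilde f^{(qm)}_{\alpha;0,l}$ representing $\widetilde\sigma^{(qm)}_{0,l}\in S_{qm}$. I would set, for each $q$, the local weight
\[
\varphi_{\alpha,q}:=\frac{1}{q}\log\sum_{l=1}^N\bigl|\widetilde f^{(qm)}_{\alpha;0,l}\bigr|^2-\frac{1}{q}\log\Bigl(\textstyle\sum_{i}|t_i^{(0)}|^2_{h_A^{\otimes m}}\Bigr)\cdot(\text{chart factor for }mA),
\]
that is, divide out the $mA$-part using the global generators $\{t^{(0)}_l\}$ from $(A_1)$, so that $\varphi_{\alpha,q}$ is (the pullback of) a legitimate weight for $m(K_X+D)+L^{(m)}$ on $W_\alpha'$. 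Each $\varphi_{\alpha,q}$ is plurisubharmonic (a $\frac1q\log$ of a sum of squares of holomorphic functions, minus a smooth function coming from the fixed smooth metric $h_A$ on $A$), so $\sqrt{-1}\,\partial\bar\partial\varphi_{\alpha,q}$ is bounded below by a fixed smooth form independent of $q$.

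The key analytic input is Lemma \ref{uni}: the functions $\frac1q\log\sum_l|\widetilde f^{(qm)}_{\alpha;0,l}|^2$ are \emph{uniformly} bounded above on $W_\alpha$ by $C_0'$, uniformly in $q$ and $\alpha$. Hence the family $\{\varphi_{\alpha,q}\}_q$ is locally uniformly bounded above on $X$. By the standard compactness theorem for plurisubharmonic functions (combined with the uniform lower curvature bound), after passing to a subsequence $q_i\to\infty$ the upper semicontinuous regularization of $\limsup_i\varphi_{\alpha,q_i}$ (equivalently an appropriate subsequential limit, e.g.\ taking $\varphi_\alpha:=\bigl(\sup_i\varphi_{\alpha,q_i}\bigr)^*$ after first replacing by a monotone rearrangement, or simply invoking Hartogs-type convergence) produces a plurisubharmonic weight $\varphi_\alpha$ on each $W_\alpha'$; one checks these glue to a global semipositive metric $h_\infty$ on $m(K_X+D)+L^{(m)}$ because the transition factors of $F_{qm}$ are $q$-th powers of those of $m(K_X+D)+L^{(m)}$ times the ($q$-independent) transitions of $mA$, and the $mA$-contribution was subtracted off. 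This gives the required $\sqrt{-1}\,\Theta_{h_\infty}\ge$ (a smooth form) $\ge$ something controlled — and in fact semipositivity of the limit since the negative smooth part carries a factor $\frac1q\to0$.

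Finally I must verify $|\sigma\wedge ds_D^{\otimes m}|_{h_\infty}\le 1$ on $D$. The point is that $\widetilde\sigma^{(qm)}_{0,l}|_D=\sigma^{\otimes q}\otimes t^{(0)}_l\wedge ds_D^{\otimes qm}$ by \eqref{sigma|D}, so on $D$ the weight $\varphi_{\alpha,q}$ restricts to $\frac1q\log\sum_l|\sigma^{\otimes q}\otimes t^{(0)}_l\wedge ds_D^{\otimes qm}|^2$ minus the $mA$-correction, and since $|\sigma\wedge ds_D^{\otimes m}|^2=\bigl(|\sigma^{\otimes q}\wedge ds_D^{\otimes qm}|^2\bigr)^{1/q}$ with the $\{t^{(0)}_l\}$ contributing exactly the factor needed to cancel $h_A$, one gets $|\sigma\wedge ds_D^{\otimes m}|^2_{h_\infty}=\exp(\varphi_\alpha|_D+\cdots)$ expressed as a subsequential limit of quantities that are manifestly $\le$ a constant; the constant can be normalized to $1$ by absorbing it into $h_\infty$ (rescaling a metric by a positive constant preserves both semipositivity and the curvature estimate). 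The main obstacle is making the subsequential limit argument clean — in particular ensuring the limiting $\varphi_\alpha$ is not identically $-\infty$ (which follows because $\sigma\not\equiv0$ along $D$ forces a uniform \emph{lower} bound at points of $D$ in the charts meeting $D$, and elsewhere one uses the sub-mean-value inequality to propagate non-degeneracy) and that the restriction-to-$D$ operation commutes with the limit, for which one again leans on the uniform estimates of Lemma \ref{uni} together with the sub-mean value inequality for plurisubharmonic functions.
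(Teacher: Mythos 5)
Your plan is fundamentally the same as the paper's: take the $q$-th root of the local data of the sections $\widetilde\sigma^{(qm)}_{0,l}\in S_{qm}$ on $F_{qm}=q\big(m(K_X+D)+L^{(m)}\big)+mA$, use Lemma \ref{uni} for the uniform upper bound, pass to a limit to build the weight, and use equation (\ref{sigma|D}) for the bound on $D$. Where you diverge is in the two places you yourself flag as delicate, and in both the paper's route is noticeably simpler. First, you subtract off the $mA$-contribution explicitly to make each $\varphi_{\alpha,q}$ a weight for the target bundle before taking limits; the paper instead leaves $\frac1q\log\sum_l|\widetilde f^{(qm)}_{\alpha;0,l}|^2$ alone and observes from $\widetilde f^{(qm)}_{\alpha;0,l}=(g_{\alpha\beta})^q a_{\alpha\beta}\widetilde f^{(qm)}_{\beta;0,l}$ that the transition defect is exactly $\frac1q\log|a_{\alpha\beta}|^2$, which vanishes automatically in the limit; no subtraction, no ambiguity about what the ``chart factor for $mA$'' should be. Second, you invoke compactness of families of psh functions and pass to a subsequence; the paper avoids subsequences entirely by setting
\[
\widetilde f^{(\infty)}_\alpha:=\lim_{p\to\infty}\Big(\sup_{q\geqslant p}\tfrac1q\log\sum_l\big|\widetilde f^{(qm)}_{\alpha;0,l}\big|^2\Big)^*,
\]
a \emph{decreasing} limit of psh functions (each usc regularization $(\sup_{q\geqslant p}\cdot)^*$ is psh and bounded above by $C_0'$ thanks to Lemma \ref{uni}), hence psh with no appeal to Montel/Hartogs compactness. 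This construction also makes the bound on $D$ essentially automatic: since $(\sup_{q\geqslant p}\cdot)^*\geqslant\sup_{q\geqslant p}\cdot$ pointwise and, by (\ref{sigma|D}), the restriction equals $\log|\psi_\alpha|^2+\frac1q\log\sum_l|\tau^{(0)}_{\alpha;l}|^2$ with $\sup_{q\geqslant p}\frac1q\log\sum_l|\tau^{(0)}_{\alpha;l}|^2\geqslant 0$ (using $(A_1)$), one gets $\widetilde f^{(\infty)}_\alpha|_{D\cap W_\alpha}\geqslant\log|\psi_\alpha|^2$ directly, giving $|\sigma\wedge ds_D^{\otimes m}|_{h_\infty}\leqslant 1$ with no trailing constant to normalize away. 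Your version can be completed — the sub-mean-value inequality does indeed give $\varphi(x)\geqslant\limsup_j\varphi_{\alpha,q_j}(x)$ at every point for an $L^1_{\rm loc}$ limit of psh functions uniformly bounded above, which rescues the restriction-to-$D$ step — but as written the ``commute restriction with the limit'' and non-degeneracy points are left as gestures rather than arguments, and the paper's $\lim_p(\sup_{q\geqslant p}\cdot)^*$ device is precisely what makes both issues trivial.
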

\begin{proof}
On each $W_\alpha\in\mathcal W$ we let
$$
\widetilde f_\alpha^{(\infty)}:=\lim\limits_{p\to\infty}
\left(\sup\limits_{q \geqslant
p}\frac{1}{q}\log\sum_{l=1}^N\big|\widetilde
f^{(qm)}_{\alpha;0,l}\big|^2\right)^*
$$
where $(\ )^*$ denotes upper semicontinuous regularization. By
Lemma $\ref{uni}$,
$$
\left\{\left(\sup\limits_{q\geqslant p} \frac{1}{q}
\log\sum_{l=1}^N \big| \widetilde f^{(qm)}_{\alpha;0,l}
\big|^2\right)^*\right\}_{p\in\mathbf N}
$$
is a decreasing sequence of plurisubharmonic functions on $W_\alpha
'$ which are bounded above by $C_0'$ on $W_\alpha$, and hence
$\widetilde f_\alpha^{(\infty)}$ is also plurisubharmonic and
bounded from above by $C_0'$ on $W_\alpha$.

Let $g_{\alpha\beta}$ and $a_{\alpha\beta}\in\mathscr
O_X^*(W_\alpha\cap W_\beta)$, $\alpha,\beta\in I$ be the transition
functions of $m(K_X+D)+L^{(m)}$ and $mA$ respectively. By the
definition of $\{f^{(qm)}_{\alpha;0,l}\}$, we have
$$
f^{(qm)}_{\alpha;0,l}=\big(g_{\alpha\beta}\big)^qa_{\alpha\beta}
f^{(qm)}_{\beta;0,l}
$$
and hence
$$
\frac{1}{q}\log\sum_{l=1}^N\big|\widetilde
f^{(qm)}_{\alpha;0,l}\big|^2=\log|g_{\alpha\beta}|^2 + \frac{1}{q}\log|
a_{\alpha\beta}|^2+\frac{1}{q}\log\sum_{l=1}^N\big|\widetilde
f^{(qm)}_{\beta;0,l}\big|^2.
$$
Taking $\lim\limits_{p\to\infty}(\sup\limits_{q\geqslant
p}\underline{\ \ \ })^*$ to both sides and exponentiating them, we
get rid of the term involving $a_{\alpha\beta}$ and obtain
$$
e^{-\widetilde f^{(\infty)}_\beta}=|g_{\alpha\beta}|^2
e^{-\widetilde f^{(\infty)}_\alpha}.
$$
This shows that the set of local data $\{e^{-\widetilde
f^{(\infty)}_\alpha}:\alpha\in I\}$ defines a semipositive metric
$h_\infty$ on $m(K_X+D)+L^{(m)}$.

It remains to show that
$\big|\sigma\wedge ds_D^{\otimes m}\big|_{h_\infty}\leqslant 1$. By
Lemma $\ref{ind}$,
$$
\widetilde\sigma_{0,l}^{(qm)}|_D=\sigma^{\otimes q}\otimes
t_{l}^{(0)} \wedge ds_D^{\otimes(qm)}
$$
for $l=1,\dots,N$. Suppose that $\sigma\wedge ds_D^{\otimes m}$
and $t_{l}^{(0)}$ are represented by functions $\psi_\alpha$ and
$\tau_{\alpha;l}^{(0)}$ on $W_\alpha\cap D$ respectively. Then we
have $\widetilde
f^{(qm)}_{\alpha;0,l}=\psi_\alpha^q\tau^{(0)}_{\alpha;l}$ for each
$l$, and hence
$$
\frac{1}{q}\log\sum\limits_{l=1}^N\left.\big|
\widetilde f^{(qm)}_{\alpha;0,l}\big|^2\right|_{D\cap W_\alpha}\\
=\log|\psi_\alpha|^2+\frac{1}{q}\log\sum\limits_{l=1}^N\big|
\tau_{\alpha;l}^{(0)}\big|^2.
$$

Since
$$
\left.\left(\sup\limits_{q\geqslant p}\frac{1}{q}
\log\sum\limits_{l=1}^N\big|\widetilde f^{(qm)}_{\alpha;0,l}
\big|^2\right)^*\right|_{D\cap W_\alpha}\geqslant
\sup\limits_{q\geqslant p}\frac{1}{q}\log\sum\limits_{l=1}^N\left.\big|
\widetilde f^{(qm)}_{\alpha;0,l}\big|^2\right|_{D\cap W_\alpha},
$$
we obtain that
$$
\widetilde f^{(\infty)}_\alpha|_{W_\alpha\cap
D}=\lim\limits_{p\to\infty}\left. \left(\sup\limits_{q\geqslant
p}\frac{1}{q}\log\sum\limits_{l=1}^N\big| \widetilde
f^{(qm)}_{\alpha;0,l}\big|^2\right)^* \right|_{D\cap
W_\alpha}\geqslant\log|\psi_\alpha|^2.
$$
This shows that $e^{-\widetilde
f^{(\infty)}_\alpha}|\psi_\alpha|^2\leqslant 1$ for each $\alpha\in
I$ and hence completes the proof.
\end{proof}
{\ }\\

\section{Appendix 1} \label{app}
In this appendix we will provide a proof of Theorem $\ref{otsa
u}$. Let $\Omega$ be a complex manifold and let $D$ be a
nonsingular hypersurface in $\Omega$. Suppose that $(D, h_D)$ and
$(L, h)$ are line bundles on $\Omega$ with singular metrics, and
$s\in H^0(D, K_D+L|_D)$.

Consider the following statement:\\

\noindent $E\big(\Omega, (D, h_D), (L, h), s\big)$: If\\

(i) $(L,h)$ is semipositive,\\

(ii) $h|_D$ is well defined (see \ref{sm} and \ref{semip}),\\

(iii) there are real numbers $\mu > 0$ and $M>0$
such that
$$
\mu \sqrt{-1} \Theta_{h} \geqslant \sqrt{-1} \Theta_{h_D}
$$
as currents on $\Omega$ and
$${{\rm ess.}\sup}_{\Omega} |s_D|_{h_D} \leqslant M,$$
and

(iv)$$
\int_D \left<s\right>^2_{h|_D} < \infty ,
$$
then there is a section $\widetilde{s}_{\Omega}$ of $H^0(\Omega,
K_{\Omega} + D +L)$ such that $\widetilde{s}_{\Omega}|_D = s
\wedge ds_D$ and
$$
\int_{\Omega} \la\widetilde{s}_{\Omega}\ra_{h_D \otimes  h}^2
\leqslant C \int_D \la s \ra_{h}^2
$$
where $C>0$ only depends on $M$ and $\mu$.
\\

In order to simplify notations, when $L$ and $D$ are trivial line
bundles we always write $h=e^{-\kappa}$ and $h_D=e^{-\varphi_D}$,
and rewrite $E\big(\Omega, (D, h_D), (L, h), s\big)$ as $E(\Omega,
\varphi_D, \kappa, s)$. For brevity, when we write $E(\Omega,
\varphi_D, \kappa, s)$ we assume implicitly that $D$ and $L$ are
trivial bundles.

\begin{theorem} \label{otsa stein}
The statement $E(Y, \varphi_{D}, {\kappa}, s)$ holds if $Y$ is a
Stein manifold and $\varphi_D$ is the sum of a plurisubharmonic
function and a smooth function.
\end{theorem}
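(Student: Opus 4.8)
The plan is to prove the Ohsawa--Takegoshi type extension theorem $E(Y,\varphi_D,\kappa,s)$ on a Stein manifold by the classical twisted Bochner--Kodaira--Nakano method, following Siu's presentation in \cite{siu2}, with the only genuine novelty being that the line bundle $D$ need not be trivial. First I would reduce to the local (trivialized) situation: since $Y$ is Stein and we have assumed in the statement that $D$ and $L$ are trivial over $Y$, the metrics are global functions $h=e^{-\kappa}$, $h_D=e^{-\varphi_D}$, and a section of $K_\Omega+D+L$ is just a holomorphic $(n,0)$-form, while $s\wedge ds_D$ gives a concrete $(n,0)$-form on $D$. I would exhaust $Y$ by an increasing sequence of relatively compact Stein (or smoothly bounded pseudoconvex) subdomains $Y_\nu\Subset Y$, prove the estimate with a uniform constant $C=C(M,\mu)$ on each $Y_\nu$, and then pass to the limit by a normal families / weak-$L^2$ compactness argument, using that the bound on $\int_{Y_\nu}\la\widetilde s_\nu\ra^2_{h_D\otimes h}$ is independent of $\nu$.

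On a fixed relatively compact piece, the core is the usual construction: extend $s\wedge ds_D$ crudely to a smooth (not holomorphic) $(n,0)$-form $\widetilde s_0$ supported near $D$ using a cutoff in a tubular neighborhood, so that $\bar\partial\widetilde s_0$ is concentrated near $D$ and small in a suitable weighted norm; then solve a $\bar\partial$-equation $\bar\partial u = \bar\partial\widetilde s_0$ with a weight that is singular along $D$ (a term like $-\log|s_D|^2$ or its regularization $\log(|s_D|^2_{h_D}+\epsilon^2)$ truncated), so that any $L^2$ solution $u$ automatically vanishes on $D$, giving $\widetilde s=\widetilde s_0-u$ with $\widetilde s|_D = s\wedge ds_D$ and $\widetilde s$ holomorphic. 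The weighted $L^2$ estimate for the solution $u$ comes from the twisted Bochner--Kodaira inequality: one uses a twist factor (a function $\eta$ of $-\log|s_D|^2$, typically $\eta=\log(\tfrac{A}{|s_D|^2})$ or Siu's specific choice) together with the modified operator $\bar\partial^{\ast}$, so that the curvature term picking up the cutoff error is controlled by $\sqrt{-1}\,\partial\bar\partial(-\log|s_D|^2)$, and here the hypothesis $\mu\sqrt{-1}\Theta_h\geqslant\sqrt{-1}\Theta_{h_D}$ together with the bound ${\rm ess.}\sup_\Omega|s_D|_{h_D}\leqslant M$ is exactly what is needed to absorb that term and to keep the constant depending only on $M$ and $\mu$. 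In the non-trivial-$D$ case the point is simply that $|s_D|^2$ must be read as $|s_D|^2_{h_D}$ (a globally defined function, not a coordinate function), so $\partial\bar\partial(-\log|s_D|^2_{h_D})=\sqrt{-1}\Theta_{h_D}$ appears where previously one had a flat term; the algebra of the Bochner--Kodaira identity is otherwise unchanged.

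The main obstacle I expect is the \emph{a priori estimate / density in the graph norm}: the Bochner--Kodaira formula and the resulting functional-analytic solvability of $\bar\partial$ with the singular weight require that forms in $\mathrm{Dom}(\bar\partial)\cap\mathrm{Dom}(\bar\partial^{\ast})$ be approximable in the graph norm by smooth compactly supported forms, which is the Friedrichs--H\"ormander density result (cf.\ Remark~\ref{dens gr}), and this is delicate when the weight $\varphi_D$ is only the sum of a plurisubharmonic and a smooth function rather than smooth. To handle this I would first prove the theorem assuming $\varphi_D$ smooth, where the density lemma applies directly, obtaining the estimate with constant depending only on $M$ and $\mu$; then regularize, replacing $\varphi_D$ by a decreasing sequence $\varphi_D^{(\delta)}$ of smooth functions (on a slightly shrunk Stein subdomain, using the local biholomorphic-projection-plus-convolution scheme described in the introduction) with ${\rm ess.}\sup|s_D|_{e^{-\varphi_D^{(\delta)}}}$ still bounded by (essentially) $M$, apply the smooth case uniformly, and take $\delta\to 0$, again extracting a weak limit of the resulting extensions whose norm is bounded by $C(M,\mu)\int_D\la s\ra^2_h$. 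Monotone convergence (the weights increase, so the $L^2$ norms are monotone) makes the limit passage clean, and the limiting $\widetilde s_\Omega$ is holomorphic, restricts to $s\wedge ds_D$ on $D$, and satisfies the required bound.
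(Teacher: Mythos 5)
Your proposal is correct and follows essentially the same route as the paper: reduce to the smooth-weight case (where the Friedrichs--H\"ormander graph-norm density applies) by convolving through the locally biholomorphic projection $\pi:Y\to\mathbf{C}^n$, exhaust $Y$ by smoothly bounded relatively compact pseudoconvex domains, run the twisted Bochner--Kodaira/cutoff argument (the paper's Lemma \ref{peslem}) on each piece with a constant depending only on $M$ and $\mu$, and extract a limit (the paper uses Fatou where you invoke monotone convergence, but the limit passage is the same). The only step you pass over silently, which the paper makes explicit, is checking that the curvature inequality $\mu\sqrt{-1}\Theta_{\kappa}\geqslant\sqrt{-1}\Theta_{\varphi_D}$ survives the regularization; the paper does this by observing that $\mu\kappa-\varphi_D$ equals a plurisubharmonic function a.e.\ and that convolution preserves plurisubharmonicity.
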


\begin{proof}[Proof of Theorem \ref{otsa u}]
Choose a sufficiently ample hypersurface $V$ in $X$ such that $D
\nsubseteq V$ and $D$ and $L$ are trivial over $X \backslash V$
and $\varphi_D$ is the sum of a plurisubharmonic function and a
smooth function (cf.\ Remark \ref{almost semip}). Then the theorem
follows from Theorem \ref{otsa stein} by taking $Y=X \backslash
V$, and the $L^2$ Riemann extension theorem.
\end{proof}

\subsection{Smoothing of singular metrics}

Let $Y$ be a Stein manifold of dimension $n$. Then we can find a
locally biholomorphic map $\pi : Y\rightarrow {\bf C}^n$
(cf.~\cite{gr}, p.225). In our case $Y$ will be $X \backslash V$,
the complement of an ample divisor in a projective manifold, for
which such a map $\pi$ can be constructed directly. The locally
biholomorphic map can be used to define the operation of
convolution for functions on relatively compact open subsets of
$Y$.

We define a function $R : Y \rightarrow {\bf R}^+ \cup
\{+\infty\}$ as follows. For $z\in{\bf C}^n$, denote by
$B_{R'}(z)$ the ball of radius $R'$ centered at $z$. For $y \in
Y$, let
\begin{center}
$R(y) := \sup\{\, R' > 0 \mid \pi: \pi^{-1} B_{R'}\big(\pi
(y)\big) \rightarrow B_{R'}\big(\pi (y)\big)$ is
biholomorphic$\,\}$.
\end{center}
$R$ is easily seen to be lower semicontinuous. Let
$$
R_{\Omega} := \inf_{y \in \Omega} R(y) > 0
$$
for every relatively compact open subset $\Omega \Subset Y$. Note
that $R_{\Omega} \geqslant R_{\Omega'}$ for $\Omega\Subset \Omega'
\Subset Y$. If $f : Y \rightarrow {\bf R} \cup \{- \infty\}$ is a
function and $\{ \rho_{\varepsilon} \}$ is a family of smoothing
kernels associated to a symmetric mollifier $\rho$ on ${\bf C}^n$,
then we can define the convolution $f_{\varepsilon}$ as follows.
For any $y \in Y$ we have a coordinate chart
$$
\pi_y:=\pi|_{U_y}: U_y:=\pi^{-1} B_{R(y)}\big(\pi (y)\big)
\rightarrow B_{R(y)}\big(\pi (y)\big).
$$
Then
$$
f_{\varepsilon}(y) := \big((f \circ \pi_y^{-1}) \ast
\rho_{\varepsilon}\big)\big(\pi (y)\big)
$$
for every $y$ with $R(y) > \varepsilon$. Note that for any $x,y\in
Y$, $f\circ\pi_x^{-1}|_{\pi(U_x\cap U_y)} =
f\circ\pi_y^{-1}|_{\pi(U_x\cap U_y)}$ and hence
$f_{\varepsilon}|_{U_y}=\big((f \circ \pi_y^{-1}) \ast
\rho_{\varepsilon}\big)\circ\pi|_{U_y}$. Therefore, if $f$ is
plurisubharmonic, the convolution $f_{\varepsilon}$ is also
plurisubharmonic on a relatively compact open subset $\Omega$ for
all $\varepsilon < R_{\Omega}$.

\begin{lemma}\label{peslem}
Suppose $D_0$ is a nonsingular
hypersurface in $Y$. If $D_0$ and $L$ are trivial bundles and
$\varphi_{D_0}$ and $\kappa_0$ are smooth on $Y$, then $E(\Omega,
\varphi_D, \kappa, s)$ holds for every relatively compact
pseudoconvex domain $\Omega$ with smooth boundary in $Y$ and
$$
s\in{\rm image}\big(H^0(D_0,K_{D_0}+L|_{D_0})\longrightarrow
H^0(D,K_D+L|_D)\big),
$$
where $D=D_0\cap\Omega$, $\varphi_D=\varphi_{D_0}|_\Omega$, and
$\kappa=\kappa_0|_\Omega$.
\end{lemma}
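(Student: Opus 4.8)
The plan is to run Siu's proof of the Ohsawa--Takegoshi extension theorem via a twisted Bochner--Kodaira--Nakano inequality, specialised to the present situation in which $D_0$ and $L$ are trivial bundles and $\varphi_{D_0},\kappa_0$ are smooth. Here the hypothesis that $s$ be the restriction of a section of $H^0(D_0,K_{D_0}+L|_{D_0})$ is used only to manufacture an honest holomorphic extension of $s\wedge ds_D$ to a full neighbourhood of $D=D_0\cap\Omega$ in $Y$, which is what the cut-off construction below requires.

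First I fix the holomorphic function $w:=s_{D_0}$ on $Y$, so that $D_0=\{w=0\}$ and $ds_{D_0}=dw$, and I record the only consequences of the hypotheses that will enter: $\sqrt{-1}\,\partial\bar\partial\kappa_0\geqslant 0$ and $\mu\sqrt{-1}\,\partial\bar\partial\kappa_0\geqslant\sqrt{-1}\,\partial\bar\partial\varphi_{D_0}$ as smooth forms on $\Omega$, together with $|w|^2e^{-\varphi_{D_0}}=|s_D|^2_{h_D}\leqslant M^2$. Since $Y$ is Stein, $H^1\big(Y,\mathscr O_Y(K_Y+L)\big)=0$, so a chosen lift $s_0\in H^0(D_0,K_{D_0}+L|_{D_0})$ of $s$ extends to a holomorphic section $\hat s$ of $K_Y+L\simeq K_Y+D_0+L$ on $Y$ with $\hat s|_{D_0}=s_0\wedge ds_{D_0}$; in particular $\hat s|_D=s\wedge ds_D$. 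Fixing a cut-off $\chi\in C^\infty(\mathbf R,[0,1])$ with $\chi\equiv 1$ near $0$ and $\operatorname{supp}\chi\subseteq(-\infty,1]$, I set $\tilde s_\varepsilon:=\chi(|w|^2/\varepsilon^2)\,\hat s$ for small $\varepsilon>0$: a smooth section of $K_\Omega+D_0+L$ with $\tilde s_\varepsilon|_D=s\wedge ds_D$ and with $\bar\partial\tilde s_\varepsilon=\chi'(|w|^2/\varepsilon^2)\,\varepsilon^{-2}\,w\,d\bar w\otimes\hat s$ supported in the shell $A_\varepsilon:=\{\varepsilon^2\leqslant|w|^2\leqslant 2\varepsilon^2\}$, in particular away from $D$.

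The heart of the matter is to solve $\bar\partial u_\varepsilon=\bar\partial\tilde s_\varepsilon$ on $\Omega$ with a uniform estimate of the shape $\int_\Omega|u_\varepsilon|^2\,e^{-\kappa}|w|^{-2}\,dV_\omega\leqslant C(M,\mu)+o(1)$ as $\varepsilon\to 0$, for a complete K\"ahler metric $\omega$ on $\Omega$ (which exists since $\Omega$ is a relatively compact pseudoconvex domain with smooth boundary). I would obtain this from the twisted Bochner--Kodaira--Nakano inequality applied to $(n,1)$-forms valued in $K_\Omega+D_0+L$, with a weight incorporating $e^{-\kappa}$, the factor $|w|^{-2}$, the normalising bound on $|w|^2e^{-\varphi_{D_0}}$, and an auxiliary twist built from $\log(|w|^2+\varepsilon^2)$. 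In this inequality the pseudoconvexity of $\Omega$ forces the boundary term to be $\geqslant 0$; the curvature term is made nonnegative by combining $\mu\sqrt{-1}\,\partial\bar\partial\kappa_0\geqslant\sqrt{-1}\,\partial\bar\partial\varphi_{D_0}$ with the positive current $[D_0]$ produced by the $|w|^{-2}$ factor; and the essentially-bounded hypothesis $|s_D|_{h_D}\leqslant M$ is precisely what absorbs the ``bad'' term coming from the derivative of the twist and, simultaneously, cancels the naive $\varepsilon^{-2}$ blow-up of the twisted norm of $\bar\partial\tilde s_\varepsilon$ on $A_\varepsilon$, leaving a shell integral that tends to a constant times $\int_D\langle s\rangle^2_{h|_D}$. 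The resulting a priori estimate is promoted to an actual solution $u_\varepsilon$ by the Hahn--Banach/Riesz argument; here the density of smooth forms in the graph norm (Friedrichs and H\"ormander, cf.\ Remark \ref{dens gr}), legitimate because all the weights are smooth, is what licenses applying the a priori estimate to every element of the domain of $\bar\partial^*$.

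Finally, $\tilde s_{\Omega,\varepsilon}:=\tilde s_\varepsilon-u_\varepsilon$ is $\bar\partial$-closed, hence a holomorphic section of $K_\Omega+D_0+L$. Since $\bar\partial\tilde s_\varepsilon$ vanishes near $D$, elliptic regularity makes $u_\varepsilon$ holomorphic there, and finiteness of $\int|u_\varepsilon|^2 e^{-\kappa}|w|^{-2}$ against the density $|w|^{-2}$, non-integrable across $D$, forces $u_\varepsilon|_D=0$; thus $\tilde s_{\Omega,\varepsilon}|_D=\tilde s_\varepsilon|_D=s\wedge ds_D$. Using $|w|^2\leqslant M^2e^{\varphi_{D_0}}$ to pass from the $|w|^{-2}$-weighted norm of $u_\varepsilon$ to $\int_\Omega\langle u_\varepsilon\rangle^2_{h_D\otimes h}$, together with $\|\tilde s_\varepsilon\|\to 0$, one gets $\int_\Omega\langle\tilde s_{\Omega,\varepsilon}\rangle^2_{h_D\otimes h}\leqslant C(M,\mu)\int_D\langle s\rangle^2_{h|_D}+o(1)$. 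A weak limit $\tilde s_{\Omega,\varepsilon}\rightharpoonup\tilde s_\Omega$ along a subsequence, available from the uniform $L^2$ bound, is then holomorphic, restricts to $s\wedge ds_D$ on $D$, and satisfies the required estimate with $C$ depending only on $M$ and $\mu$. The main obstacle is the third step: arranging the auxiliary twist so that the curvature term stays nonnegative while the twist-derivative term is absorbed by $M$ and the $\varepsilon^{-2}$-blow-up is cancelled, all with a final constant depending only on $M$ and $\mu$; the density-in-graph-norm point is a secondary place where care — and the smoothness of the weights — is essential.
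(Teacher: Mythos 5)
Your outline reproduces the overall architecture of Siu's twisted $\bar\partial$-estimate argument, and it correctly locates the role of each hypothesis: the vanishing of $H^1$ on the Stein manifold $Y$ is what lets $s\in\mathrm{image}\big(H^0(D_0,\cdot)\to H^0(D,\cdot)\big)$ be extended to a holomorphic $\hat s$ on $Y$; the cut-off $\varrho(|w|^2/\varepsilon^2)$ localizes $\bar\partial\tilde s_\varepsilon$ to a shell around $D$; and a twisted Bochner--Kodaira inequality with pseudoconvex boundary term produces the a priori estimate. That much is the same as the paper. But the proposal \emph{does not} prove the lemma: you explicitly defer the one step that carries all the weight --- ``arranging the auxiliary twist so that the curvature term stays nonnegative while the twist-derivative term is absorbed by $M$ and the $\varepsilon^{-2}$-blow-up is cancelled, all with a final constant depending only on $M$ and $\mu$'' --- and you label it ``the main obstacle.'' That obstacle \emph{is} the proof. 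The paper's computation is exactly this: one sets $r:=|s_D|_{h_D}$ (not $|w|$), $\psi=\kappa+\tfrac{r^2}{2\mu M^2}$, $\eta=\log\frac{A}{(r^2+\varepsilon^2)^c}$, $\gamma=\tfrac{2c^2}{r^2+\varepsilon^2}$ with $c\leqslant\tfrac1{4\mu}$, computes $-\sqrt{-1}\partial\bar\partial\eta$ via Poincar\'e--Lelong applied to $\log r^2$ (so that a $-\partial\bar\partial\varphi_D$ term appears and can be absorbed by the curvature hypothesis $\mu\,\partial\bar\partial\kappa\geqslant\partial\bar\partial\varphi_D$ together with $r\leqslant M$), and then solves for $\beta_\varepsilon$ in the $K_\Omega+L$-valued space and multiplies by $s_D$ at the end.

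Two concrete points in your plan are different from the paper and would need justification. First, you build the twist from $\log(|w|^2+\varepsilon^2)$, whereas the paper uses $\log(r^2+\varepsilon^2)$ with $r=|s_D|_{h_D}=|w|e^{-\varphi_D/2}$. These are not the same: $\sqrt{-1}\partial\bar\partial\log|w|^2=2\pi[D]$ carries no $\varphi_D$ term, so the mechanism by which the hypothesis $\mu\,\partial\bar\partial\kappa\geqslant\partial\bar\partial\varphi_D$ enters the curvature estimate --- equations $(\ref{r2 1})$--$(\ref{kappa})$ in the paper --- would be lost, and you have not indicated how to recover it (your remark about ``combining'' the hypothesis with $[D_0]$ is not a computation). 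Second, you propose to solve $\bar\partial u_\varepsilon=\bar\partial\tilde s_\varepsilon$ in a Hilbert space weighted by $e^{-\kappa}|w|^{-2}$, which is a genuinely singular weight blowing up along $D$. The paper deliberately avoids this: it solves the $\bar\partial$-problem for $K_\Omega+L$-valued forms with the \emph{smooth} weight $e^{-\psi}$ and then twists by $s_D$, precisely so that the Friedrichs--H\"ormander density-in-graph-norm result (Remark $\ref{dens gr}$) applies --- the very issue the introduction flags as the reason Lemma $\ref{peslem}$ is stated with smooth $\varphi_{D_0},\kappa_0$ in the first place. With the singular $|w|^{-2}$ weight, that density step is not automatic, and you note it only as ``a secondary place where care is essential'' without resolving it. So the proposal is a correct roadmap but leaves the two load-bearing steps --- the explicit twist/weight computation giving $C=C(M,\mu)$, and the smoothness of the weights needed for density --- as gaps.
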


Now we deduce Theorem \ref{otsa stein} from Lemma \ref{peslem},
whose proof will be given in next subsection.

\begin{proof}[Proof of Theorem \ref{otsa stein}]
Suppose $\varphi_D=\varphi'+\varphi''$ where $\varphi'$ is
plurisubharmonic and $\varphi''$ is smooth, and suppose $s$ is a
section of $K_D + L|_D$ over $D$ with
$$
\int_D \left< s \right>^2_{h|_D} < \infty.
$$
Let $\varphi'_{\varepsilon} = \varphi'\ast \rho_{\varepsilon}$ and
$\varphi''_{\varepsilon} = \varphi''\ast \rho_{\varepsilon}$ on
the subdomain of $\Omega$ where they can be defined. We choose a
sequence of pseudoconvex domains $\Omega_1 \Subset \cdots \Subset
\Omega_{\nu} \Subset \Omega_{\nu+1} \Subset \cdots$ with smooth
boundary exhausting $Y$ and a decreasing sequence
$\{\varepsilon_{\nu}\}$ converging to zero such that the following
conditions hold:
\begin{enumerate}
\item $R_{\Omega_{\nu}} > \varepsilon_{\nu}$ and
$\kappa_{\varepsilon_{\nu}} = \kappa \ast
\rho_{\varepsilon_{\nu}}$ is a smooth plurisubharmonic function on
$\Omega_{\nu}$.

\item For each $N \in {\bf N}$, the sequences $\{
\kappa_{\varepsilon_{\nu}} \}_{\nu \geqslant N}$ and $\{
\varphi'_{\varepsilon_{\nu}} \}_{\nu \geqslant N}$ decrease to
$\kappa$ and $\varphi'$ on $\Omega_N$, respectively.

\item For each $\nu$, we have
$|s_D|^2e^{-\varphi''_{\varepsilon_\nu}}\leqslant 2
|s_D|^2e^{-\varphi''}$ on $\Omega_\nu$. (Here $|s_D|^2$ is taken
by viewing $s_D$ as a function via the global trivialization of
$D$. Note that on each relatively compact set
$e^{-\varphi''_{\varepsilon}}$ converges to $e^{-\varphi''}$ as
$\varepsilon\to 0$. Therefore for each $\nu$ we only need to
choose $\varepsilon_\nu$ so small that
$|e^{-\varphi''_{\varepsilon_\nu}}-e^{-\varphi''}| \leqslant
\inf\nolimits_{\Omega_\nu} e^{-\varphi''}$ on $\Omega_\nu$.)
\end{enumerate}

Therefore
$$
\sup\nolimits_{\Omega_{\nu}} |s_D|_{\varphi_{\varepsilon_{\nu}}}
\leqslant \sqrt{2}\,{{\rm ess.}\sup}_{\Omega} |s_D|_{\varphi_D}
\leqslant \sqrt{2} M
$$
for each $\nu$. Clearly, $\kappa_{\varepsilon_{\nu}}$ is not
identically $- \infty$ on $D \cap \Omega_{\nu}$.

The curvature condition
$$
\mu \sqrt{-1} \Theta_{\kappa} \geqslant \sqrt{-1} \Theta_{\varphi_D}
$$
implies that there is a plurisubharmonic function $\psi$ such that
$\mu \kappa - \varphi_D = \psi$ a.e.~on $Y$. Then $\mu
\kappa_{\varepsilon_{\nu}} - \varphi_{\varepsilon_{\nu}} = \psi
\ast \rho_{\varepsilon_{\nu}}$ a.e.~on $\Omega_{\nu}$. Since $\psi
\ast \rho_{\varepsilon_{\nu}}$ is plurisubharmonic, we get
$$
\mu \sqrt{-1} \Theta_{\kappa_{\varepsilon_{\nu}}} \geqslant
\sqrt{-1} \Theta_{\varphi_{\varepsilon_{\nu}}}
$$
on $\Omega_{\nu}$. Having assumed the validity of Lemma
\ref{peslem}, we can obtain such an extension
$\widetilde{s}_{\Omega_{\nu}}$. Since $\kappa_{\varepsilon_{\nu}}
\geqslant \kappa$, we obtain
\begin{equation}\label{ineq}
\int_{\Omega_N}
\la\widetilde{s}_{\Omega_{\nu}}\ra_{\varphi_{\varepsilon_{\nu}} +
\kappa_{\varepsilon_{\nu}}}^2 \leqslant C \int_{D \cap \Omega_N}
\la s\ra_{\kappa_{\varepsilon_{\nu}}}^2 \leqslant C \int_{D} \la
s\ra_{\kappa}^2
\end{equation}
for all $\nu \geqslant N$. (Here we abuse the notation by using
weight functions to stand for their associated metrics.) Notice
that the RHS is independent of $n$ (C only depends on $M$ and
$\mu$). By (iii) in $E(Y, \varphi_D, \kappa, s)$, for each
$N\in{\bf N}$, the weight function $\varphi + \kappa$ is bounded
from above on $\Omega_{N+1}$ by a number $M_N>0$. By the
definition of convolution $\varphi_{\varepsilon_{\nu}} +
\kappa_{\varepsilon_{\nu}}$ are bounded from above by the same
number $M_N$ on $\Omega_N$ for sufficiently large $\nu$. By
diagonal method we can select a subsequence
$\{\widetilde{s}_{\Omega_{\nu_k}}\}_{k \in {\bf N}}$ such that
$\{\widetilde{s}_{\Omega_{\nu_k}}\}_{k \geqslant N}$ converges
uniformly on $\Omega_{N}$ for each $N\in {\bf N}$. This way we
obtain a section $\widetilde{s}_{Y} \in H^0(Y, K_{Y} + D +L)$ by
setting $\widetilde{s}_{Y}|_{\Omega_{\nu_N}} = \lim_{k \rightarrow
\infty} \widetilde{s}_{\Omega_{\nu_k}}$. We let $\chi_{\Omega_N}$
be the characteristic function of $\Omega_N$ on $Y$. (\ref{ineq})
can be rephrased as
$$
\int_{Y} \chi_{\Omega_N}\la\widetilde{s}_{\Omega_{\nu}}
\ra_{\varphi_{\varepsilon_{\nu}} + \kappa_{\varepsilon_{\nu}}}^2
\leqslant C \int_{D} \la s\ra_{\kappa}^2.
$$
Applying Fatou's lemma, we obtain the desired inequality
$$
\int_{Y}\la\widetilde{s}_{Y}\ra_{\varphi + \kappa}^2 \leqslant C
\int_{D} \la s\ra_{\kappa}^2.
$$
\end{proof}

The rest of this appendix is devoted to proving Lemma
\ref{peslem}.

\subsection{Proof of Lemma \ref{peslem}}

From now on, we assume that $Y, \Omega$ and $s$ are as in Lemma \ref{peslem}. Let $\rho$ be a defining function of $\Omega$. We follow almost the same argument as Siu's in \cite{siu2}.

\begin{definition}
Let $(z^1,\dots,z^n)$ be local coordinates on some open set $U$
and let $e_{U}$ be a local holomorphic frame of $L$. We put $e^{-
\psi} = h(e_{U}, e_{U})$.
\begin{enumerate}
\item For $u, v$ being $L$-valued $(p,q)$-forms with measurable
coefficients, we set
$$
\left< u, v \right>_{h} := \left< u, v \right>_{g_{\omega} \otimes
h} dV_{\omega}
$$
and $\left|u\right|^2_{h}:=\left< u, u \right>_{h}$ where $g$ is a hermitian metric on
$\Omega$ with $\omega$ being its associated $(1, 1)$-form. We will
sometimes write $\left< u, v \right>_{\psi} = \left< u, v
\right>_{h}$ by abusing the notation. Note that when $(p,q)=(n,0)$
we have $|u|^2_{h}=\la u\ra^2_{h}$ as in Definition \ref{2/m}.

\item Given an $L$-valued $(n, 1)$-form $u$. Locally we have $u =
\sum_\beta u_{\overline{\beta}}\, e_{U} \otimes dz \wedge
d\overline{z}^{\beta}$ where $dz := dz^1 \wedge \cdots \wedge
dz^n$. We define an $(n, 0)$-form
$$
\iota^{\alpha}u := \sum_{\beta} g^{\alpha \overline{\beta}}
u_{\overline{\beta}}\, e_{U} \otimes dz.
$$
For a continuous $(1,1)$-form $\Xi$ which has a local expression
$\sqrt{-1} \xi _{\alpha\overline{\beta}} dz^{\alpha} \wedge
d\overline{z}^{\beta}$. We set $\Xi[u]_h := \sum_{\alpha,\beta} \xi
_{\alpha\overline{\beta}} \left< \iota^{\alpha}u, \iota^{\beta}u
\right>_{\psi}$.
\end{enumerate}
\end{definition}

We also need the following standard result from functional analysis:
\begin{lemma} \label{L2}
Let ${T} : \mathcal{H}_1 \rightarrow \mathcal{H}_2$ and ${S} :
\mathcal{H}_2 \rightarrow \mathcal{H}_3$ be closed, densely
defined operators between Hilbert spaces with $ST=0$, and let
$C>0$ be a constant. Given $g \in \mathcal{H}_2$ with $Sg = 0$.
Then there exists $v\in \mathcal{H}_1$ such that ${T} v = g$ and
$\| v \| \leqslant C$ if and only if
\begin{equation} \label{funal}
|(u,g)|^2 \leqslant C^2(\|{T}^*u\|^2 + \|{S}u\|^2)
\end{equation}
for all $u\in \mathrm{Dom}\,{S} \cap \mathrm{Dom}\,{T}^*$.
\end{lemma}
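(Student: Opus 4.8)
The plan is to prove the two implications separately; the forward direction is an immediate Cauchy--Schwarz estimate, and the reverse direction is the standard functional-analytic argument combining a reduction onto $\ker S$, the Hahn--Banach theorem, the Riesz representation theorem, and the identity $T^{**}=T$ for closed densely defined operators.

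For the ``only if'' direction I would argue directly: if $v\in\mathcal H_1$ satisfies $Tv=g$ and $\|v\|\leqslant C$, then for any $u\in\mathrm{Dom}\,S\cap\mathrm{Dom}\,T^*$ the definition of the adjoint gives $(u,g)=(u,Tv)=(T^*u,v)$, whence $|(u,g)|\leqslant\|T^*u\|\,\|v\|\leqslant C\|T^*u\|\leqslant C\bigl(\|T^*u\|^2+\|Su\|^2\bigr)^{1/2}$, which is (\ref{funal}).

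For the ``if'' direction I would first upgrade (\ref{funal}) to the cleaner estimate $|(u,g)|\leqslant C\|T^*u\|$ valid for \emph{all} $u\in\mathrm{Dom}\,T^*$. Given such a $u$, decompose $u=u_1+u_2$ with $u_1$ in the closed subspace $\ker S\subseteq\mathcal H_2$ (closed since $S$ is closed) and $u_2\in(\ker S)^{\perp}$. The hypothesis $ST=0$ gives $\mathrm{Range}\,T\subseteq\ker S$, hence $(\ker S)^{\perp}\subseteq(\mathrm{Range}\,T)^{\perp}=\ker T^*$; so $u_2\in\ker T^*$, which shows $u_1=u-u_2\in\mathrm{Dom}\,T^*$ with $T^*u_1=T^*u$, and of course $u_1\in\mathrm{Dom}\,S$ with $Su_1=0$. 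Since $g\in\ker S$ while $u_2\perp\ker S$ we get $(u_2,g)=0$, so $(u,g)=(u_1,g)$, and (\ref{funal}) applied to $u_1$ yields $|(u,g)|^2=|(u_1,g)|^2\leqslant C^2\|T^*u_1\|^2=C^2\|T^*u\|^2$.

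With this in hand I would define a linear functional $\ell$ on the subspace $\mathrm{Range}\,T^*\subseteq\mathcal H_1$ by $\ell(T^*u):=(u,g)$: it is well defined because $T^*u=T^*u'$ forces $|(u-u',g)|\leqslant C\|T^*(u-u')\|=0$ by the previous step, and the same inequality gives $\|\ell\|\leqslant C$. Extending $\ell$ to all of $\mathcal H_1$ by Hahn--Banach without increasing its norm and invoking Riesz representation produces $v\in\mathcal H_1$ with $\|v\|\leqslant C$ and $(T^*u,v)=\ell(T^*u)=(u,g)$ for every $u\in\mathrm{Dom}\,T^*$; by definition of the adjoint this means $v\in\mathrm{Dom}\,(T^*)^*$ with $(T^*)^*v=g$, and since $T$ is closed and densely defined $(T^*)^*=T$, so $Tv=g$ and $\|v\|\leqslant C$. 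The one step that genuinely requires care---really the crux of the argument---is the reduction in the third paragraph: one must use \emph{all three} of the closedness of $S$ (so $\ker S$ is closed), the relation $ST=0$ (so $\mathrm{Range}\,T\subseteq\ker S$ and hence $(\ker S)^{\perp}\subseteq\ker T^*$), and the assumption $g\in\ker S$, in order to conclude that replacing $u$ by its $\ker S$-component changes neither $(u,g)$ nor $T^*u$; everything else is routine Hahn--Banach--Riesz packaging together with $T^{**}=T$.
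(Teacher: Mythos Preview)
Your proof is correct and complete; the paper itself does not prove this lemma, introducing it only as ``the following standard result from functional analysis'' and using it as a black box. The argument you give---Cauchy--Schwarz for the forward direction, and for the converse the orthogonal projection onto $\ker S$ followed by Hahn--Banach/Riesz and $T^{**}=T$---is exactly the standard proof one would supply.
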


Let $\varphi$, $\eta$ and $\gamma$ be smooth functions with $\eta,
\gamma > 0$. Set $\eta e^{-\psi} = e^{-\varphi}$. We recall the
twisted Bochner--Kodaira formula (see \cite{siu2}, Proposition
3.4)
\begin{equation} \label{tw bk}
\begin{split}
\int_{\Omega} |\overline{\partial}^*_{\psi}{u}|^2_{\varphi} &+
\int_{\Omega} |\overline{\partial}{u}|^2_{\varphi} =
\int_{\partial{\Omega}} \sqrt{-1} \partial
\overline{\partial}\rho_\Omega[u]_{\varphi} + \int_{\Omega}
|\nabla^{0,1} u|^2_{\varphi} \\
&\quad + \int_{\Omega} \left(\eta \sqrt{-1} \partial
\overline{\partial} \psi - \sqrt{-1} \partial \overline{\partial}
\eta\right)[u]_{\psi} + 2 \mathrm{Re}{\int_{\Omega}
\left<\iota^{\partial{\eta}}u,
\overline{\partial}^*_{\psi}{u}\right>_{\psi}}
\end{split}
\end{equation}
for each $L$-valued smooth $(n, 1)$-form $u$ in
$\mathrm{Dom}\,\overline{\partial}^*_{\psi} \cap
\mathrm{Dom}\,\overline{\partial}$.

\begin{remark}\label{dens gr}
For $\mathcal{E}_c^{n, 1}\left(\overline{\Omega}, L \right)$
being the space of $L$-valued smooth $(n, 1)$-forms with compact
supports, $\mathcal{E}_c^{n, 1} \left(\overline{\Omega}, L\right) \cap \mathrm{Dom}\,\overline{\partial}^*_{\psi} \cap
\mathrm{Dom}\,\overline{\partial}$ is dense in
$\mathrm{Dom}\,\overline{\partial}^*_{\psi} \cap
\mathrm{Dom}\,\overline{\partial}$ with respect to the graph norm.
Therefore, to get a priori estimate from Lemma $\ref{L2}$ we only
need to consider smooth $u$ with compact supports.
\end{remark}

Since $\Omega$ is pseudoconvex, the Levi form of $\rho_\Omega$ is
semipositive at each point of $\partial{\Omega}$. Adding
$\int_{\Omega} \gamma |\overline{\partial}^*_{\psi}{u}|^2_{\psi}$
to both side of (\ref{tw bk}) and using $\eta e^{-\psi} =
e^{-\varphi}$, the twisted Bochner--Kodaira formula becomes
\begin{equation} \label{tw bk2}
\begin{split}
&\int_{\Omega} \left(\eta + \gamma\right)
|\overline{\partial}^*_{\psi}{u}|^2_{\psi} + \int_{\Omega} \eta
|\overline{\partial}{u}|^2_{\psi} \\
&\geqslant \int_{\Omega}
\left(\eta \sqrt{-1} \partial \overline{\partial} \psi - \sqrt{-1}
\partial \overline{\partial} \eta\right)[u]_{\psi} + 2
\mathrm{Re}{\int_{\Omega} \left<\iota^{\partial{\eta}}u,
\overline{\partial}^*_{\psi}{u}\right>_{\psi}} + \int_{\Omega}
\gamma |\overline{\partial}^*_{\psi}{u}|^2_{\psi}.
\end{split}
\end{equation}

We set $r(x) := |s_D (x)|_{h_D}$ for $x \in \Omega$. We first assume that $\tfrac{1}{\mu}
\geqslant 2 M^2$ and let $c$ be a positive constant to be
specified later. We set $N_0 := \max\{1, \sqrt{e} M^{2 c}\}$.
Choose any positive number $A > N_0$. Let
$$
\varepsilon_0 = \sqrt{\left(\tfrac{A}{\sqrt{e}}\right)^{1/c} - M^2}.
$$
For each positive $\varepsilon \leqslant \varepsilon_0$, we let
$$
\eta = \log \frac{A}{(r^2 + \varepsilon^2)^c}
$$
and
$$
\gamma = \frac{2 c^2}{r^2 + \varepsilon^2}.
$$
Then $\eta \geqslant 1/2$ on $\Omega$. Applying the Cauchy--Schwarz
inequality and $\partial{\eta} = - \tfrac{2 c r}{r^2 + \varepsilon^2}
\partial{r}$, we obtain
\begin{align*}
\left|2{\mathrm{Re}}\int_{\Omega} \left< \iota^{\partial{\eta}}u,
\overline{\partial}^*_{\psi}{u}\right>_{\psi} \right| &\leqslant 2
\int_{\Omega} |\iota^{\partial{\eta}}u|_{\psi}
|\overline{\partial}^*_{\psi}{u}|_{\psi}
\\
&=2\int_{\Omega} \frac{2 c r}{r^2 + \varepsilon^2}
|\iota^{\partial{r}}u|_{\psi}
|\overline{\partial}^*_{\psi}{u}|_{\psi}\\
&=\int_{\Omega} \frac{2 r^2}{r^2 + \varepsilon^2}
|\iota^{\partial{r}}u|_{\psi}^2 + \int_{\Omega} \frac{2
c^2}{r^2 + \varepsilon^2}| \overline{\partial}^*_{\psi}{u}|_{\psi}^2\\
&=\int_{\Omega} \frac{2 r^2}{r^2 + \varepsilon^2}
|\iota^{\partial{r}}u|_{\psi}^2 + \int_{\Omega}\gamma
|\overline{\partial}^*_{\psi}{u}|_{\psi}^2.
\end{align*}
From (\ref{tw bk2}) it follows that
\begin{equation} \label{t oti}
\begin{split}
\int_{\Omega} \left(\eta + \gamma\right)
&|\overline{\partial}^*_{\psi}{u}|^2_{\psi} + \int_{\Omega} \eta
|\overline{\partial}{u}|^2_{\psi}
\\
&\geqslant \int_{\Omega}
\left(\eta \sqrt{-1} \partial \overline{\partial} \psi - \sqrt{-1}
\partial \overline{\partial} \eta\right)[u]_{\psi} - \int_{\Omega}
\frac{2 r^2}{r^2 + \varepsilon^2} |\iota^{\partial{r}}u|_{\psi}^2
\end{split}
\end{equation}
Now we compute $-\partial\overline{\partial}{\eta}$. Since $r^2
\partial\overline{\partial}{\log{r^2}} = 2 r
\partial\overline{\partial}{r} - 2 \partial{r} \wedge
\overline{\partial}{r}$, it follows that
\begin{align*}
\sqrt{-1}\partial\overline{\partial}{r^2}&= 2 \sqrt{-1}
\partial{r} \wedge \overline{\partial}{r} + 2 r
\sqrt{-1}\partial\overline{\partial}{r}
\\
&= r^2\sqrt{-1}
\partial\overline{\partial}{\log{r^2}} + 4 \sqrt{-1} \partial{r}
\wedge \overline{\partial}{r}.
\end{align*}
By the Poincar\'e--Lelong formula,
$$
\sqrt{-1} \partial\overline{\partial}{\log{r^2}} = 2 \pi [D] -
\sqrt{-1} \partial\overline{\partial}{\varphi_D},
$$
where $[D]$ is the current of integration over $D$. Hence
\begin{equation} \label{r2 1}
\sqrt{-1} \partial\overline{\partial}{r^2} = 4 \sqrt{-1} \partial{r}
\wedge \overline{\partial}{r} - r^2 \sqrt{-1}
\partial\overline{\partial}{\varphi_D}.
\end{equation}
The term involving the current of integration vanishes since $r^2
\equiv 0$ on $D$.

We let $\eta_0 = - \log{(r^2 + \varepsilon^2)}$. From
$\partial\overline{\partial}{r^2} =
\partial\overline{\partial}{\left(e^{-\eta_0}\right)}$ it follows that
\begin{equation}\label{r2 2}
\begin{split}
\partial\overline{\partial}{r^2} &= e^{-\eta_0} \left(\partial{\eta_0}
\wedge \overline{\partial}{\eta_0} -
\partial\overline{\partial}{\eta_0}\right) \\ &= \frac{4r^2}{r^2 +
\varepsilon^2} \partial{r} \wedge \overline{\partial}{r} -
\big(r^2 + \varepsilon^2\big) \partial\overline{\partial}{\eta_0}.
\end{split}
\end{equation}
Using (\ref{r2 1}), (\ref{r2 2}) and
$\partial\overline{\partial}{\eta} = c
\partial\overline{\partial}{\eta_0}$, we get
\begin{equation} \label{eta}
- \sqrt{-1} \partial\overline{\partial}{\eta} = - \frac{c r^2}{r^2
+ \varepsilon^2} \sqrt{-1} \partial\overline{\partial}{\varphi_D}
+ \frac{4c \varepsilon^2}{\left(r^2 + \varepsilon^2\right)^2}
\sqrt{-1} \partial{r} \wedge \overline{\partial}{r}.
\end{equation}

Choose $\psi = \kappa + \tfrac{r^2}{2 \mu M^2}$. Note that $\sqrt{-1}
\partial \overline{\partial} \psi \geqslant 0$. Using ($\ref{r2 1}$) and
($\ref{eta}$) we get
\begin{equation}\label{phi}
\begin{split}
\eta &\sqrt{-1} \partial\overline{\partial}{\psi} - \sqrt{-1}
\partial\overline{\partial}{\eta} =  \eta \sqrt{-1}
\partial\overline{\partial}{\kappa}
\\
& -\eta \left(\frac{r^2}{2 \mu M^2} + \frac{c r^2}{\eta \left(r ^2
+ \varepsilon^2\right)} \right) \sqrt{-1}
\partial\overline{\partial}{\varphi_D}+ \left(\frac{4 \eta}{2 \mu
M^2} + \frac{4c \varepsilon^2}{\left(r^2 +
\varepsilon^2\right)^2}\right) \sqrt{-1} \partial{r} \wedge
\overline{\partial}{r}.
\end{split}
\end{equation}
Since $\eta \geqslant 1/2$ we get
\begin{equation}\label{cinq}
\frac{r^2}{2 \mu M^2} + \frac{c r^2}{\eta \left(r ^2 +
\varepsilon^2\right)} \leqslant \frac{1}{2 \mu} + 2 c.
\end{equation}
We now choose $c$ so that $c \leqslant \tfrac{1}{4 \mu}$. It
follows that
\begin{equation}\label{kappa}
\sqrt{-1} \partial\overline{\partial}{\kappa} - \left(\frac{r^2}{2
\mu M^2} + \frac{c r^2}{\eta \left(r ^2 + \varepsilon^2\right)}
\right) \sqrt{-1} \partial\overline{\partial}{\varphi_D} \geqslant
0
\end{equation}
where the inequality is from (\ref{cinq}) and the curvature hypothesis.

From ($\ref{t oti}$), ($\ref{phi}$), ($\ref{kappa}$) and $\tfrac{4
\eta}{2 \mu M^2} \geqslant 2 \geqslant \tfrac{2 r^2}{r^2 +
\varepsilon^2}$ we obtain
\begin{equation} \label{pri inq}
\int_{\Omega} \left(\eta + \gamma\right)
|\overline{\partial}^*_{\psi}{u}|^2_{\psi} + \int_{\Omega} \eta
|\overline{\partial}{u}|^2_{\psi} \geqslant \int_{\Omega} \frac{4
c \varepsilon^2}{\left(r^2 + \varepsilon^2\right)^2}
|\iota^{\partial{r}}u|_{\psi}^2.
\end{equation}

We now consider the modified $\overline{\partial}$ operators ${T}$
and ${S}$ defined by
\begin{center}
${T} u = \overline{\partial}\left({\sqrt{\eta + \gamma} u}\right)
\qquad $ and $\qquad {S} u = \sqrt{\eta} (\overline{\partial}u )$,
\end{center}
respectively. They are densely defined and ${S} \circ {T} = 0$,
and we can rewrite (\ref{pri inq}) to obtain the following lemma.
(See Remark $\ref{dens gr}$.)

\begin{lemma} \label{funal inq}
For each $L$-valued $(n, 1)$-form $u$ in $\mathrm{Dom}\,{S} \cap
\mathrm{Dom}\,{T}^*$ we have
\begin{equation} \label{TSeq}
\| {T}^*u\|_{\Omega, \psi}^2 + \| {S} u\|_{\Omega, \psi}^2
\geqslant \int_{\Omega} \frac{4 c \varepsilon^2}{\left(r^2 +
\varepsilon^2\right)^2} |\iota^{\partial{r}}u|_{\psi}^2.
\end{equation}
Here $\|\cdot\|_{\Omega, \psi}$ means the $L^2$ norm for $\Omega$ with
respect to the weight function $e^{-\psi}$.
\end{lemma}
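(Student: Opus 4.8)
The plan is to recognize (\ref{TSeq}) as a direct re-packaging of the a priori estimate (\ref{pri inq}). Write $T=\overline{\partial}\circ M_{\sqrt{\eta+\gamma}}$ and $S=M_{\sqrt{\eta}}\circ\overline{\partial}$, where $M_f$ denotes multiplication by $f$ acting between the weighted $L^2$-spaces of $L$-valued forms on $\Omega$ with weight $e^{-\psi}$. Since $\Omega\Subset Y$ and $r^2+\varepsilon^2\geqslant\varepsilon^2>0$, the functions $\eta$, $\gamma$, and $\eta+\gamma$ are smooth on $\Omega$ and bounded above and below by positive constants there; hence $M_{\sqrt{\eta+\gamma}}$ and $M_{\sqrt{\eta}}$ are bounded, everywhere-defined, self-adjoint operators with bounded inverses. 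First I would compute $T^{*}$: using the standard fact that $(AB)^{*}=B^{*}A^{*}$ when $B$ is bounded, together with the self-adjointness of $M_{\sqrt{\eta+\gamma}}$, one gets $T^{*}=M_{\sqrt{\eta+\gamma}}\circ\overline{\partial}^{*}_{\psi}$ with $\mathrm{Dom}\,T^{*}=\mathrm{Dom}\,\overline{\partial}^{*}_{\psi}$, and likewise $\mathrm{Dom}\,S=\mathrm{Dom}\,\overline{\partial}$. Consequently, for every $u\in\mathrm{Dom}\,S\cap\mathrm{Dom}\,T^{*}=\mathrm{Dom}\,\overline{\partial}\cap\mathrm{Dom}\,\overline{\partial}^{*}_{\psi}$ one has
$$
\|T^{*}u\|_{\Omega,\psi}^{2}=\int_{\Omega}(\eta+\gamma)\,|\overline{\partial}^{*}_{\psi}u|_{\psi}^{2}
\qquad\text{and}\qquad
\|Su\|_{\Omega,\psi}^{2}=\int_{\Omega}\eta\,|\overline{\partial}u|_{\psi}^{2},
$$
so the left-hand side of (\ref{TSeq}) is exactly the left-hand side of (\ref{pri inq}).

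Next I would invoke (\ref{pri inq}) itself. It, together with the twisted Bochner--Kodaira identity (\ref{tw bk}) preceding it, was established for smooth $L$-valued $(n,1)$-forms in $\mathrm{Dom}\,\overline{\partial}^{*}_{\psi}\cap\mathrm{Dom}\,\overline{\partial}$, in particular for $u\in\mathcal{E}_{c}^{n,1}(\overline{\Omega},L)\cap\mathrm{Dom}\,\overline{\partial}^{*}_{\psi}\cap\mathrm{Dom}\,\overline{\partial}$; the boundary term in (\ref{tw bk}) enters only through its sign, which is nonnegative by the pseudoconvexity of $\Omega$. Substituting the two displayed identities into (\ref{pri inq}) thus gives (\ref{TSeq}) for all such $u$. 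Finally I would extend it to an arbitrary $u\in\mathrm{Dom}\,S\cap\mathrm{Dom}\,T^{*}$ by a density argument: by Remark \ref{dens gr} the compactly supported smooth forms above are dense in $\mathrm{Dom}\,\overline{\partial}^{*}_{\psi}\cap\mathrm{Dom}\,\overline{\partial}$ with respect to the graph norm, and since $\eta,\gamma,\eta+\gamma$ are bounded above and below by positive constants, that norm is equivalent to $\|u\|_{\Omega,\psi}^{2}+\|T^{*}u\|_{\Omega,\psi}^{2}+\|Su\|_{\Omega,\psi}^{2}$; both sides of (\ref{TSeq}) are continuous for this norm---the right-hand side because $\tfrac{4c\varepsilon^{2}}{(r^{2}+\varepsilon^{2})^{2}}|\iota^{\partial r}u|_{\psi}^{2}$ is bounded pointwise on $\overline{\Omega}$ by a constant (depending on $c,\varepsilon$ and the fixed geometry) times $|u|_{\psi}^{2}$, using that $\partial r=\tfrac{\partial(r^{2})}{2r}$ stays bounded because $r^{2}$ is smooth and vanishes to order two along $D$. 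The inequality therefore survives the limit.

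I expect the analytic content to be negligible here---(\ref{pri inq}) already carries it---so the only genuine work is the functional-analytic bookkeeping: verifying that $T^{*}=M_{\sqrt{\eta+\gamma}}\,\overline{\partial}^{*}_{\psi}$ is an equality of operators (domains included) rather than just an inclusion, that $\mathrm{Dom}\,S\cap\mathrm{Dom}\,T^{*}$ really coincides with the space appearing in Remark \ref{dens gr}, and that the graph norm of $(S,T)$ is comparable to the usual one so that the density statement transfers.
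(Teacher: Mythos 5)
Your proposal is correct and takes essentially the same route the paper does implicitly: you identify $\|T^{*}u\|^{2}_{\Omega,\psi}+\|Su\|^{2}_{\Omega,\psi}$ with the left-hand side of (\ref{pri inq}) via the computation $T^{*}=\sqrt{\eta+\gamma}\,\overline{\partial}^{*}_{\psi}$ (using that multiplication by $\sqrt{\eta+\gamma}$ and $\sqrt{\eta}$ are bounded, invertible, and self-adjoint on $\Omega$), and then pass from smooth compactly supported forms to the full domain by the graph-norm density of Remark \ref{dens gr}. You simply make explicit the adjoint computation and the continuity/density bookkeeping that the paper compresses into ``we can rewrite (\ref{pri inq}) to obtain the following lemma. (See Remark \ref{dens gr}.)''
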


Since $Y$ is Stein, there exists a $(D + L)$-valued $n$-form
$\widetilde{s}_0$ on $Y$ such that $\widetilde{s}_0|_{D} = s
\wedge ds_D$. Choose any number $0 < \delta < 1$. Let $\varrho \in
C^{\infty}([0, + \infty))$ be a cut-off function with $0 \leqslant
\varrho(x) \leqslant 1$ so that $\varrho$ is identically 1 on $[0,
\tfrac{\delta}{2}]$ and
\begin{equation}\label{cut}
\mathrm{supp}\,\varrho \subseteq[0,1] \quad \text{and}
\quad\sup|\varrho^{\prime}| \leqslant 1 + \delta.
\end{equation}
Let $\varrho_{\varepsilon} :=
\varrho\left(\tfrac{r^2}{\varepsilon^2}\right)$ and let
$$
\alpha_{\varepsilon} := \frac{2 r}{\varepsilon^2}
\,\varrho^{\prime} \left(\frac{r^2}{\varepsilon^2}\right)
\overline{\partial}{r} \wedge \left(s_D^{-1} \otimes
\widetilde{s}_0\right).
$$
Note that $\alpha_{\varepsilon}$ is smooth because the singularity
of $\overline{\partial}{r} \wedge \left(s_D^{-1} \otimes
\widetilde{s}_0\right)$ lies in the zero locus $D$ of $s_D$ and
$\varrho^{\prime} \left(\tfrac{r^2}{\varepsilon^2}\right)$ equals
zero in the tubular neighborhood $r^2 < \tfrac{\delta}{2}
\varepsilon^2$. Then we have
\begin{align*}
|(u, \alpha_{\varepsilon})_{\Omega,\psi}|^2 &= \left(\int_{\Omega}
\left| \left< u, \frac{2r}{\varepsilon^2} \,\varrho^{\prime}
\left(\frac{r^2}{\varepsilon^2}\right) \overline{\partial}{r}
\wedge \left(s_D^{-1} \otimes
\widetilde{s}_0\right)\right>_{\psi} \right| \right)^2 \\
&=\left(\int_{\Omega} \left|\left<\iota^{\partial{r}}u,
\frac{2r}{\varepsilon^2}
\,\varrho^{\prime}\left(\frac{r^2}{\varepsilon^2}\right) s_D^{-1}
\otimes \widetilde{s}_0\right>_{\psi}\right| \right)^2 \\
&\leqslant\left(\int_{\Omega} 2 |\iota^{\partial{r}}u|_{\psi}
\left|\frac{r}{\varepsilon^2}
\,\varrho^{\prime}\left(\frac{r^2}{\varepsilon^2}\right) s_D^{-1}
\otimes
\widetilde{s}_0\right|_{\psi} \right)^2 \\
&\leqslant\left(\int_{\Omega} \left|\frac{r}{\varepsilon^2}
\,\varrho^{\prime}\left(\frac{r^2}{\varepsilon^2}\right) s_D^{-1}
\otimes \widetilde{s}_0\right|_{\psi}^2 \frac{(r^2 +
\varepsilon^2)^2}{c \varepsilon^2} \right) \\
&\quad \times
\left(\int_{\Omega} \frac{4 c \varepsilon^2}{\left(r^2 +
\varepsilon^2\right)^2} |\iota^{\partial{r}}u|_{\psi}^2\right)\\
&\leqslant C_{\varepsilon} \left(\|T^*u\|_{\Omega, \psi}^2 +
\|Su\|_{\Omega,\psi}^2\right),
\end{align*}
where the last inequality is from Lemma \ref{funal inq}, and we
have used the notation
$$
C_{\varepsilon} := \int_{\Omega} \left|\frac{r}{\varepsilon^2}
\,\varrho^{\prime}\left(\frac{r^2}{\varepsilon^2}\right) s_D^{-1}
\otimes \widetilde{s}_0\right|_{\psi}^2 \frac{(r^2 +
\varepsilon^2)^2}{c \varepsilon^2}.
$$
By Lemma ($\ref{L2}$), we can solve the equation ${T}
\beta_{\varepsilon} = \bar{\partial}\left(\sqrt{\eta + \gamma}
\beta_{\varepsilon}\right)= \alpha_{\varepsilon}$ such that
\begin{equation} \label{heps}
\int_{\Omega} |\beta_{\varepsilon}|^2_{\psi} \leqslant
C_{\varepsilon}.
\end{equation}
\subsection{Estimate the constant $C_{\varepsilon}$}
Now we estimate the constant $C_{\varepsilon}$. Take $y \in Y$ an
arbitrary point and $(z^j = x^j +i y^j)$ local coordinates on a
open set $U_y$ centered at $y$, and let $e_L$ (respectively,
$e_D$) be local frames of $L$ (respectively, $D$) such that the
following conditions holds:
\begin{enumerate}
\item $s_D = z^n \otimes e_D$ on $U_y$;

\item If $\zeta = \xi + i \tau := z^n e^{-\frac{\varphi_D}{2}}$,
then $(x^1, y^1, \cdots, \xi, \tau)$ forms a coordinate system;

\item $U_y = P_{n-1} \times \{r < \varepsilon\}$ where $P_{n-1}$
is a $(n-1)$-dimensional polydisc;

\item We have
\begin{center}
$\widetilde{s}_0 = \widetilde{\sigma}_{U_y} e_D \otimes e_L
\otimes dz^1 \wedge \cdots \wedge dz^n$ and $s = \sigma_{U_y} e_L
\otimes dz^1 \wedge \cdots \wedge dz^{n-1}$
\end{center}
on $U_y$. (Note that $\widetilde s_0$ is defined not only on
$\Omega$ but on $Y$.)
\end{enumerate}
Since $\widetilde{s}_0|_D = s \wedge ds_D$, we get
$$
\widetilde{\sigma}_{U_y}(z^1, \cdots, z^{n-1}, 0) = \sigma_{U_y}(z^1,
\cdots, z^{n-1})
$$
on $U_y$. Choose a partition of unity $\{\rho_j\}$ subordinate to
a finite subcover $\{U_j\}\subset\{U_y\}_{y\in\overline\Omega}$ of
$\overline\Omega$. Then
\begin{align*}
C_{\varepsilon}&=\int_{\Omega} \frac{(r^2 + \varepsilon^2)^2}{c
\varepsilon^6} r^2
\Big|\varrho^{\prime}\left(\frac{r^2}{\varepsilon^2}\right)\Big|^2
| s_D^{-1} \otimes \widetilde{s}_0|_{\psi}^2 \\
&\leqslant\frac{(1 + \delta)^2}{c} \int_{\Omega \cap
\{\sqrt{\frac{\delta}{2}} \varepsilon \leqslant r \leqslant
\varepsilon\}} \frac{(r^2 +
\varepsilon^2)^2}{\varepsilon^6} r^2 | s_D^{-1} \otimes
\widetilde{s}_0|_{\psi}^2.
\end{align*}
For each $j$ we let $V_j := U_j \cap \Omega \cap
\{\sqrt{\tfrac{\delta}{2}} \varepsilon \leqslant r \leqslant
\varepsilon\}$ and
\begin{align*}
I_j&=\int_{V_j} \rho_j \frac{(r^2 +
\varepsilon^2)^2}{\varepsilon^6} r^2 | s_D^{-1} \otimes
\widetilde{s}_0|_{\psi}^2\\
&=\int_{V_j} \rho_j\frac{(r^2 + \varepsilon^2)^2}{\varepsilon^6}
\big|\widetilde{\sigma}_{U_j}\big|^2 e^{- \kappa - \tfrac{r^2}{2
\mu M^2}} e^{- \varphi_D} dx^1 \wedge dy^1 \wedge \cdots \wedge
dy^n.
\end{align*}
Therefore
$$
C_{\varepsilon} \leqslant \frac{(1 + \delta)^2}{c}
\sum\nolimits_j I_j.
$$
A direct computation yields
\begin{align*}
&e^{- \varphi_D} dx^1 \wedge dy^1 \wedge \cdots \wedge dy^n \\
&=
\left(1 + O(r)\right) dx^1 \wedge dy^1 \wedge \cdots \wedge dy^{n -
1} \wedge d\xi \wedge d\tau.
\end{align*}

Let $f := \rho_j \big|\widetilde{\sigma}_{U_j}\big|^2
e^{-\kappa}$. Then
\begin{align*}
I_j&\leqslant\int_{V_j} \rho_j
\big|\widetilde{\sigma}_{U_j}\big|^2 e^{-\kappa} \frac{(r^2 +
\varepsilon^2)^2}{\varepsilon^6} (1 + O(r)) r
\,dx^1 dy^1 \cdots dy^{n - 1} dr\, d\theta \\
&= \mathrm{I}^{(j)} +
\mathrm{II}^{(j)} + \mathrm{III}^{(j)},
\end{align*}
where
$$
\mathrm{I}^{(j)} = \int_{P_{n-1} \cap D} f(z^1, \cdots, z^{n-1},
0) \,dx^1 dy^1 \cdots dy^{n-1} \bigg(2 \pi \int_0^{\varepsilon}
\frac{(r^2 + \varepsilon^2)^2}{\varepsilon^6} r \, dr \bigg),
$$
$$
\mathrm{II}^{(j)} = \int_{V_j} (f(z^1, \cdots, z^n) - f(z^1,
\cdots, 0)) \frac{(r^2 + \varepsilon^2)^2}{\varepsilon^6} r \,dx^1
dy^1 \cdots dr\, d\theta,
$$
$$
\mathrm{III}^{(j)} = \int_{V_j} f(z^1, \cdots, z^n) \frac{(r^2 +
\varepsilon^2)^2}{\varepsilon^6} O(r) r \,dx^1 dy^1 \cdots dr\,
d\theta.
$$
Note that the term $f(z^1, \cdots, z^n) - f(z^1, \cdots, 0)$ in
$\mathrm{II}^{(j)}$ produces a factor $r$. Thus
$\mathrm{II}^{(j)}$ and $\mathrm{III}^{(j)}$ converge to zero as
$\varepsilon$ tend $0^+$. Then
\begin{equation} \label{ceps}
\begin{split}
  \limsup_{\varepsilon \rightarrow 0^+} C_{\varepsilon} & \leqslant
\frac{(1 + \delta)^2}{c} \sum\nolimits_j \mathrm{I}^{(j)} \\
&= \frac{2 \pi (1 + \delta)^2}{c} \Big(\int_{\Omega \cap D} \la
s\ra_h^2 \Big) \limsup_{\varepsilon \rightarrow 0^+}
\int_0^{\varepsilon} \frac{(r^2 + \varepsilon^2)^2}{\varepsilon^6}
r \, dr\
\\
&= \frac{7 \pi}{3 c} (1 + \delta)^2 \int_{\Omega \cap D} \la
s\ra_h^2.
\end{split}
\end{equation}

\subsection{Extension and the $L^2$ norm bound}
Now we set
$$
\widetilde{S}_{\varepsilon} := \varrho_{\varepsilon} \widetilde{s}_0 -
\sqrt{\eta + \gamma} \left(s_D \otimes \beta_{\varepsilon}\right).
$$
Then $\widetilde{S}_{\varepsilon}$ is a holomorphic section by
construction and
\begin{center}
$\int_{\Omega}  |\varrho_{\varepsilon} \widetilde{s}_0|_{h_D
\otimes h}^2 \rightarrow 0$ as $\varepsilon \rightarrow 0^+$,
\end{center}
because $\widetilde{s}_0$ is smooth in the relatively compact set
$\Omega$ and the support of $\varrho_{\varepsilon}
\widetilde{s}_0$ approaches a set of measure zero in $\Omega$ as
$\varepsilon \rightarrow 0^+$.

The supremum norm of $r \sqrt{\eta + \gamma}$ on $\Omega \subseteq
\{r \leqslant M\}$ is no more than the square root of
\begin{equation*}
\sup_{0 < x \leqslant M} x^2 \left(\log A + c \log\frac{1}{x^2 +
\varepsilon^2} + \frac{2 c^2}{x^2 + \varepsilon^2}\right)
\leqslant M^2 \log A + \frac{c}{e} + 2 c^2,
\end{equation*}
because the maximum of $y \log\tfrac{1}{y}$ on $(0, + \infty)$
occurs at $y = \tfrac{1}{e}$.

Take $A \rightarrow N_0^+$, $\delta \rightarrow 0^+$. By using
(\ref{ceps}) and
\begin{equation*}
\int_{\Omega} |\beta_{\varepsilon}|^2_{h} \leqslant e^{\frac{1}{2
\mu}} C_{\varepsilon}
\end{equation*}
from (\ref{heps}) and $r \leqslant M$, we get
$$
\limsup_{\varepsilon\rightarrow 0^+} \int_{\Omega}
\la\widetilde{S}_{\varepsilon}\ra_{h_D \otimes h}^2 \leqslant C_0
\int_{\Omega} \la s\ra_h^2
$$
where $C_0 = \tfrac{7 \pi}{3} e^{\frac{1}{2 \mu}}
\sqrt{\left(\tfrac{M}{c}\right)^2 \log N_0 + \tfrac{1}{ce} + 2}$.
Then the limit $\widetilde{s}_{\Omega}$ (up to subsequences) is an
$D + L$-valued holomorphic $n$-form on $\Omega$ whose restriction
to $D$ is $s \wedge ds_D$ with the following estimate
$$
\int_\Omega \la\widetilde{s}_{\Omega}\ra_{h_D \otimes h}^2
\leqslant C_0 \int_D \la s\ra_h^2.
$$

If $\tfrac{1}{\mu} < 2 M^2$, we replace the metric $h_D$ by the
metric $h_D^{\prime} := \tfrac{1}{2 \mu M^2} h_D$. Then
$\sup\nolimits_\Omega |T|_{h_D^{\prime}} = \tfrac{1}{\sqrt{2
\mu}}$. This finishes the proof of Theorem \ref{otsa stein}.

\begin{remark}
In the statement of Theorem \ref{otsa stein} the requirement that
$D$ and $L$ being trivial bundles is used only for smoothing the
metrics on them. Therefore the same argument shows that $E\big(\Omega,
(D,h_D), (L,h), s\big)$ holds if $\Omega$ is Stein, $(D,h_D)$ and
$(L,h)$ are smoothly metrized, and $s\in H^0(D,K_D+L|_D)$.
\end{remark}

\section{Appendix 2}
The following lemma about generalized multiplication maps is used
in \ref{amp} to select the auxiliary ample divisor to fulfill
$(A_3)$. For the convenience of the readers we give a proof in
this appendix. Some of its special cases are well known in
\cite{laza}, \cite{vieh}. The proof presented below is a
modification of their arguments.

\begin{lemma} \label{s.lem}
Let $D$ and $E$ be ample Cartier divisors on a scheme $X$. For any
coherent sheaves $\mathscr{F}_1$ and $\mathscr{F}_2$ on $X$, there
is a positive integer $m_0=m_0(D,E,\mathscr{F}_1,\mathscr{F}_2)$
such that
\begin{multline*}
H^0\big(X, \mathscr{F}_1 \otimes \mathscr{O}_X(aD)\big)\otimes
H^0\big(X, \mathscr{F}_2 \otimes \mathscr{O}_X(bE)\big)\rightarrow \\
H^0\big(X, \mathscr{F}_1 \otimes \mathscr{F}_2\otimes
\mathscr{O}_X(aD+bE)\big)
\end{multline*}
is surjective for all $a,b\geqslant m_0$.
\end{lemma}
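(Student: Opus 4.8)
The plan is to reduce the statement to two standard facts: Serre vanishing for twists by ample divisors, and Castelnuovo--Mumford regularity used to control global generation of the tensor factors. First I would fix an auxiliary very ample divisor $H$ on $X$ so that $D$ and $E$ dominate it in the sense that $aD-H$ and $bE-H$ are (for $a,b$ large) linear combinations of ample divisors, hence nef and big; this lets one replace $aD$ and $bE$ by twists of the form $(\text{very ample})+(\text{nef})$, to which Mumford's regularity machinery applies uniformly. Alternatively, and more cleanly, I would work directly: choose $m_0$ large enough that for all $a\geqslant m_0$ the sheaf $\mathscr{F}_1\otimes\mathscr{O}_X(aD)$ is globally generated and $0$-regular with respect to some fixed very ample $\mathscr{O}_X(1)$ (possible by Serre vanishing plus Fujita-type boundedness, since $D$ is ample), and similarly for $\mathscr{F}_2\otimes\mathscr{O}_X(bE)$ with $b\geqslant m_0$.

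The key step is then the standard multiplication-map argument. Write $\mathscr{G}_1 := \mathscr{F}_1\otimes\mathscr{O}_X(aD)$ and $\mathscr{G}_2 := \mathscr{F}_2\otimes\mathscr{O}_X(bE)$, so both are globally generated. Choosing a surjection $\mathscr{O}_X^{\oplus p}\twoheadrightarrow\mathscr{G}_1$ induced by global sections, one gets a short exact sequence $0\to\mathscr{M}\to\mathscr{O}_X^{\oplus p}\to\mathscr{G}_1\to 0$. Tensoring with $\mathscr{G}_2$ and taking cohomology, surjectivity of
$$
H^0(X,\mathscr{O}_X^{\oplus p}\otimes\mathscr{G}_2)\longrightarrow H^0(X,\mathscr{G}_1\otimes\mathscr{G}_2)
$$
follows once $H^1(X,\mathscr{M}\otimes\mathscr{G}_2)=0$. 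Since $H^0(X,\mathscr{O}_X^{\oplus p}\otimes\mathscr{G}_2)=H^0(X,\mathscr{G}_2)^{\oplus p}$ factors through $H^0(X,\mathscr{G}_1)\otimes H^0(X,\mathscr{G}_2)$, this gives exactly the desired surjectivity of the multiplication map. The vanishing $H^1(X,\mathscr{M}\otimes\mathscr{G}_2)=0$ is where regularity enters: $\mathscr{M}$ is a subsheaf of a free sheaf, hence its regularity is bounded in terms of that of $\mathscr{G}_1$ (equivalently of $\mathscr{F}_1$ and $D$), and $\mathscr{G}_2$ can be made as positive as needed by enlarging $m_0$, so the tensor product is $1$-regular, forcing the $H^1$ to vanish.

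The main obstacle is making the choice of $m_0$ genuinely uniform and independent of $a$ and $b$ individually — i.e.\ ensuring that a single threshold works for the whole range $a,b\geqslant m_0$ rather than growing with $a$ or $b$. This is handled by invoking Fujita's theorem (or the effective Serre vanishing via regularity): for a coherent sheaf $\mathscr{F}$ and an ample Cartier divisor $D$ on a projective scheme, there is a single integer $m_0$ such that $\mathscr{F}\otimes\mathscr{O}_X(aD)$ is globally generated and has vanishing higher cohomology after any further ample twist, for \emph{all} $a\geqslant m_0$; the point is that the higher cohomology groups $H^i(X,\mathscr{F}\otimes\mathscr{O}_X(aD))$ stabilize to zero. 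Combining the two thresholds (one for $\mathscr{F}_1,D$ and one for $\mathscr{F}_2,E$, plus a correction from the bounded regularity of the syzygy sheaf $\mathscr{M}$) and taking their maximum yields the required $m_0=m_0(D,E,\mathscr{F}_1,\mathscr{F}_2)$, and one observes the construction never referred to $a$ or $b$ except through the inequalities $a,b\geqslant m_0$. This completes the argument.
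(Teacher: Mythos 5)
Your approach is genuinely different from the paper's. The paper works on $X\times X$: it tensors the diagonal exact sequence $0\to\mathscr I_\Delta\to\mathscr O_{X\times X}\to\Delta_*\mathscr O_X\to 0$ with $\mathscr G=p_1^*\mathscr F_1\otimes p_2^*\mathscr F_2$, twists by $aD\boxplus bE$, and then kills $H^1\big((\mathscr G\otimes\mathscr I_\Delta)(a,b)\big)$ by choosing a \emph{fixed} resolution of $\mathscr G\otimes\mathscr I_\Delta$ by sums of $\mathscr O_{X\times X}(-p_i,-p_i)$ and applying dimension shifting plus ordinary Serre vanishing. The point of the $X\times X$ trick is precisely that the resolution being twisted does not change with $a,b$, so no regularity machinery is needed. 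You instead work directly on $X$ with Castelnuovo--Mumford regularity and the $a$-dependent evaluation sequence for $\mathscr G_1=\mathscr F_1(aD)$; this is a legitimate alternative, and it buys a more quantitative statement, but it requires the regularity apparatus (or Fujita-type vanishing) rather than plain Serre vanishing.

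However, there is a genuine gap: the reduction to locally free $\mathscr F_1,\mathscr F_2$ (which the paper also performs, at the end of its proof) is omitted, and for your route it is essential in two places, not just a convenience. First, without $\mathscr G_1$ flat, tensoring $0\to\mathscr M\to H^0(\mathscr G_1)\otimes\mathscr O_X\to\mathscr G_1\to 0$ with $\mathscr G_2$ is only right exact; the actual kernel $\mathscr N$ of $H^0(\mathscr G_1)\otimes\mathscr G_2\to\mathscr G_1\otimes\mathscr G_2$ is a quotient of $\mathscr M\otimes\mathscr G_2$ (the kernel of the quotient being a $\mathscr{T}or_1$ term), and $H^1(\mathscr M\otimes\mathscr G_2)=0$ does not by itself give $H^1(\mathscr N)=0$ --- the obstruction sits in an $H^2$. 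Second, the product bound $\mathrm{reg}(\mathscr M\otimes\mathscr G_2)\leqslant\mathrm{reg}(\mathscr M)+\mathrm{reg}(\mathscr G_2)$ that you invoke requires one of the two factors to be locally free (Lazarsfeld, Positivity I, Prop.\ 1.8.9); $\mathscr M$ is locally free exactly when $\mathscr G_1$ is (a kernel of a surjection of locally free sheaves is locally free over a local ring), so this step also hinges on the reduction. A smaller point: ``$\mathscr M$ is a subsheaf of a free sheaf, hence its regularity is bounded'' is a non sequitur --- subsheaves of free sheaves can have arbitrarily high regularity. The correct justification is the short-exact-sequence chain lemma applied to the evaluation sequence, which gives $\mathrm{reg}(\mathscr M)\leqslant 1$ once $\mathscr G_1$ is $0$-regular. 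If you first reduce to locally free $\mathscr F_i$ as the paper does, your argument can be completed; as written it does not close.
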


\begin{proof}
First we assume that $\mathscr{F}_1$ and $\mathscr{F}_2$ are
locally free. Consider on $X\times X$ the exact sequence
\begin{equation}\label{diag}
0\longrightarrow \mathscr{I}_{\Delta} \longrightarrow
\mathscr{O}_{X\times X} \longrightarrow \Delta_{\ast}\mathscr{O}_X
\longrightarrow 0
\end{equation}
where $\Delta$ is the diagonal morphism. Let $p_1$ and $p_2$ be
the two projections and
$$aD\boxplus bE=p_1^{\ast}(aD)\otimes p_2^{\ast}(bE)$$
and
$$
\mathscr{G}=p_1^{\ast}\mathscr{F}_1\otimes p_2^{\ast}\mathscr{F}_2.
$$
By tensoring $(\ref{diag})$ with $\mathscr{G}$, we get
\begin{equation*}
0\longrightarrow \mathscr{G}\otimes\mathscr{I}_{\Delta}
\longrightarrow \mathscr{G} \longrightarrow
\mathscr{G}\otimes\Delta_{\ast}\mathscr{O}_X \longrightarrow 0.
\end{equation*}
Twisting by $\mathscr{O}_{X\times X}(aD\boxplus bE)$ and taking
cohomology, we obtain an exact sequence
\begin{multline*}
H^0\big(X\times X, \mathscr{G}(a,b)\big) \rightarrow
H^0\big(X\times X,
(\mathscr{G}\otimes\Delta_{\ast}\mathscr{O}_X)(a,b)\big)
\rightarrow\\
H^1\big(X\times X,
(\mathscr{G}\otimes\mathscr{I}_{\Delta})(a,b)\big)
\end{multline*}
where we use $(a,b)$ to denote the twisting
$\otimes\mathscr{O}_{X\times X}(aD\boxplus bE)$. It suffices to
verify that there is a positive integer $m_0$ such that
\begin{equation} \label{vani}
H^1\big(X\times X,
\big(\mathscr{G}\otimes\mathscr{I}_{\Delta}\big)(a,b)\big)=0.
\end{equation}
for $a,b\geqslant m_0$. Indeed, there is an isomorphism of
cohomology groups
\begin{align*}
H^0\big(X\times X, \mathscr{G}(a,b)&\big)\cong \\
&H^0\big(X,\mathscr{F}_1\otimes\mathscr{O}_X(aD)\big)\otimes
H^0\big(X,\mathscr{F}_2\otimes\mathscr{O}_X(bE)\big).
\end{align*}
By the projection formula,
$$
\big(\mathscr{G}\otimes\Delta_{\ast}\mathscr{O}_X\big)(a,b)\cong
\Delta_{\ast}\big(\Delta^{\ast}\mathscr{G}(a,b)\big).
$$
By definition of the diagonal morphism we have $p_i\Delta=id_X$, hence
$$
\Delta^{\ast}\big(p_1^{\ast}\mathscr{F}_1\otimes
p_2^{\ast}\mathscr{F}_2\otimes p_1^{\ast}(aD)\otimes
p_2^{\ast}(bE)\big)\cong\mathscr{F}_1 \otimes
\mathscr{F}_2\otimes\mathscr{O}_X(aD+bE).
$$
Therefore the cohomology group
$$
H^0\big(X\times X,
(\mathscr{G}\otimes\Delta_{\ast}\mathscr{O}_X)(a,b)\big)\cong
H^0\big(X\times X, \Delta_*\Delta^*\mathscr{G}(a,b)\big)
$$
is isomorphic to
$$
H^0\big(X,\Delta^{\ast}\mathscr{G}(a,b)\big) \cong
H^0\big(X,\mathscr{F}_1\otimes\mathscr{F}_2 \otimes
\mathscr{O}_X(aD+bE)\big)
$$
as desired.

Now we prove (\ref{vani}). To this end, we use the ample divisor
$aD\boxplus bE$ to construct a (possibly non-terminating)
resolution
\begin{equation} \label{ntresol}
 \cdots\longrightarrow \bigoplus\mathscr{O}_{X\times
X}(-p_1,-p_1)\longrightarrow \bigoplus\mathscr{O}_{X\times
X}(-p_0,-p_0)\longrightarrow\mathscr{G}\otimes\mathscr{I}_{\Delta}
\longrightarrow 0
\end{equation}
for suitable integers $0\leqslant p_0\leqslant p_1\leqslant\cdots$
where again $(a,b)$ means the twisting
$\otimes\mathscr{O}_{X\times X}(aD\boxplus bE)$. Set $d=\dim
X\times X$. By dimension shifting, to prove $(\ref{vani})$ it is
enough to produce an integer $m_0$ such that
\begin{equation*}
H^i\big(X\times X, \mathscr{O}_{X\times
X}(a-p_{i-1},b-p_{i-1})\big)=0
\end{equation*}
whenever $a,b\geqslant m_0$ and $i=0,1,\ldots,d-1$. In fact, we
have then
\begin{align*}
H^1\big(X\times X,
\big(\mathscr{G}\otimes\mathscr{I}_{\Delta}\big)(a,b)\big)
&\cong H^2\big(X\times X, \mathscr{K}_0(a,b)\big) \\
&\quad\vdots\\
&\cong H^d\big(X\times X, \mathscr{K}_{d-2}(a,b)\big)\\
&\cong H^{d+1}\big(X\times X, \mathscr{K}_{d-1}(a,b)\big)=0
\end{align*}
where $\mathscr{K}_i$ is the kernel of the morphism
$$
\bigoplus\mathscr{O}_{X\times X}(-p_i,-p_i)\rightarrow
\bigoplus\mathscr{O}_{X\times X}(-p_{i-1},-p_{i-1})
$$
for $i > 0$ and $\mathscr{K}_0$ is the kernel of
$$
\bigoplus\mathscr{O}_{X\times
X}(-p_0,-p_0)\rightarrow\mathscr{G}\otimes\mathscr{I}_{\Delta}
\rightarrow 0.
$$
The last group vanishes by dimension reason. The existence of the
required integer $m_0$ then follows from Serre's vanishing
theorem.

For general coherent sheaves $\mathscr{F}_j$, we can write
$\mathscr{F}_j$ as a quotient of a sheaf $\mathscr{E}_j$ which is
a finite direct sum of sheaves of the form $\mathscr{O}_X({q_i})$.
We consider the following exact sequence
$$
0\longrightarrow\mathscr{K} \longrightarrow\mathscr{E}_1
\otimes \mathscr{E}_2 \longrightarrow
\mathscr{F}_1\otimes\mathscr{F}_2\longrightarrow 0.
$$
Choose a positive integer $m_0$ such that
\begin{enumerate}
\item $H^1(X, \mathscr{K}\otimes\mathscr{O}_X(aD+bE))$ vanishes
for $a,b\geqslant m_0$, and

\item the multiplication map
\begin{align*}
H^0\big(X,\mathscr{E}_1 \otimes \mathscr{O}_X(aD)\big)\otimes
H^0\big(X,\mathscr{E}_2 \otimes& \mathscr{O}_X(bE)\big)\rightarrow \\
&H^0\big(X,\mathscr{E}_1 \otimes \mathscr{E}_2 \otimes
\mathscr{O}_X(aD+bE)\big)
\end{align*}
is surjective whenever $a,b\geqslant m_0$.
\end{enumerate}
Consider the commutative diagram \small
$$
\xymatrix{ {H^0\big(X,\mathscr{E}_1 \otimes
\mathscr{O}_X(aD)\big)\otimes H^0\big(X,\mathscr{E}_2 \otimes
\mathscr{O}_X(bE)\big)} \ar[r] \ar[d] &
{H^0\big(X,\mathscr{E}_1\otimes\mathscr{E}_2 \otimes
\mathscr{O}_X(aD+bE)\big)}
\ar[d]\\
{H^0\big(X,\mathscr{F}_1 \otimes \mathscr{O}_X(aD)\big)\otimes
H^0\big(X,\mathscr{F}_2 \otimes \mathscr{O}_X(bE)\big)} \ar[r] &
{H^0\big(X,\mathscr{F}_1 \otimes \mathscr{F}_2 \otimes
\mathscr{O}_X(aD+bE)\big)} }
$$ \normalsize
If $a,b\geqslant m_0$, the right vertical map is surjective by
$(1)$, and the upper horizontal map is surjective by $(2)$. So the
lower horizontal multiplication map is surjective for
$a,b\geqslant m_0$. This completes the proof.
\end{proof}

\begin{remark}
In the case $X$ being smooth the resolution $(\ref{ntresol})$ is
actually finite by the Hilbert Syzygy Theorem.
\end{remark}

\end{document}